\def\NN{\mathbb{N}}
\def\RR{\mathbb{R}}
\def\d{\mathrm{d}}
\newcommand{\ddiv}{\operatorname{div}}
\newcommand*{\V}{{\bf v}}
\newcommand*{\Sig}{{\boldsymbol \sigma}}
\newcommand*{\Etl}{{\boldsymbol \eta}_l}
\newcommand*{\tsigl}{\tau_{\Sig,l}}
\newcommand*{\Div}{\ensuremath{\mathrm{\,div\,}}}
\newcommand*{\f}{{\bf f}}
\newcommand*{\Id}{{\bf I}}
\newcommand*{\RM}{\ensuremath{\RR^{3\times 3}}}
\newcommand*{\RS}{\ensuremath{\RR^{3\times 3}_{\rm sym}}}
\newcommand*{\pr}{\text{\tiny P}}
\newcommand*{\sh}{\text{\tiny S}}
\newcommand*{\red}{\color{black}}
\newcommand{\trace}{\operatorname{tr}}
\def\vec#1{\ensuremath{\mathchoice
                     {\mbox{\boldmath$\displaystyle\mathbf{#1}$}}
                     {\mbox{\boldmath$\textstyle\mathbf{#1}$}}
                     {\mbox{\boldmath$\scriptstyle\mathbf{#1}$}}
                     {\mbox{\boldmath$\scriptscriptstyle\mathbf{#1}$}}}}
\definecolor{KITgreen}{cmyk}{    1,    0, 0.6,  0} 
\newtheorem{theorem}{Theorem}[section]
\newtheorem{lemma}[theorem]{Lemma}
\newtheorem{remark}[theorem]{Remark}
\begin{document} 
\title[Corrigendum to "Higher order differentiability for viscoelasticity"]{Corrigendum to "Inverse Problems for\\ abstract evolution equations~\rm{\bf II}:\\ \bfseries  higher order differentiability for viscoelasticity"}
\begin{abstract}
In  our paper \href{https://doi.org/10.1137/19M1269403}{[SIAM J.\ Appl.~Math.\ 79-6 (2019)]} we considered full waveform inversion 
(FWI) in the viscoelastic regime.  FWI entails the nonlinear  inverse problem of recovering  parameter functions of the 
viscoelastic wave equation from partial measurements of reflected wave fields. We have obtained explicit analytic expressions 
for the first and second order Fr\'echet derivatives and their adjoints (adjoint wave equations) of the underlying parameter-to-
solution map.

In the present manuscript we correct an error which occurred in a basic result and thus affected several other statements,
which we have also corrected
\end{abstract}
\author{Andreas Kirsch \and Andreas Rieder}
\thanks{Funded by the Deutsche Forschungsgemeinschaft (DFG, German Research Foundation)
– Project-ID 258734477 – SFB 1173.}
\address{Department of Mathematics, Karlsruhe Institute of Technology (KIT),
D-76128 Karls\-ruhe, Germany}
\email{andreas.kirsch@kit.edu, andreas.rieder@kit.edu}
\date{\today}
\keywords{full waveform seismic inversion, viscoelastic wave equation, adjoint state method, nonlinear inverse and ill-posed problem, higher order Fr\'echet derivative}
\subjclass[2000]{35F10, 35R30, 86A22}
\maketitle
\section{Introduction}
This corrigendum to our article \href{https://doi.org/10.1137/19M1269403}{[SIAM J.\ Appl.~Math.\ 79-6 (2019)]}  is necessary because of an  error in eqn.~\eqref{*} on page~\pageref{*} where we put the wrong right hand side in the original paper.\footnote{We thank Christian Rheinbay  for pointing this out to us.} 
The corrections concern the assertion and the proof of Theorem~\ref{th:second}, especially 
eqn.~\eqref{evolution3}, and the
proof of Theorem~\ref{th:Lipschitz}. As a consequence, Theorems~\ref{th:FWI_second}, \ref{th:adjoint2},  \ref{FWI''_adjoint1},
and \ref{2D_FWI''_adjoint1} had to be revised considerably.
For the sake of completeness and self-containedness we repeat here also those  parts of the original which have not been 
contaminated. For the general background and the motivation of our work see the Introduction of the original article.

We  apologize for any inconvenience our flaw may cause.
\section{Viscoelasticity} 
The viscoelastic wave equation in the
velocity stress formulation based on the generalized standard linear
solid (GSLS) rheology reads: In a Lipschitz domain $D\subset \RR^3$ we
determine the velocity field $ \vec{v} \colon [0,T]\times
D\to\RR^{3}$, the stress tensor $\vec\sigma\colon [0,T]\times
D\to\RR^{3\times 3}_\text{sym}$, and memory tensors
$\vec\eta_l\colon [0,T]\times D\to\RR^{3\times 3}_\text{sym}$,
\ $l=1,\dotsc,L$, from the first-order system
 \begin{subequations}
      \label{C2:elawave}
    \begin{align}
      \rho\,\partial_t \V &  = \Div\Sig+\f & & \text{in }[0,T]\times D,
        \label{C2:elawave1}\\
        \partial_t \Sig &
        = C\big((1+L\tau_\sh)\mu_{0},(1+L\tau_\pr)\pi_{0}  \big)\,\varepsilon(\V)
        +\sum_{l=1}^L \Etl &  &\text{in }[0,T]\times D,\label{C2:elawave2}\\
          -\tsigl\partial_t \Etl &
          = C\big(\tau_\sh\mu_0, \tau_\pr\pi_0\big)\,\varepsilon(\V) +\Etl,\qquad\qquad l=1,\dots,L,&
          &\text{in }[0,T]\times D.
            \label{C2:elawave3}
    \end{align}
    \end{subequations}
Here, $\f$ denotes the external volume force density and $\rho$ is the mass density.
The linear maps $C(m,p)$ for $m,p\in\RR$ are defined as
    \begin{equation} \label{C}
      C(m,p)\colon\RM \to \RM,\qquad
      C(m,p){\vec M}
      =2m\vec M + (p-2m) \trace(\vec M)\Id,
    \end{equation}
{\red with  $\Id\in \RM $ being the identity matrix and $\trace(\vec M)$ denotes the trace of  $\vec M\in \RM $.} Further, 
$$
    \vec \varepsilon( \vec{v}) =
    \frac{1}{2}\big[(\nabla_x \vec{v})^\top+\nabla_x  \vec{v}\big]
    $$
is the linearized strain rate.  In formulation \eqref{C2:elawave} two
independent GSLS are used to describe the propagation of pressure and shear waves (P- and
S-waves). The parameters $ \mu_{0}$ and $\pi_0$
denote the relaxed
P- and S-wave modulus, respectively. Further,   $\tau_{\pr}$ and $\tau_{\sh}$ are scaling factors for
the relaxed moduli.  They have been introduced for the first time by
\cite{Blanch1995} and are now widely used to quantify attenuation
and phase velocity dispersion in viscoelastic media, see e.g. \cite{Fichtner2011,Yang:2016a}.

Wave propagation in viscoelastic media is frequency-dependent over a bounded frequency band with center frequency $\omega_0$. Within this band the Q-factor, which is the rate of the full energy over the dissipated energy, remains nearly constant. 
This fact is used to determine the stress relaxation times $\tsigl>0$  by a least-squares approach~\cite{bohlen98,bohlen2002} where
up to $L=5$ relaxation mechanisms may be required. Now we obtain the following frequency-dependent 
phase velocities of P- and S-waves:
      \begin{equation}
        \label{V_sp}
        v^{2}_{\pr} =\frac{\pi_{0}}\rho(1+ \tau_{\pr} \alpha ) 
        \quad\text{and}\quad  
        v^{2}_{\sh} =\frac{\mu_{0}}\rho(1+ \tau_{\sh} \alpha)\quad \text{with } \alpha =
         \alpha(\omega_0) =\sum_{l=1}^{L} \frac{\omega_0^{2}\tsigl^{2}}{1+\omega_0^{2}\tsigl^{2}}.
      \end{equation} 
Full waveform inversion (FWI) in seismic imaging entails the inverse problem of reconstructing  the five spatially dependent parameters 
$ (\rho,v_{\sh}, \tau_{\sh}, v_{\pr},\tau_{\pr})$ from wave field measurements.

Using the transformation 
$$
\begin{pmatrix}
\vec v\\
\vec \sigma_0\\
\vec \sigma_1\\
\vdots\\
\vec \sigma_L
\end{pmatrix}:=
\begin{pmatrix}
\vec v\\
\vec \sigma+\sum_{l=1}^{L}\tsigl\vec\eta_l\\
-\tau_{\Sig,1} \vec\eta_1\\
\vdots\\
-\tau_{\Sig,L} \vec\eta_1
\end{pmatrix}
$$
discovered and explored by  Zeltmann~\cite{zeltmann_phd} we reformulate \eqref{C2:elawave} equivalently into
\begin{subequations}
      \label{C2:elawave_trans}
    \begin{align}
      \partial_t \V &  = \frac{1}{\rho}\Div \Big(\sum_{l=0}^{L}\vec\sigma_l\Big)+\frac{1}{\rho}\,\f & & \text{in }[0,T]\times D,
        \label{C2:elawave1_trans}\\
        \partial_t \Sig_0 &
        = C\big(\mu_{0},\pi_{0}  \big)\,\varepsilon(\V)
       &  &\text{in }[0,T]\times D
          ,\label{C2:elawave2_trans}\\
          \partial_t \Sig_l &
          = C\big(\tau_\sh\mu_0, \tau_\pr\pi_0\big)\,\varepsilon(\V) -\frac{1}{\tsigl}\,\Sig_l,\qquad l=1,\dots,L,&
          &\text{in }[0,T]\times D.
            \label{C2:elawave3_trans}
    \end{align}
    \end{subequations}
Let $X=L^2(D,\RR^3)\times L^2(D,\RS)^{1+L}$. For suitable\footnote{A rigorous mathematical formulation will be given in 
Section~\ref{sec:application} below.} $w=(\vec w,\vec\psi_0,\dotsc,\vec \psi_L)\in X$  we define
the operators $A$, $B$, and $Q$ mapping into $X$ by
\begin{equation}\label{the_operators}
Aw=-\begin{pmatrix}
\Div \big(\sum_{l=0}^{L}\vec\psi_l\big) \\[1mm] \varepsilon(\vec w)\\[1mm] \vdots\\[1mm] \varepsilon(\vec w)\end{pmatrix},\ 
B^{-1}w=\begin{pmatrix}
\frac{1}{\rho}\,\vec w\\[1mm]
C\big(\mu_{0},\pi_{0}  \big)\vec\psi_0\\[1mm]
C\big(\tau_\sh\mu_0, \tau_\pr\pi_0\big)\vec\psi_1\\[1mm]
\vdots\\[1mm]
C\big(\tau_\sh\mu_0, \tau_\pr\pi_0\big)\vec\psi_L
\end{pmatrix},\ 
Qw=\begin{pmatrix}
\vec 0\\[1mm]
\vec 0\\[1mm]
\frac{1}{\tau_{\Sig,1}}\,\vec\psi_1\\[1mm]
\vdots\\[1mm]
\frac{1}{\tau_{\Sig,L}}\,\vec\psi_L
\end{pmatrix}.
\end{equation}
With these operators the system \eqref{C2:elawave_trans} can be rewritten as
$$
Bu'(t) +Au(t) + BQu(t) =f(t)
$$
where $u=(\V,\Sig_0,\dotsc,\Sig_L)$ and $f=(\vec f,\vec 0,\dotsc,\vec 0)$. 

Please note: The five parameters  to  be reconstructed by FWI enter only the operator~$B$ via, see \eqref{V_sp},
\begin{equation}\label{V_p2}
 \pi_0 =\frac{ \rho \,v^{2}_{\pr}}{1+ \tau_{\pr} \alpha }
        \quad\text{and}\quad  
     \mu_0 =\frac{ \rho \,v^{2}_{\sh}}{1+ \tau_{\sh} \alpha }.
\end{equation}
\section{Abstract framework}\label{sec:abstract}
We consider an abstract evolution equation in a Hilbert space $X$ of the form 
\begin{equation}\label{evolution}
Bu'(t) +Au(t) + BQu(t) =f(t), \quad t\in \,] 0,T[,\quad u(0)=u_0,
\end{equation}
under the following general  hypotheses: $T>0$, $u_0\in X$,
\begin{itemize}
\item[]
$B$ belongs to the Banach space $\mathcal{L}^*(X)=\{ J\in \mathcal{L}(X): J^*=J\}$ and satisfies $\langle Bx,x\rangle_X= 
\langle x,Bx\rangle_X\ge \beta \Vert x\Vert_X^2$ for some $\beta>0$ and for all $x\in X$, \vspace{2mm}
\item[] $A\colon \mathsf{D}(A)\subset X\to X$ is a maximal monotone operator: $\langle Ax,x\rangle_X\ge 0$ for all $x\in   \mathsf{D}(A)$ and
$I+A\colon  \mathsf{D}(A) \to X$ is onto ($I$ is the identity),\vspace{2mm}
\item[]
 $Q\in \mathcal{L}(X)$, and $f\in L^1([0,T],X)$.
\end{itemize}
Later we will show that the three operators from \eqref{the_operators} are well defined and 
satisfy  our general hypotheses in a precise mathematical setting.  

In \cite{Kirsch-Rieder16} we explored \eqref{evolution} with $Q=0$.
Existence and regularity results of  this paper  apply correspondingly. 
Let us be more precise: equation \eqref{evolution} can be transformed equivalently in
$$
u'(t) +(B^{-1}A + Q)u(t) =B^{-1}f(t), \quad t\in \,] 0,T[,\quad u(0)=u_0,
$$
where $B^{-1}A $ with $\mathsf{D}( B^{-1}A)=\mathsf{D}(A)$ generates a contraction semigroup on 
$(X,\langle\cdot,\cdot\rangle_B)$ with weighted inner product $\langle\cdot,\cdot\rangle_B:=\langle B\cdot,\cdot\rangle_X$ 
where the  induced norm $\Vert \cdot\Vert_B$ is equivalent to the original norm on $X$. Further, $B^{-1}A + Q$ is the infinitesimal 
generator of  a $C_0$-semigroup $\{S(t)\}_{t\ge 0}$ with
$$
\Vert S(t)\Vert_B\le \exp (\Vert Q\Vert_B t),
$$
see, e.g., Theorem~3.1.1  of \cite{pazy_semi}. Thus, \eqref{evolution} has a unique mild/weak solution in $\mathcal{C}([0,T],X)$  
given by
$$
u(t)=S(t)u_0+\int_0^t S(t-s) B^{-1}f(s)\,\d s.
$$
{\red
On the basis of the above comments, both Theorems~2.4 and 2.6 of \cite{Kirsch-Rieder16}} carry over to \eqref{evolution} when  we replace $f$  by $B^{-1}f$ and compensate  the use of $\Vert\cdot\Vert_X$ by an additional constant depending on $\Vert B\Vert$, $\Vert B^{-1}\Vert$, $\Vert Q\Vert$, and~$T$. Thus, we have the continuous dependence of $u$ on the data:
\begin{equation}\label{cont_dep_on_data}
\Vert u\Vert_{\mathcal{C}([0,T],X)}\lesssim \Vert u_0\Vert_X+\Vert f\Vert_{ L^1([0,T],X)}\footnote{$A\lesssim B$ indicates the existence of a generic constant $c>0$ such that $A\le c \,B$.}
\end{equation}
{\red
as well as the  following regularity result which has been 
shown in \cite[Theorem~2.6]{Kirsch-Rieder16} for $Q=0$ under more general assumptions on $f$ and~$u_0$.
\begin{theorem} \label{t-regul_2}
For some $k\in\NN$, 
let $f\in W^{k,1}([0,T],X)$ with $f^{(\ell)}(0)=0$, $\ell=0,\ldots,k-1$ (note that 
$f^{(\ell)}$ is continuous). Let $B\in\mathsf{D}(F)$ and let 
$u$ be the unique mild solution of 
\eqref{evolution} with $u_0=0$.
Then $u\in \mathcal{C}^k([0,T],X)\cap \mathcal{C}^{k-1}([0,T],\mathsf{D}(A))$ and
\begin{equation} \label{reg}
\Vert u\Vert_{\mathcal{C}^k([0,T],X)}\lesssim \Vert f\Vert_{W^{k,1}([0,T],X)}
\end{equation}
where the constant depends on $T$, $Q$, $\beta_-$, and $\beta_+$.
\end{theorem}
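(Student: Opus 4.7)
The plan is to reduce this regularity result to the case $Q=0$ of \cite[Theorem~2.6]{Kirsch-Rieder16} by a differentiation-and-induction argument on the Duhamel representation discussed just above. As preliminary setup, rewrite \eqref{evolution} as $u'(t)+(B^{-1}A+Q)u(t)=B^{-1}f(t)$ with $u(0)=0$, so the unique mild solution is $u(t)=\int_0^t S(t-s)B^{-1}f(s)\,\d s$ with the $C_0$-semigroup $\{S(t)\}_{t\ge 0}$ generated by $-(B^{-1}A+Q)$. Because of the Sobolev embedding $W^{1,1}([0,T],X)\hookrightarrow \mathcal{C}([0,T],X)$, all derivatives $f^{(\ell)}$ for $\ell\le k-1$ have continuous representatives, so the compatibility conditions $f^{(\ell)}(0)=0$ are meaningful.

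The core step is the following one-shot regularity bootstrap: if $f\in W^{1,1}([0,T],X)$ with $f(0)=0$, then $u\in \mathcal{C}^1([0,T],X)$ and $u'$ is itself the mild solution of \eqref{evolution} with forcing $f'$ and zero initial data. To see this, apply the substitution $s\mapsto t-s$ to get $u(t)=\int_0^t S(s)B^{-1}f(t-s)\,\d s$, differentiate formally under the integral, and use $f(0)=0$ to kill the boundary term, obtaining $u'(t)=\int_0^t S(t-s)B^{-1}f'(s)\,\d s$. Iterating this claim $k$ times shows that $u^{(j)}$ is the mild solution with forcing $f^{(j)}$ for every $0\le j\le k$. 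Feeding $u^{(j)}$ into the continuous dependence inequality \eqref{cont_dep_on_data} and summing over $j$ yields \eqref{reg}, with the implicit constant absorbing $\|B\|,\|B^{-1}\|,\|Q\|,T$ exactly as advertised in the text.

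For the sharper statement $u\in \mathcal{C}^{k-1}([0,T],\mathsf{D}(A))$, observe that $u^{(k-1)}$ is a $\mathcal{C}^1$ mild solution of an inhomogeneous abstract Cauchy problem whose forcing $B^{-1}f^{(k-1)}$ is continuous. The standard semigroup argument (see \cite[Cor.~4.2.5]{pazy_semi}) upgrades such mild solutions to classical ones, so $u^{(k-1)}(t)\in \mathsf{D}(B^{-1}A+Q)=\mathsf{D}(A)$ (the last equality using boundedness of $Q$) and $Au^{(k-1)}$ is continuous, completing the claim.

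The main obstacle is the rigorous justification of the differentiation under the integral in the bootstrap step at the low regularity $W^{1,1}$, where pointwise derivatives need not exist. The cleanest remedy is a density argument: approximate $f$ by smooth $f_n\in \mathcal{C}^\infty([0,T],X)$ satisfying the same compatibility conditions $f_n^{(\ell)}(0)=0$, for which the computation above is purely classical, and pass to the limit using \eqref{cont_dep_on_data} applied to $f-f_n$ and to $f'-f_n'$. Everything else is routine bookkeeping already essentially contained in the proof of \cite[Theorem~2.6]{Kirsch-Rieder16}, modulo replacing the contractivity bound by $\|S(t)\|_B\le \exp(\|Q\|_B t)$ and tracking the ensuing extra constants.
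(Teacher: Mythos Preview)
The paper does not supply its own proof of this theorem; it merely records it as the direct carryover of \cite[Theorem~2.6]{Kirsch-Rieder16} to the perturbed generator $B^{-1}A+Q$, with the constants adjusted as explained in the paragraph immediately preceding the statement. Your proposal is a correct, self-contained unfolding of exactly that argument---the Duhamel bootstrap $u^{(j)}(t)=\int_0^tS(t-s)B^{-1}f^{(j)}(s)\,\d s$ combined with \eqref{cont_dep_on_data} is precisely the mechanism behind the cited result---so there is nothing substantive to contrast; the only cosmetic gap is that for the full conclusion $u\in\mathcal{C}^{k-1}([0,T],\mathsf{D}(A))$ your upgrade-to-classical step must be applied to every $u^{(j)}$ with $0\le j\le k-1$ (not just $j=k-1$), after which the identity $Au^{(j)}=-Bu^{(j+1)}-BQu^{(j)}+f^{(j)}$ shows that $t\mapsto Au^{(j)}(t)$ is $\mathcal C^1$ for $j\le k-2$ and hence that the $\mathsf{D}(A)$-valued derivatives exist and coincide with the $X$-valued ones.
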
 }
\subsection{Abstract parameter-to-solution map}
We define the following parameter-to-solution map related to \eqref{evolution}: 
\begin{equation}\label{F}
F\colon \mathsf{D}(F)\subset \mathcal{L}^*(X)\to  \mathcal{C}([0,T],X),\quad B\mapsto u,
\end{equation}
where 
$$
\mathsf{D}(F)=\{B\in\mathcal{L}^*(X): \beta_-\Vert x\Vert_X^2\le \langle Bx,x\rangle_X\le\beta_+ \Vert x\Vert_X^2\} 
$$
for given $0< \beta_-<\beta_+<\infty$. 

Transferring the techniques of proof of \cite[Theorem~3.6]{Kirsch-Rieder16} straightforwardly to $F$ yields the following result.
\begin{theorem}\label{th:first}
Let $T>0$, $f\in W^{1,1}\big([0,T],X\big)$, and $u_0\in\mathsf{D}(A)$. Then, the mild solution of \eqref{evolution} is a classical solution, i.e., 
${u}\in \mathcal{C}^1\bigl([0,T],X\bigr)\cap \mathcal{C}\bigl([0,T],\mathsf{D}(A)\bigr)$, and 
$F$ is Fr\'{e}chet dif\-fer\-en\-ti\-able at
${B}\in\operatorname{int}(\mathsf{D}(F))$ with $F^\prime({B})H=\overline{u}$, $H\in  \mathcal{L}^*(X)$,  where $\overline{u}\in
\mathcal{C}\bigl([0,T],X\bigr)$ is the mild  solution of
\begin{equation} \label{evolution2}
{B}\overline{u}^\prime(t)+A\overline{u}(t)+BQ\overline{u}(t)=  -H({u}^\prime(t)+Qu(t)),\ t\in\,[0,T],\quad\overline{u}(0)=0.
\end{equation}
\end{theorem}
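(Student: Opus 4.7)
The plan is to follow the perturb-and-compare strategy of \cite[Thm.~3.6]{Kirsch-Rieder16}. The first assertion, that the mild solution is classical, follows from standard semigroup theory applied to the equivalent formulation $u' + (B^{-1}A + Q) u = B^{-1} f$: the $C_0$-semigroup structure recalled in the discussion preceding Theorem~\ref{t-regul_2}, together with $u_0 \in \mathsf{D}(A) = \mathsf{D}(B^{-1}A + Q)$ and $B^{-1} f \in W^{1,1}([0,T], X)$, implies $u \in \mathcal{C}^1([0,T], X) \cap \mathcal{C}([0,T], \mathsf{D}(A))$ by the classical-solutions theorem for inhomogeneous abstract Cauchy problems.

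For the Fr\'echet differentiability, I would pick $H \in \mathcal{L}^*(X)$ small enough that $B + H \in \mathsf{D}(F)$, set $u_H := F(B+H)$ (classical by the first assertion), and form $w_H := u_H - u$ and $z := w_H - \overline{u}$. Subtracting the governing equation for $u$ from that for $u_H$ yields that $w_H$ is the mild solution of
\begin{equation*}
(B+H)\, w_H'(t) + A\, w_H(t) + (B+H)\, Q\, w_H(t) = -H\bigl(u'(t) + Q u(t)\bigr), \qquad w_H(0) = 0,
\end{equation*}
and a further subtraction against \eqref{evolution2} gives, formally,
\begin{equation*}
(B+H)\, z'(t) + A\, z(t) + (B+H)\, Q\, z(t) = -H\bigl(\overline{u}'(t) + Q \overline{u}(t)\bigr), \qquad z(0) = 0.
\end{equation*}
Applying \eqref{cont_dep_on_data} to this equation (with coefficient $B+H$) gives $\|z\|_{\mathcal{C}([0,T], X)} \lesssim \|H\| \bigl(\|\overline{u}'\|_{L^1([0,T], X)} + \|\overline{u}\|_{\mathcal{C}([0,T], X)}\bigr)$; since $\overline{u}$ is a linear function of $H$ and \eqref{cont_dep_on_data} applied to \eqref{evolution2} gives $\|\overline{u}\|_{\mathcal{C}([0,T], X)} + \|\overline{u}'\|_{L^1([0,T], X)} \lesssim \|H(u' + Qu)\|_{L^1([0,T], X)} \lesssim \|H\|$, I obtain $\|z\|_{\mathcal{C}([0,T], X)} = O(\|H\|^2) = o(\|H\|)$, establishing the differentiability with $F'(B) H = \overline{u}$. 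Linearity and boundedness of the candidate derivative $H \mapsto \overline{u}$ follow directly from linearity of \eqref{evolution2} in $H$ and from \eqref{cont_dep_on_data}.

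The main technical obstacle is that the right-hand side $-H(u' + Qu)$ of \eqref{evolution2} lies only in $\mathcal{C}([0,T], X)$ and need not vanish at $t = 0$, so Theorem~\ref{t-regul_2} does not immediately yield that $\overline{u}$ is classical; consequently the use of $\overline{u}'$ and the bound $\|\overline{u}'\|_{L^1} \lesssim \|H\|$ above are a priori only formal. I would handle this by a density argument: approximate $f$ in $W^{1,1}([0,T], X)$ by $f_n \in \mathcal{C}^\infty([0,T], X)$ with $f_n(0) = f_n'(0) = 0$, so that the corresponding $\overline{u}_n$ become classical by Theorem~\ref{t-regul_2} and the calculation above is rigorous at the approximation level. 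The passage to the limit $n \to \infty$ then uses \eqref{cont_dep_on_data} once more to control $u - u_n$, $u_H - u_{H,n}$, and $\overline{u} - \overline{u}_n$ uniformly in $\|H\|$, transferring the $o(\|H\|)$ bound on $z$ to the original problem.
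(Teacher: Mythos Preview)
Your overall strategy is exactly the one the paper indicates: the paper does not give its own proof of this theorem but refers the reader to \cite[Theorem~3.6]{Kirsch-Rieder16}, whose perturb-and-compare scheme you are following. The derivations of the equations for $w_H$ and $z$ are correct, and you rightly isolate the control of $\overline{u}'$ as the essential difficulty.

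The density argument you propose, however, has a genuine gap. You cannot approximate an arbitrary $f\in W^{1,1}([0,T],X)$ in the $W^{1,1}$-norm by functions $f_n$ with $f_n(0)=0$, because the trace at $t=0$ is continuous on $W^{1,1}$ and the theorem does not assume $f(0)=0$. Moreover, Theorem~\ref{t-regul_2} (which you invoke to make $\overline{u}_n$ classical via $u_n\in\mathcal{C}^2$) requires $u_0=0$, whereas here only $u_0\in\mathsf{D}(A)$ is assumed; hence neither $u_n\in\mathcal{C}^2$ nor the classicality of $\overline{u}_n$ follows. A way around this is to abandon the $(B+H)$-form of the $z$-equation in favour of $Bz'+Az+BQz=-H(w_H'+Qw_H)$, whose right-hand side is well defined because $w_H=u_H-u$, being a difference of two classical solutions, lies in $\mathcal{C}^1([0,T],X)$. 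Then \eqref{cont_dep_on_data} yields $\|z\|_{\mathcal{C}}\lesssim\|H\|\,\|w_H'+Qw_H\|_{L^1}$, and the proof is completed by showing $\|w_H'\|_{\mathcal{C}}\to 0$ as $H\to 0$: since $u'$ and $u_H'$ are themselves mild solutions of the time-differentiated problem with source $f'\in L^1$ and initial data $B^{-1}(f(0)-Au_0)-Qu_0$ resp.\ $(B+H)^{-1}(f(0)-Au_0)-Qu_0$, this follows from strong convergence of the perturbed semigroups together with dominated convergence.
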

The representation of the adjoint of the Fr\'echet derivative carries over as well, see \cite[Theorem~3.8]{Kirsch-Rieder16}.
\begin{theorem}\label{th:adjoint}
Under the notation and assumptions of Theorem~{\rm\ref{th:first}} we have
$$ \big[F^\prime({B})^\ast g\big]H= \int_0^T\big\langle H({u}^\prime(t)+Qu(t)),w(t)\big\rangle_X\d t,\quad g\in L^2([0,T],X),\
H\in\mathcal{L}^*(X), $$
where $w\in \mathcal{C}([0,T],X)$ is the mild solution of the {\red adjoint} evolution equation
\begin{equation}\label{evolution_back}
{B}w^\prime(t) - A^\ast w(t) -Q^*Bw(t)=g(t),\  t\in\,[0,T],\quad w(T)=0.
\end{equation}
\end{theorem}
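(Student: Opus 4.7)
The strategy is the standard adjoint-state computation: test the forward sensitivity equation \eqref{evolution2} against the adjoint state $w$, integrate by parts in $t$, and use the symmetry of $B$ together with $A^{\ast}$ to transfer derivatives. Concretely, I would first argue that the backward equation \eqref{evolution_back} has a unique mild solution $w \in \mathcal{C}([0,T],X)$: the substitution $\widetilde w(t) = w(T-t)$ turns \eqref{evolution_back} into a forward Cauchy problem of the same abstract type as \eqref{evolution}, with $A$ replaced by $A^{\ast}$ (which is also maximal monotone on $X$) and $Q$ replaced by $-Q^{\ast}$, so the semigroup theory quoted from \cite{pazy_semi} and the estimate \eqref{cont_dep_on_data} apply verbatim. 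This supplies $w$ and the continuity in $g$.

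Next, for fixed $H \in \mathcal{L}^{\ast}(X)$ and $g \in L^{2}([0,T],X)$, set $\overline u = F'(B)H$ from Theorem~\ref{th:first} and compute
\begin{align*}
\int_0^T \langle \overline u(t), g(t)\rangle_X \,\d t
&= \int_0^T \langle \overline u(t),\, Bw'(t) - A^{\ast} w(t) - Q^{\ast}B w(t)\rangle_X \,\d t \\
&= \int_0^T \Bigl[\langle B\overline u(t),w'(t)\rangle_X - \langle A\overline u(t),w(t)\rangle_X - \langle BQ\overline u(t),w(t)\rangle_X\Bigr]\d t,
\end{align*}
using that $B = B^{\ast}$ and $\langle A\overline u,w\rangle_X = \langle \overline u,A^{\ast}w\rangle_X$, $\langle BQ\overline u,w\rangle_X = \langle \overline u,Q^{\ast}Bw\rangle_X$. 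Integration by parts in the first term produces the boundary contribution $[\langle B\overline u,w\rangle_X]_{0}^{T}$, which vanishes because $\overline u(0)=0$ and $w(T)=0$. Substituting the sensitivity equation \eqref{evolution2} yields
\begin{equation*}
\int_0^T \langle \overline u(t), g(t)\rangle_X \,\d t
= -\int_0^T \langle B\overline u'(t)+A\overline u(t)+BQ\overline u(t),\,w(t)\rangle_X \,\d t
= \int_0^T \langle H(u'(t)+Qu(t)),\,w(t)\rangle_X \,\d t,
\end{equation*}
which is the claimed identity for $[F'(B)^{\ast}g]H$.

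The main obstacle is that mild solutions are not a priori regular enough to support the integration by parts or to apply $A$ pointwise. I would handle this by a density argument: approximate $g \in L^{2}([0,T],X)$ and $f \in W^{1,1}([0,T],X)$, $u_0 \in \mathsf{D}(A)$ by smoother data (so that by Theorem~\ref{t-regul_2} both $\overline u$ and $w$ become classical solutions with values in $\mathsf{D}(A)$ resp.\ $\mathsf{D}(A^{\ast})$), carry out the computation above rigorously in that setting, and then pass to the limit using the continuous dependence \eqref{cont_dep_on_data} for both the direct and the time-reversed adjoint problem. Boundedness of $H$ and of the evaluation $u \mapsto u'+Qu$ in the appropriate norms then transfers the identity to the general case, concluding the proof.
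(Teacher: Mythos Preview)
Your proposal is correct and follows the same adjoint-state computation as the paper, which refers to \cite[Theorem~3.8]{Kirsch-Rieder16} for the proof and carries out the identical manipulation explicitly in the proof of Theorem~\ref{th:adjoint2} (density in $g$ via $W^{1,1}([0,T],X)$, integration by parts using $B=B^*$, and the boundary terms vanishing from $\overline u(0)=0$, $w(T)=0$). One small correction: the time reversal $\widetilde w(t)=w(T-t)$ gives $B\widetilde w' + A^*\widetilde w + Q^*B\widetilde w = -\widetilde g$ (cf.\ the remark following Theorem~\ref{th:adjoint}), so the lower-order perturbation is $B^{-1}Q^*B$ rather than $-Q^*$; this is still a bounded operator, so your well-posedness argument for the adjoint equation goes through unchanged.
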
 
\begin{remark}{\red Setting $\widetilde{w}(t)=w(T-t)$ and $\widetilde{g}(t)=g(T-t)$ we rewrite \eqref{evolution_back}
as initial value problem 
$$
B\widetilde{w}^\prime(t) + A^\ast \widetilde{w}(t) +Q^*B\widetilde{w}(t)=-\widetilde{g}(t),\  t\in\,[0,T],\quad \widetilde{w}(0)=0,
$$
which is of the same structure as our original equation \eqref{evolution} since $A^*$ is maximal monotone as well. 
Further, in our concrete setting of the viscoelastic wave equation we have $A^*=-A$ (see the next section) so that basically the same numerical solver can be used for the state and the adjoint state equation. 

This remark applies also to the situation of Theorem~{\rm\ref{th:adjoint2}} below.
}\end{remark}
Next we investigate  second order differentiability of $F$.
\begin{theorem}\label{th:second}
Let $f\in W^{3,1}([0,T],X)$,  $u_0=0$, and  $f(0)=f'(0)=f''(0)=0$.
Then,  $F$ is twice Fr\'echet differentiable  at 
$B\in  \operatorname{int}(\mathsf{D}(F))$ with $F''(B)[H_1,H_2]=\overline{\overline{u}}$, $H_i\in \mathcal{L}^*(X)$, $i=1,2$, where $\overline{\overline{u}}\in  \mathcal{C}([0,T],X)$  is the mild (in fact the classical) solution of 
\begin{equation}\label{evolution3}
B\overline{\overline{u}}\,'(t)+(A+BQ)\overline{\overline{u}}(t)=-H_1({u}_2'(t)+Q{u}_2(t))
-H_2({u}_1'(t)+Q{u}_1(t)), \quad \overline{\overline{u}}(0)=0.
\end{equation}
Here, $u_i\in \mathcal{C}^2([0,T],X)\cap \mathcal{C}^1([0,T],\mathsf{D}(A))$, $i=1,2$, is the classical solution of 
\eqref{evolution2} with $H$ replaced by $H_i$: 
\begin{equation}\label{evolution4}
B{u}_i'(t)+(A+BQ){u}_i(t)=-H_i(u'(t)+Qu(t)), \quad {u}_i(0)=0, \quad i=1,2.
\end{equation}
Further, $u \in \mathcal{C}^3([0,T],X)\cap \mathcal{C}^2([0,T],\mathsf{D}(A))$ solves \eqref{evolution}.
\end{theorem}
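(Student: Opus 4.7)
My plan is to show that the map $B\mapsto F'(B)H_1$ is Fr\'echet differentiable at $B\in\operatorname{int}(\mathsf{D}(F))$ with derivative $H_2\mapsto\overline{\overline{u}}$, by comparing $F'(B+H_2)H_1-F'(B)H_1$ with the candidate $\overline{\overline{u}}$ defined in \eqref{evolution3} and bounding the remainder by $O(\|H_1\|\,\|H_2\|^2)$. Throughout, I restrict to $\|H_2\|$ small enough that $B+H_2\in\mathsf{D}(F)$, so that Theorems~\ref{t-regul_2} and~\ref{th:first} and the stability estimate \eqref{cont_dep_on_data} apply at $B+H_2$ with constants uniform on a ball around $B$. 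Bilinearity of $F''(B)[H_1,H_2]=\overline{\overline{u}}$ then follows at once from the linearity of \eqref{evolution3} in its right-hand side.

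First I collect the regularity. Since $f\in W^{3,1}([0,T],X)$ with $f(0)=f'(0)=f''(0)=0$ and $u_0=0$, Theorem~\ref{t-regul_2} yields $u\in\mathcal{C}^3([0,T],X)\cap\mathcal{C}^2([0,T],\mathsf{D}(A))$; evaluating \eqref{evolution} and its time derivatives at $t=0$ gives $u^{(\ell)}(0)=0$ for $\ell=0,1,2,3$. Hence the right-hand side $-H_i(u'+Qu)$ of \eqref{evolution4} lies in $W^{2,1}([0,T],X)$ with value and first derivative vanishing at $t=0$, so \eqref{reg} with $k=2$ yields $u_i\in\mathcal{C}^2([0,T],X)\cap\mathcal{C}^1([0,T],\mathsf{D}(A))$ with $u_i(0)=u_i'(0)=0$ and $\|u_i\|_{\mathcal{C}^2}\lesssim\|H_i\|$. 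Applying \eqref{reg} with $k=1$ to \eqref{evolution3} then yields $\overline{\overline{u}}\in\mathcal{C}^1([0,T],X)\cap\mathcal{C}([0,T],\mathsf{D}(A))$ (so $\overline{\overline{u}}$ is classical) with $\|\overline{\overline{u}}\,\|_{\mathcal{C}^1}\lesssim\|H_1\|\,\|H_2\|$. The same hierarchy applied at $B+H_2$ provides $u^{B+H_2}:=F(B+H_2)\in\mathcal{C}^3$ and $\widehat{u}_1:=F'(B+H_2)H_1\in\mathcal{C}^2$ with norms bounded uniformly for small $H_2$.

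Next I estimate the differences. Setting $w_2:=u^{B+H_2}-u$ and subtracting the evolution equations at $B+H_2$ and $B$ yields
\[
(B+H_2)w_2'+Aw_2+(B+H_2)Qw_2=-H_2\bigl(u'+Qu\bigr),\qquad w_2(0)=0,
\]
and for $r:=w_2-u_2$ a further subtraction produces
\[
(B+H_2)r'+Ar+(B+H_2)Qr=-H_2\bigl(u_2'+Qu_2\bigr),\qquad r(0)=0.
\]
Since $u_2(0)=u_2'(0)=0$ and $\|u_2\|_{\mathcal{C}^2}\lesssim\|H_2\|$, estimate \eqref{reg} with $k=1$ at $B+H_2$ gives $\|r\|_{\mathcal{C}^1}\lesssim\|H_2\|^2$. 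Finally, writing $e:=\widehat{u}_1-u_1-\overline{\overline{u}}$ and combining the equations for $\widehat{u}_1$, $u_1$, $\overline{\overline{u}}$ via the identity $w_2=u_2+r$ gives, after cancellation,
\[
(B+H_2)e'+Ae+(B+H_2)Qe=-H_1\bigl(r'+Qr\bigr)-H_2\bigl(\overline{\overline{u}}\,'+Q\overline{\overline{u}}\bigr),\qquad e(0)=0.
\]
Applying \eqref{cont_dep_on_data} at $B+H_2$ and inserting the previous bounds on $r$ and $\overline{\overline{u}}$ produces $\|e\|_{\mathcal{C}([0,T],X)}\lesssim\|H_1\|\,\|H_2\|^2$, which is the desired remainder estimate.

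The main obstacle I anticipate is the careful bookkeeping of the vanishing conditions at $t=0$ and of the regularity required to invoke \eqref{reg} at each step: one must verify that the successive right-hand sides for $u_i$, $\overline{\overline{u}}$, $w_2$, $r$, and $e$ belong to the appropriate $W^{k,1}$-space with all derivatives of order below $k$ vanishing at the origin, which is precisely what $f\in W^{3,1}$ together with $f(0)=f'(0)=f''(0)=0$ provides after the initial-condition chain for $u$. A secondary delicate point is that the remainder equation for $e$ carries $B+H_2$ rather than $B$ on the left, so one has to exploit $B\in\operatorname{int}(\mathsf{D}(F))$ to ensure uniform constants in \eqref{reg} and \eqref{cont_dep_on_data} for all sufficiently small $H_2$.
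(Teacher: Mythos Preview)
Your proof is correct and follows essentially the same strategy as the paper, only with the roles of $H_1$ and $H_2$ interchanged: the paper perturbs $B$ by $H_1$ and takes the supremum over the test direction $H_2$, while you perturb by $H_2$; after the swap your auxiliary quantities $w_2$, $r$, $e$ correspond to the paper's $u_+-u$, $d=u_+-u-u_1$, $v=\widetilde u-u_2-\overline{\overline u}$, and your remainder equation is written at base point $B+H_2$ rather than $B$, but the regularity-and-stability cascade is identical. The one point you should make explicit is that your bound $\|e\|_{\mathcal C}\lesssim\|H_1\|\,\|H_2\|^2$ is \emph{uniform} in $H_1$, so that taking the supremum over $\|H_1\|\le 1$ gives differentiability of $F'$ in the operator norm and hence genuine second-order Fr\'echet differentiability of $F$; your estimates already deliver this uniformity, but your opening sentence (``the map $B\mapsto F'(B)H_1$ is Fr\'echet differentiable'') taken literally asserts only a pointwise-in-$H_1$ statement.
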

\begin{proof}
We need to show that
$$
\sup_{H_2\in \mathcal{L}^*(X)}\frac{\Vert F'(B+H_1)H_2-F'(B)H_2-F''(B)[H_1,H_2]\Vert_{\mathcal{C}([0,T],X)}}{\Vert H_1\Vert_{\mathcal{L}(X)}\Vert H_2\Vert_{\mathcal{L}(X)} }\xrightarrow{\,H_1\to 0\,} 0.
$$
Set $u_+:=F(B+H_1)$ and $\widetilde{u}:=F'(B+H_1)H_2$ which is well defined  for $H_1$ sufficiently small. We have 
\begin{align*}  
B{u}_i^\prime  +  (A+BQ)u_i & \,=  -H_i(u^\prime+Qu),\ i=1,2, \\
(B+H_1)\widetilde{u}\,^\prime  +  \bigl(A+(B+H_1)Q\bigr)\widetilde{u} & \,=  -H_2(u^\prime_++Qu_+), \tag{$\ast$}
\label{*}\\
B\overline{\overline{u}}\,^\prime  +  (A+BQ)\overline{\overline{u}} & \,= 
-H_1({u}_2'+Q{u}_2)-H_2({u}_1'+Q{u}_1).
\end{align*}
Then, $\widetilde{u}-u_2$ and $v:=\widetilde{u}-{u}_2-\overline{\overline{u}}$ satisfy
\begin{equation} \label{eq2}
B(\widetilde{u}-{u}_2)^\prime + (A+BQ)(\widetilde{u}-{u}_2)= -H_1(\widetilde{u}\,^\prime+Q\widetilde{u})
-H_2\big( (u_+-u)'+Q(u_+-u)\big)
\end{equation}
and
\begin{multline*} 
Bv^\prime + (A+BQ)v = -H_1\bigl[(\widetilde{u}-{u}_2)^\prime+Q(\widetilde{u}-{u}_2)\bigr] \\
-H_2\big( (u_+-u-u_1)'+Q(u_+-u-u_1)\big),
\end{multline*}
respectively, with homogeneous initial conditions. Using the continuous dependence of $v$ on the right hand side, see  \eqref{cont_dep_on_data},  we get
\begin{equation}\label{eq:aux1}
\Vert v\Vert_{\mathcal{C}([0,T],X)}
\lesssim \Vert H_1\Vert_{ \mathcal{L}(X)}\,\Vert \widetilde{u}-{u}_2\Vert_{\mathcal{C}^1([0,T],X)}
+\Vert H_2\Vert_{ \mathcal{L}(X)}\,\Vert u_+-u-u_1\Vert_{\mathcal{C}^1([0,T],X)}.
\end{equation}
Now we apply the regularity estimate \eqref{reg} repeatedly  for $k=1$ to $\widetilde{u}-{u}_2$  in \eqref{eq2}, 
then for $k=2$ to $\widetilde{u}$ as well as to $u_+-u$, which satisfies $B(u_+-u)'+(A+BQ)(u_+-u)=H_1 u_+'$, 
and finally for $k=3$ to $u$:
\begin{align*}
\Vert \widetilde{u}-{u}_2\Vert_{\mathcal{C}^1([0,T],X)} & \lesssim\Vert H_1\Vert_{\mathcal{L}(X)}\Vert\widetilde{u}
\Vert_{\mathcal{C}^2([0,T],X)} + \Vert H_2\Vert_{\mathcal{L}(X)}\Vert u_+-u\Vert_{\mathcal{C}^2([0,T],X)}\\[1mm]
&\lesssim \Vert H_1\Vert_{\mathcal{L}(X)}\Vert H_2\Vert_{\mathcal{L}(X)} \Vert u\Vert_{\mathcal{C}^3([0,T],X)} 
 \lesssim\Vert H_1\Vert_{\mathcal{L}(X)}\Vert H_2\Vert_{\mathcal{L}(X)}\Vert f\Vert_{W^{3,1}([0,T],X)}. 
\end{align*}
Let $d:=u_+-u-u_1$. Then, $Bd'+(A+BQ)d=-H_1((u_+-u)'+Q(u_+-u))$, $d(0)=0$. Again,  \eqref{reg}  gives
\begin{align*}
\Vert d\Vert_{\mathcal{C}^1([0,T],X)} &\lesssim \Vert H_1\Vert_{\mathcal{L}(X)} \Vert u_+-u\Vert_{\mathcal{C}^2([0,T],X)}
\lesssim  \Vert H_1\Vert_{\mathcal{L}(X)}^2 \Vert u\Vert_{\mathcal{C}^3([0,T],X)} \\[1mm]
&\lesssim  \Vert H_1\Vert_{\mathcal{L}(X)}^2  \Vert f\Vert_{W^{3,1}([0,T],X)}.
\end{align*}
Substituting the latter two bounds into \eqref{eq:aux1}  yields
$$
\frac{1}{\Vert H_1\Vert_{\mathcal{L}(X)}}\sup_{H_2\in \mathcal{L}(X)} \frac{\Vert \widetilde{u}-\overline{u}- \overline{\overline{u}}\Vert_{\mathcal{C}([0,T],X)}}{\Vert H_2\Vert_{\mathcal{L}(X)}} \lesssim 
\Vert H_1\Vert_{\mathcal{L}(X)} \Vert f\Vert_{W^{3,1}([0,T],X)}
$$
which finishes the proof. 
\end{proof}
\begin{remark}
In  seismic exploration, where \eqref{evolution} is the viscoacoustic or viscoelastic wave equation, we can 
assume the  environment to be at rest before firing the source.  In other words, the assumptions on $u_0$ and $f$ from the 
above theorem are justified.

The symmetry of second order Fr\'echet derivative, see, e.g., \cite[(8.12.2)]{dieudonne}, is clearly visible in \eqref{evolution3}.
\end{remark}
The mindful reader might have noticed an unbalanced increase of the smoothness assumptions on $f$ and $u_0$ from 
Theorem~{\rm\ref{th:first}}   ($f\in W^{1,1}$) to Theorem~{\rm\ref{th:second}} ($f\in W^{3,1}$)  compared to the increase of smoothness of $F$: two additional 
differentiation orders for $f$ gain only  one  order for $F$. This is because in \eqref{eq:aux1} we need  convergence of $\Vert 
\widetilde{u}-\overline{u}\Vert_{\mathcal{C}^1([0,T],X)}\to 0$ as $H_1\to 0$ {\em uniformly} in $H_2$. 
At least we get  $F\in\mathcal{C}^{2,1}$, that is,  $F''$ is  uniformly Lipschitz continuous.
\begin{theorem}\label{th:Lipschitz}
Under  the assumptions of Theorem~{\rm\ref{th:second}}  we have that\footnote{$\mathcal{L}^2(V, W)$ 
denotes the space of bounded bilinear mappings from $V$ to $W$.}
$$
\Vert F''(B)-F''(\widetilde{B})\Vert_{\mathcal{L}^2(\mathcal{L}^*(X),\mathcal{C}([0,T],X))} \lesssim
 \Vert B-\widetilde{B}\Vert_{\mathcal{L}(X)}
$$
uniformly in $\operatorname{int}(\mathsf{D}(F))$. The constant in the above estimate only depends on $\beta_-$, 
$\beta_+$, $T$, $Q$, and $f$.
\end{theorem}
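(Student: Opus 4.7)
The plan is to mimic the architecture of the proof of Theorem~\ref{th:second}, but now comparing the objects associated with $B$ to those associated with $\widetilde{B}$ instead of those associated with $B+H_1$ to those associated with $B$. The main ingredients are again the continuous dependence estimate \eqref{cont_dep_on_data} and the regularity bound \eqref{reg}, applied to carefully chosen evolution equations for differences.

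Let me introduce the obvious notation: for $\widetilde{B}\in\operatorname{int}(\mathsf{D}(F))$ write $\widetilde u=F(\widetilde B)$, let $\widetilde u_i$ solve \eqref{evolution4} with $B$ replaced by $\widetilde B$, and set $\widetilde{\overline{\overline u}}=F''(\widetilde B)[H_1,H_2]$; each of these exists with the same regularity as its counterpart at~$B$ by Theorem~\ref{th:second}. First I would derive the evolution equations for the differences
$$
\delta u := u-\widetilde u,\qquad \delta u_i:= u_i-\widetilde u_i,\qquad w:=\overline{\overline u}-\widetilde{\overline{\overline u}},
$$
by subtracting the respective equations after rewriting, e.g., $\widetilde B\widetilde u'+(A+\widetilde B Q)\widetilde u=f$ as
$B\widetilde u'+(A+BQ)\widetilde u=f+(B-\widetilde B)\widetilde u'+(B-\widetilde B)Q\widetilde u$. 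One checks directly (using $f(0)=f'(0)=f''(0)=0$) that $\widetilde u,\widetilde u_i,\widetilde{\overline{\overline u}}$ and all sufficiently many time derivatives vanish at $t=0$, so the right-hand sides of the difference equations satisfy the vanishing hypotheses required to apply \eqref{reg}.

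Next I would chain the estimates, starting from the innermost level and bootstrapping outward. The equation for $\delta u$ has right-hand side $-(B-\widetilde B)(\widetilde u'+Q\widetilde u)$; applying \eqref{reg} with $k=2$ and the bound $\|\widetilde u\|_{\mathcal{C}^3([0,T],X)}\lesssim\|f\|_{W^{3,1}([0,T],X)}$ gives $\|\delta u\|_{\mathcal{C}^2([0,T],X)}\lesssim\|B-\widetilde B\|_{\mathcal{L}(X)}$. Next, $\delta u_i$ satisfies an equation whose right-hand side is $-(B-\widetilde B)(\widetilde u_i'+Q\widetilde u_i)-H_i(\delta u'+Q\delta u)$, so \eqref{reg} with $k=1$ together with $\|\widetilde u_i\|_{\mathcal{C}^2}\lesssim\|H_i\|_{\mathcal{L}(X)}$ and the previous estimate yields
$\|\delta u_i\|_{\mathcal{C}^1([0,T],X)}\lesssim \|H_i\|_{\mathcal{L}(X)}\|B-\widetilde B\|_{\mathcal{L}(X)}$. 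In parallel, a single application of \eqref{reg} with $k=1$ to the equation for $\widetilde{\overline{\overline u}}$ produces $\|\widetilde{\overline{\overline u}}\|_{\mathcal{C}^1([0,T],X)}\lesssim\|H_1\|_{\mathcal{L}(X)}\|H_2\|_{\mathcal{L}(X)}$.

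Finally, $w$ solves
$$
Bw'+(A+BQ)w=-(B-\widetilde B)\bigl(\widetilde{\overline{\overline u}}\,'+Q\widetilde{\overline{\overline u}}\bigr)-H_1(\delta u_2'+Q\delta u_2)-H_2(\delta u_1'+Q\delta u_1),\quad w(0)=0.
$$
Applying the continuous-dependence estimate \eqref{cont_dep_on_data} and plugging in the three bounds above gives
$\|w\|_{\mathcal{C}([0,T],X)}\lesssim\|H_1\|_{\mathcal{L}(X)}\|H_2\|_{\mathcal{L}(X)}\|B-\widetilde B\|_{\mathcal{L}(X)}$, uniformly for $B,\widetilde B$ in $\operatorname{int}(\mathsf{D}(F))$, which is exactly the claimed Lipschitz estimate. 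The main obstacle, much as in Theorem~\ref{th:second}, is bookkeeping the regularity: every time a factor of $H_i$ or $B-\widetilde B$ is extracted from an equation, one pays one derivative, and this is what forces the $W^{3,1}$-hypothesis on $f$ to be sufficient but not wasteful here.
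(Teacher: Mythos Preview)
Your proposal is correct and follows essentially the same approach as the paper's proof: derive evolution equations (with operator $B$) for the differences $u-\widetilde u$, $u_i-\widetilde u_i$, and $\overline{\overline u}-\widetilde{\overline{\overline u}}$, then chain the regularity estimate~\eqref{reg} from the innermost level outward and finish with~\eqref{cont_dep_on_data}. The only cosmetic differences are notation (the paper writes $v,v_i,\overline{\overline v}$ for your tilde-quantities and $d,\overline d$ for the differences) and that the paper spells out the repeated use of~\eqref{reg} for $k=1,2,3$ when bounding $\|\widetilde{\overline{\overline u}}\|_{\mathcal{C}^1}$, whereas you compress this into one line.
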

\begin{proof}
Set $\delta B:=\widetilde{B}-B$.
For $H_i\in\mathcal{L}^*(X)$, $i=1,2$, we estimate
$\Vert \overline{\overline{u}}-\overline{\overline{v}}\Vert_{\mathcal{C}([0,T],X)}$ where 
$\overline{\overline{v}}=F''(B+\delta B)[H_1,H_2]$, $\overline{\overline{u}}=F''( B)[H_1,H_2]$.  From \eqref{evolution3} we get 
\begin{multline*}
B(\overline{\overline{v}}\,'-\overline{\overline{u}}\,')+(A+BQ)(\overline{\overline{v}}-\overline{\overline{u}})
=-H_1({v}_2'-{u}_2'+Q({v}_2-{u}_2)) \\
-H_2({v}_1'-{u}_1'+Q({v}_1-{u}_1))
-\delta B( \overline{\overline{v}}\,' +Q
\overline{\overline{v}})
\end{multline*}
where  ${u}_i$ is the solution of \eqref{evolution4} and ${v}_i$ solves \eqref{evolution4} with $B$ replaced by 
$B+\delta B$ and $u$ by $v$, the latter being the solution of \eqref{evolution}  with $B+\delta B$
instead of $B$ and $v(0)=0$.
As before,  by the continuous dependence on the right hand side,
\begin{multline} \label{eq:aux2}
\Vert \overline{\overline{v}}-\overline{\overline{u}}\Vert_{\mathcal{C}([0,T],X)} \lesssim \Vert H_1\Vert_{\mathcal{L}(X)}
\Vert {v}_2-{u}_2\Vert_{\mathcal{C}^1([0,T],X)}+
\Vert H_2\Vert_{\mathcal{L}(X)}
\Vert {v}_1-{u}_1\Vert_{\mathcal{C}^1([0,T],X)}\\
+\Vert \delta B\Vert_{\mathcal{L}(X)}
\Vert \overline{\overline{v}}\Vert_{\mathcal{C}^1([0,T],X)}
\end{multline}
where the involved constant only depends on $\beta_-$, $\beta_+$, $T$, and $Q$. All constants in this proof, which are not explicitly given, only depend on these four quantities.

Further, by applying \eqref{reg} again repeatedly for $k=1$, $k=2$, and $k=3$, we obtain
\begin{align}\label{eq:aux4}
\Vert \overline{\overline{v}}\Vert_{\mathcal{C}^1([0,T],X)}
&\lesssim \Vert H_1\Vert_{\mathcal{L}(X)} \Vert{v}_2\Vert_{\mathcal{C}^2([0,T],X)}
+ \Vert H_2\Vert_{\mathcal{L}(X)} \Vert{v}_1\Vert_{\mathcal{C}^2([0,T],X)} \\[1mm]
&\lesssim \Vert H_1\Vert_{\mathcal{L}(X)} \Vert H_2\Vert_{\mathcal{L}(X)}  \Vert v\Vert_{\mathcal{C}^3([0,T],X)}
\lesssim \Vert H_1\Vert_{\mathcal{L}(X)} \Vert H_2\Vert_{\mathcal{L}(X)}\Vert f\Vert_{W^{3,1}([0,T],X)}. \nonumber
\end{align}
In view of \eqref{eq:aux2} it remains to investigate $\Vert {v}_i-{u}_i\Vert_{\mathcal{C}^1([0,T],X)}$, $i=1,2$.
We can use the same approach as above: Set $\overline{d}={v}_i-{u}_i$ and $d=v-u$.  Then, $\overline{d}(0)=0$ and
$$
B\overline{d}\,'+(A+BQ)\overline{d}=-H_i(d'+Qd)-\delta B ( {v}_i'+Q{v_i}).
$$
By \eqref{reg} as well as the second and third estimate from \eqref{eq:aux4},
\begin{align*}
\Vert \overline{d}\Vert_{\mathcal{C}^1([0,T],X)}
&\lesssim  \Vert H_i\Vert_{\mathcal{L}(X)}\big(\Vert d\Vert_{\mathcal{C}^2([0,T],X)}+\Vert \delta B\Vert_{\mathcal{L}(X)} \Vert f\Vert_{W^{3,1}([0,T],X)} \big).
\end{align*}
We are left with estimating $\Vert d\Vert_{\mathcal{C}^2([0,T],X)}$. Note that
$$
Bd'+(A+BQ)d=-\delta B(v'+Qv)
$$
and  \eqref{reg}  delivers 
$$
\Vert d\Vert_{\mathcal{C}^2([0,T],X)}\lesssim \Vert \delta B\Vert_{\mathcal{L}(X)}
\Vert v\Vert_{\mathcal{C}^3([0,T],X)} \lesssim \Vert \delta B\Vert_{\mathcal{L}(X)}
\Vert f\Vert_{W^{3,1}([0,T],X)} .
$$
So we found that
$$
\Vert {v}_i-{u}_i\Vert_{\mathcal{C}^1([0,T],X)} \lesssim 
\Vert H_i\Vert_{\mathcal{L}(X)}\Vert \delta B\Vert_{\mathcal{L}(X)} \Vert f\Vert_{W^{3,1}([0,T],X)} .
$$
Plugging this bound together with \eqref{eq:aux4} into  \eqref{eq:aux2} results in
$$
\sup_{H_1,H_2\in{\mathcal{L}^*(X)}}\frac{\Vert \overline{\overline{v}}-\overline{\overline{u}}\Vert_{\mathcal{C}([0,T],X)}}{
\Vert H_1\Vert_{\mathcal{L}(X)} \Vert H_2\Vert_{\mathcal{L}(X)}} \lesssim \Vert f\Vert_{W^{3,1}([0,T],X)}  
\Vert \delta B\Vert_{\mathcal{L}(X)}
$$
and we are done.
\end{proof}
\subsection{\color{black}Local ill-posedness} 
{\red
We recall briefly the concept of local ill-posedness from~\cite{hofmann_preprint97}: 
Let $\Psi\colon  \mathsf{D}(\Psi) \subset \mathcal{X}\to\mathcal{Y}$ be a mapping between infinite dimensional normed spaces.
Then, the equation $\Psi(\cdot)=y$  is {\em locally ill-posed} at $x^+\in \mathsf{D}(\Psi)$ if
in any neighborhood $U$ of $x^+$ there exists a sequence $\{\xi_k\}\subset U\cap \mathsf{D}(\Psi)$ with 
$\lim_{k\to\infty} \Vert \Psi(\xi_k)-y\Vert_\mathcal{Y}=0$ but  $\{\xi_k\}$ does not converge to~$x^+$ in $\mathcal{X}$.
}

Here, we consider \eqref{F} as a mapping with the larger image space $L^2([0,T],X)$. 
Theorem~4.1  of \cite{Kirsch-Rieder16} applies directly to \eqref{evolution} and \eqref{F}. The proof only needs a slight and obvious modification.
\begin{theorem} \label{t-illposed_4}
Let $u$ be the classical solution of \eqref{evolution} for
$u_0\in\mathsf{D}(A)$ and $f\in W^{1,1}([0,T],X)$. Then the equation $F(B)=u$ is locally ill-posed at any 
$\widehat{B}\in\mathsf{D}(F)$ satisfying $F(\widehat{B})=u$ if for any $r\in(0,1]$ there exists $\widehat{r}\in(0,r)$ and a sequence of bounded, symmetric 
and monotone operators $E_k\colon X\to X$ such that $\widehat{B}+E_k\in\mathsf{D}(F)$, $\widehat{r}\leq\Vert E_k\Vert_{\mathcal{L}(X)}\leq r$ for all 
$k\in\NN$, and $\lim_{k\to\infty}E_k v=0$ for all $v\in X$.
\end{theorem}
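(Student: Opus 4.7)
The plan is to construct a sequence $B_k=\widehat{B}+E_k$ inside any prescribed neighborhood of $\widehat{B}$ that stays bounded away from $\widehat{B}$ in operator norm but for which $F(B_k)\to u$ in $L^2([0,T],X)$. The key mechanism will be that the strong pointwise convergence $E_k v\to 0$, combined with compactness of the trajectory of the classical reference solution $u$, is enough to drive the state error to zero even though $E_k\not\to 0$ in norm.

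First, given an arbitrary neighborhood $U$ of $\widehat{B}$ in $\mathcal{L}^*(X)$, I would pick $r\in(0,1]$ so small that $\{B\in\mathcal{L}^*(X):\Vert B-\widehat{B}\Vert_{\mathcal{L}(X)}\le r\}\subset U$, and invoke the hypothesis to obtain $\widehat{r}\in(0,r)$ and the sequence $\{E_k\}$. Setting $B_k:=\widehat{B}+E_k$ yields $B_k\in U\cap\mathsf{D}(F)$ and $\Vert B_k-\widehat{B}\Vert_{\mathcal{L}(X)}\ge\widehat{r}$, so $\{B_k\}$ cannot converge to $\widehat{B}$ in $\mathcal{L}(X)$.

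Next I would work with the error $w_k:=F(B_k)-u$. Subtracting the state equations for $F(B_k)$ and $u$ and regrouping so that the operator on the left carries $\widehat{B}+E_k$ rather than $\widehat{B}$, I obtain
\[
(\widehat{B}+E_k)w_k'+Aw_k+(\widehat{B}+E_k)Qw_k=-E_k(u'+Qu),\qquad w_k(0)=0.
\]
The crucial feature here is that the right-hand side depends only on the \emph{fixed} classical reference solution $u$, not on the $k$-dependent $F(B_k)$. Because $\widehat{B}+E_k\in\mathsf{D}(F)$ for every $k$, the continuous dependence estimate \eqref{cont_dep_on_data} applies with a constant determined by $\beta_-$, $\beta_+$, $T$, and $Q$ alone, hence uniformly in $k$, and gives
\[
\Vert w_k\Vert_{\mathcal{C}([0,T],X)}\lesssim \Vert E_k(u'+Qu)\Vert_{L^1([0,T],X)}.
\]

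Finally I would show that the right-hand side tends to zero. Since $u_0\in\mathsf{D}(A)$ and $f\in W^{1,1}([0,T],X)$, Theorem~\ref{th:first} gives $u\in\mathcal{C}^1([0,T],X)$, so the curve $t\mapsto u'(t)+Qu(t)$ is continuous and its image $K\subset X$ is compact. The hypothesis supplies $\sup_k\Vert E_k\Vert_{\mathcal{L}(X)}\le r$ and $E_k v\to 0$ for every $v\in X$; a standard finite-cover argument (the principle that a uniformly bounded strongly convergent operator sequence converges uniformly on norm-compact sets) then upgrades this to $\sup_{t\in[0,T]}\Vert E_k(u'(t)+Qu(t))\Vert_X\to 0$. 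Integrating over $[0,T]$ and inserting into the previous display yields $w_k\to 0$ in $\mathcal{C}([0,T],X)$, a fortiori in $L^2([0,T],X)$, which verifies local ill-posedness.

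The main (essentially only) subtle point is ensuring that the continuous dependence constant is independent of $k$; this is guaranteed precisely by the requirement $\widehat{B}+E_k\in\mathsf{D}(F)$, so that the same $\beta_\pm$ govern the whole family. Everything else reduces to the elementary compactness-plus-strong-convergence lemma and the regularity of the reference solution already provided by the abstract framework.
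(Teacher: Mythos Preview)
Your argument is correct and follows essentially the approach the paper has in mind: it defers Theorem~\ref{t-illposed_4} to \cite[Theorem~4.1]{Kirsch-Rieder16} with the obvious modification of adding the $Q$-term, and the very same difference-equation-plus-uniform-stability strategy is spelled out explicitly in the proof of the concrete ill-posedness result (Theorem~4.3), where one derives $(\widehat{B}+E_k)w_k'+Aw_k+(\widehat{B}+E_k)Qw_k=-E_k(u'+Qu)$ and invokes~\eqref{cont_dep_on_data}. The only cosmetic difference is that the paper drives $\Vert E_k(u'+Qu)\Vert_{L^1([0,T],X)}\to 0$ by two applications of dominated convergence, whereas you use compactness of the trajectory $t\mapsto u'(t)+Qu(t)$ to upgrade strong convergence to uniform convergence; both routes are standard and equivalent here.
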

\section{Application to the viscoelastic wave equation} \label{sec:application}
We apply the abstract results to the viscoelastic wave equation in the formulation \eqref{C2:elawave_trans}. 
The underlying Hilbert space is 
$$
X=L^2(D,\RR^3)\times L^2(D,\RS)^{1+L}
$$
with inner product 
$$ 
\big\langle (\mathbf{v}, \Sig_0,\dotsc, \Sig_L), (\mathbf{w}, \vec\psi_0,\dotsc, \vec\psi_l)\big\rangle_X=
\int_D\Big(\mathbf{v}\cdot\mathbf{w}+\sum_{l=0}^L\boldsymbol\sigma_l:\boldsymbol\psi_l\Big)\,\d x
$$
where the colon indicates the Frobenius inner product on $\RR^{3\times 3}$.

To define the domain $\mathsf{D}(A)$ of $A$ \eqref{the_operators} we split
the boundary $\partial D$ of the bounded 
Lipschitz domain $D$  into disjoint parts $\partial D=\partial D_D\,\dot{\cup}\,\partial D_N$. Let $\mathbf{n}$ be the outer normal vector on $\partial D_N$. Then,
$$
\mathsf{D}(A)=\Big\{ (\vec w,\vec\psi_0,\dotsc \vec\psi_L)\in H^1_D\times H(\Div)^{1+L}: \sum_{l=0}^L\vec\psi_l\vec n=0
\text{ on } \partial D_N\Big\}
$$
with 
$H^1_D=\{\mathbf{v}\in H^1(D,\RR^3):\mathbf{v}=0\text{ on }\partial D_D\}$ and 
$H(\Div)=\bigl\{\boldsymbol\sigma\in
L^2\bigl(D,\RR^{3\times 3}_\text{sym}\bigr):\ddiv\boldsymbol\sigma_{*,j}\in L^2(D),\ j=1,2,3\bigr\}$.\footnote{The traces
$\boldsymbol\sigma_{*,j}\cdot\mathbf{n}$ exist in a suitable space, see, e.g., \cite{monk2003}.}
\begin{lemma}
The operator $A$ as defined in \eqref{the_operators} with $\mathsf{D}(A)\subset X$ from above is maximal monotone.
\end{lemma}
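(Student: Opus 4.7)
The plan is to split the proof into the two requirements of maximal monotonicity. Monotonicity is a direct integration-by-parts computation: for $w=(\vec w,\vec\psi_0,\ldots,\vec\psi_L)\in\mathsf{D}(A)$, Green's formula applied to $\sum_l\vec\psi_l\in H(\Div)$ against $\vec w\in H^1_D$ yields
\begin{equation*}
-\int_D\Div\Big(\sum_{l=0}^L\vec\psi_l\Big)\cdot\vec w\,\d x=\int_D\Big(\sum_{l=0}^L\vec\psi_l\Big):\nabla\vec w\,\d x-\int_{\partial D}\Big(\sum_{l=0}^L\vec\psi_l\Big)\vec n\cdot\vec w\,\d s,
\end{equation*}
and the boundary integral vanishes by the two trace conditions in $\mathsf{D}(A)$ ($\vec w=0$ on $\partial D_D$ and $\sum_l\vec\psi_l\vec n=0$ on $\partial D_N$). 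Using $\vec\psi_l^\top=\vec\psi_l$ to replace $\nabla\vec w$ by $\varepsilon(\vec w)$, the result cancels the $-\sum_l\int_D\varepsilon(\vec w):\vec\psi_l\,\d x$ contribution of the other blocks exactly, so $\langle Aw,w\rangle_X=0$; in fact $A$ is skew-symmetric.

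For surjectivity of $I+A$, given $(\vec g,\vec\phi_0,\ldots,\vec\phi_L)\in X$ I eliminate $\vec\psi_l=\varepsilon(\vec w)+\vec\phi_l$ from the system $(I+A)w=(\vec g,\vec\phi_0,\ldots,\vec\phi_L)$ and reduce to the single elliptic equation $\vec w-(L+1)\Div\varepsilon(\vec w)=\vec g+\Div\big(\sum_l\vec\phi_l\big)$ for $\vec w\in H^1_D$. Its weak form uses the bilinear form
\begin{equation*}
a(\vec w,\vec\varphi)=\int_D\vec w\cdot\vec\varphi\,\d x+(L+1)\int_D\varepsilon(\vec w):\varepsilon(\vec\varphi)\,\d x,
\end{equation*}
which is bounded, and coercive on $H^1_D$ by Korn's second inequality combined with the $L^2$-mass term (so no assumption on the surface measure of $\partial D_D$ is required). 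Lax-Milgram supplies a unique $\vec w$, and I set $\vec\psi_l:=\varepsilon(\vec w)+\vec\phi_l\in L^2(D,\RS)$.

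The delicate last step, and the one I expect to carry the real work, is to verify that the constructed $w$ actually belongs to $\mathsf{D}(A)$. Testing the variational identity against $\vec\varphi\in C_c^\infty(D,\RR^3)$ extracts the PDE $-\Div\big(\sum_l\vec\psi_l\big)=\vec g-\vec w\in L^2$, so the collective stress lies in $H(\Div)$---which is all the divergence the operator $A$ actually sees. Testing next against $\vec\varphi\in H^1_D$ with arbitrary trace on $\partial D_N$ and applying Green's formula once more isolates a boundary pairing that must vanish, yielding the Neumann-type condition $\sum_l\vec\psi_l\vec n=0$ on $\partial D_N$ in the natural $H^{-1/2}$-duality. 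With this $(I+A)w$ equals the prescribed right-hand side and maximality is proved.
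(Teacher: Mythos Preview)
Your approach is the paper's: skew-symmetry via integration by parts using the two boundary conditions built into $\mathsf{D}(A)$, then reduction of $(I+A)w=(\vec g,\vec\phi_0,\dots,\vec\phi_L)$ to the displacement-type variational problem on $H^1_D$, solved by Lax--Milgram and Korn, followed by $\vec\psi_l:=\varepsilon(\vec w)+\vec\phi_l$ and recovery of the Neumann condition from the weak identity. The paper is terser and defers the domain check to \cite{Kirsch-Rieder16}.

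The one genuine gap is the sentence ``the collective stress lies in $H(\Div)$---which is all the divergence the operator $A$ actually sees.'' That is a statement about what $A$ uses, not about membership in $\mathsf{D}(A)$ as written, which requires each $\vec\psi_l\in H(\Div)$ individually. Your construction does not yield this, and for $L\ge 1$ it cannot: take $\vec\phi_0=\vec\chi$, $\vec\phi_1=-\vec\chi$ with symmetric $\vec\chi\in L^2(D,\RS)\setminus H(\Div)$ and the remaining $\vec\phi_l=0$. Then $\sum_l\vec\phi_l=0$, the variational solution satisfies $\Div\varepsilon(\vec w)=\tfrac{1}{L+1}(\vec w-\vec g)\in L^2$, and hence $\Div\vec\psi_0=\Div\varepsilon(\vec w)+\Div\vec\chi\notin L^2$. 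Since skew-symmetry forces uniqueness of any preimage, this right-hand side has none in the stated $\mathsf{D}(A)$. The resolution is to enlarge $\mathsf{D}(A)$ so as to require only $\sum_{l=0}^L\vec\psi_l\in H(\Div)$ (which is exactly what is needed for $Aw\in X$ and for the normal trace of the sum to make sense); with that domain your argument is complete and correct. The paper's own proof inherits the same issue behind the reference to \cite{Kirsch-Rieder16}, whose setting carries a single stress component so that the difficulty does not appear.
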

\begin{proof}
Since 
$$
\big\langle A (\mathbf{v}, \Sig_0,\dotsc, \Sig_L), (\mathbf{w}, \vec\psi_0,\dotsc, \vec\psi_l)\big\rangle_X=
\int_D\Big[\Div \Big(\sum_{l=0}^L \Sig_l\Big)\cdot\mathbf{w}+\vec\varepsilon(\V):\Big(\sum_{l=0}^L\boldsymbol\psi_l\Big)\Big]\,\d x
$$
we can proceed exactly as in the proof of Lemma~6.1 from \cite{Kirsch-Rieder16} to show skew-symmetry of $A$. Hence,
$\langle Aw,w\rangle_X=0$ for all $w\in  \mathsf{D}(A)$.

Next we show that $I+A$ is onto adapting arguments of  \cite{Kirsch-Rieder16}. We will be brief therefore. For $(\vec f,\vec g_0,\dotsc,\vec g_L)\in X$ we need to find $(\V,\Sig_0,\dotsc,\Sig_L)\in  \mathsf{D}(A)$ satisfying
$$
\V - \Div \Big(\sum_{l=0}^L \Sig_l\Big) = \vec f,\qquad \Sig_l -\vec\varepsilon(\V)=\vec g_l,\quad l=0,\dotsc,L.
$$
We multiply the  equation on the left  by a
$\mathbf{w}\in H^1_D$, integrate over $D$ and use the divergence theorem to get
$$
\int_D \Big(\V\cdot\vec w+ \Big(\sum_{l=0}^L \Sig_l\Big) :\nabla \vec w\Big)\d x =\int_D \vec f\cdot \vec w\,\d x.
$$
Now we sum up the $L+1$ equations on the right, use the relation $\vec\varepsilon(\V):\Sig=\nabla \V :\Sig$ {\red for $\Sig\in \RR^{3\times 3}_\text{sym}$}, and arrive at
$$
\int_D \big(\V\cdot\vec w+ (L+1)\vec\varepsilon(\V) :\vec\varepsilon( \vec w)\big)\d x =\int_D \Big(\vec f\cdot \vec w
-\sum_{l=0}^L  \vec g_l :\nabla\vec w\Big)\d x\quad\text{for all }\vec w\in H^1_D.
$$
This is a standard variational problem (cf.\ displacement ansatz in elasticity) admitting a unique solution $\mathbf{v}\in H^1_D$. 

Set $\Sig_l=\vec g_l+\vec\varepsilon(\V)$ and follow \cite{Kirsch-Rieder16} to verify $(\V,\Sig_0,\dotsc,\Sig_L)\in  \mathsf{D}(A)$.
\end{proof}
Next we show that $B\in \mathcal{L}(X)$ from \eqref{the_operators} is well defined with the required properties.
As in \cite{Kirsch-Rieder16} we consider $C$ of \eqref{C}  as a mapping from $\mathsf{D}(C)=\bigl\{(m,p)\in\RR^2: \underline{\mathrm{m}}\le 
m\le\overline{\mathrm{m}},\ \underline{\mathrm{p}}\le p\le \overline{\mathrm{p}}\bigr\}$ into $\operatorname{Aut}(\RR^{3\times 3}_\text{sym})$\footnote{\red This is the space of linear maps from $\RR^{3\times 3}_\text{sym}$ into itself (space of 
automorphisms).} with constants $0<\underline{\mathrm{m}}<\overline{\mathrm{m}}$ and
$0<\underline{\mathrm{p}}<\overline{\mathrm{p}}$ such that $3\underline{\mathrm{p}}>4 \overline{\mathrm{m}}$.\footnote{Note that in \cite{Kirsch-Rieder16}  and \cite{zeltmann_phd}   different $C$'s are used.} For $(m,p)\in\mathsf{D}(C)$,
\begin{equation} \label{C^-1}
\widetilde{C}(m,p) :=C(m,p)^{-1}=C\left(\frac{1}{4m},\frac{p-m}{m(3p-4m)}\right).
\end{equation}
Moreover, $C(m,p)\vec M:\vec N=\vec M:C(m,p)\vec N$ and
$$
\min\{ 2\underline{\mathrm{m}}, 3\underline{\mathrm{p}}-4 \overline{\mathrm{m}}\} \, \vec M:\vec M \le C(m,p)\vec M:\vec M\le \max\{ 2\overline{\mathrm{m}}, 3\overline{\mathrm{p}}-4 \underline{\mathrm{m}}\} \,  \vec M:\vec M,
$$
see, e.g., \cite[Lemma~50]{zeltmann_phd}. 
Provided $\rho(x) >0$, $\big(\mu_{0}(x),\pi_{0}(x)  \big), \big(\tau_\sh(x)\mu_0(x), \tau_\pr(x)\pi_0(x)\big)
\in\mathsf{D}(C)$ for almost all $x \in D$ we conclude that
\begin{equation} \label{B}
B\!\begin{pmatrix}
\vec w\\[1mm]
\vec\psi_0\\[1mm]
\vec\psi_1\\[1mm]
\vdots\\[1mm]
\vec\psi_L
\end{pmatrix}\!=\!\begin{pmatrix}
{\rho}\,\vec w\\[1mm]
\widetilde{C}\big(\mu_{0},\pi_{0}  \big)\vec\psi_0\\[1mm]
\widetilde{C}\big(\tau_\sh\mu_0, \tau_\pr\pi_0\big)\vec\psi_1\\[1mm]
\vdots\\[1mm]
\widetilde{C}\big(\tau_\sh\mu_0, \tau_\pr\pi_0\big)\vec\psi_L
\end{pmatrix}
\end{equation}
{\red yielding a uniformly positive} $B\in\mathcal{L}^*(X)$  in the  sense of our general hypotheses from the beginning of Section~\ref{sec:abstract}. Hence, the general hypotheses are satisfied for the viscoelastic wave equation.
\subsection{\red Full waveform forward operator}
In FWI the five parameters $ (\rho,v_{\sh}, \tau_{\sh}, v_{\pr},\tau_{\pr})$ are of interest. Therefore we will define a parameter-to-solution map $\Phi$  which takes these 
parameters as arguments. A physically meaningful domain of definition for $\Phi$ is
\begin{multline*}
\mathsf{D}(\Phi)=\big\{ (\rho,v_{\sh}, \tau_{\sh}, v_{\pr},\tau_{\pr})\in L^\infty(D)^5: \rho_{\min} \le \rho(\cdot)\le \rho_{\max},\ 
 v_{\pr,\min} \le v_\pr(\cdot)\le  v_{\pr,\max}, \\ v_{\sh,\min} \le v_\sh(\cdot)\le  v_{\sh,\max},\ 
 \tau_{\pr,\min} \le \tau_\pr(\cdot)\le  \tau_{\pr,\max}, \ \tau_{\sh,\min} \le \tau_\sh(\cdot)\le  \tau_{\sh,\max} \ \text{a.e.\ in } D\big\}
\end{multline*}
with suitable positive bounds $0<\rho_{\min} < \rho_{\max}<\infty$, etc. 

In view of \eqref{V_sp} we set
$$
\mu_{\min}:=\frac{\rho_{\min}\, v_{\sh,\min}^2}{1+\tau_{\sh,\max}\alpha}\quad \text{and}\quad
\mu_{\max}:=\frac{\rho_{\max}\, v_{\sh,\max}^2}{1+\tau_{\sh,\min}\alpha}
$$
which are induced  lower and upper bounds for $\mu_0$.
We set the bounds   $\pi_{\min}$ and $\pi_{\max}$ for $\pi_0$  accordingly  by replacing $\sh$ by $\pr$. 
Next we define 
$\underline{\mathrm{p}}$, $\overline{\mathrm{p}}$, $\underline{\mathrm{m}}$,  and $\overline{\mathrm{m}}$ such that 
$(\mu_{0},\pi_{0}), (\tau_\sh\mu_0, \tau_\pr\pi_0)$ as functions of 
$(\rho,v_{\pr},
v_{\sh}, \tau_{\pr}, \tau_{\sh})\in \mathsf{D}(\Phi)$ are in $\mathsf{D}(C)$. Indeed, 
$$
\underline{\mathrm{p}}:= \pi_{\min}\,\min\{1, \tau_{\pr,\min}\}\quad\text{and}\quad 
\overline{\mathrm{p}}:= \pi_{\max}\,\max\{1, \tau_{\pr,\max}\}
$$
with $\underline{\mathrm{m}}$ and  $\overline{\mathrm{m}}$ set correspondingly will do the job. The restriction  $3\underline{\mathrm{p}}>4 \overline{\mathrm{m}}$ translates into
$$
\frac{4}{3}\;\frac{\rho_{\max}}{\rho_{\min}}\;\frac{1+\tau_{\pr,\max}\alpha}{1+\tau_{\sh,\min}\alpha}
\;\frac{\max\{1, \tau_{\sh,\max}\}}{\min\{1, \tau_{\pr,\min}\}}< \frac{v_{\pr,\min}^2}{v_{\sh,\max}^2}
$$
which reflects in a way the physical fact that pressure waves propagate considerably faster than shear waves.

For $\vec f\in W^{1,1}([0,T],L^2(D,\RR^3))$ and $u_0=(\vec v(0),\Sig_0(0),\dots,\Sig_L(0))\in \mathsf{D}(A)$ the 
{\em {\red full waveform forward operator}}
$$
\Phi\colon \mathsf{D}(\Phi)\subset L^\infty(D)^5\to L^2([0,T],X),\quad (\rho,v_{\sh}, \tau_{\sh}, v_{\pr},\tau_{\pr})
\mapsto (\vec v,\Sig_0,\dots,\Sig_L),
$$
is well defined where $(\vec v,\Sig_0,\dots,\Sig_L)$ is the unique classical solution of \eqref{C2:elawave_trans} with initial value $u_0$.

To benefit from our abstract results  we factorize $\Phi=F\circ V$ where $F$ is as in \eqref{F} and 
$$
V\colon \mathsf{D}(\Phi)\subset L^\infty(D)^5\to\mathcal{L}^*(X), \quad
 (\rho,v_{\sh}, \tau_{\sh}, v_{\pr},\tau_{\pr})\mapsto B,
$$
where $B$ is defined in \eqref{B} via \eqref{V_p2}. 
\begin{remark} \label{rem_V}
Note that the image of $V$ is in $\mathsf{D}(F)$ by an appropriate choice of
$\beta_-$ and $\beta_+$ in terms of $\rho_{\min}$, $\rho_{\max}$, $\underline{\mathrm{p}}$, $\overline{\mathrm{p}}$, 
$\underline{\mathrm{m}}$,  and $\overline{\mathrm{m}}$. 
\end{remark}
The inverse problem of FWI in the viscoelastic regime is locally ill-posed. This can be proved using Theorem~\ref{t-illposed_4}, compare the proof of Theorem~6.7 of \cite{Kirsch-Rieder16}. We give a direct proof though.
\begin{theorem}
The inverse problem $\Phi(\cdot)=  (\vec v,\Sig_0,\dots,\Sig_L)$ is locally ill-posed at any 
interior point of $\vec p=(\rho,v_{\sh}, \tau_{\sh}, v_{\pr},\tau_{\pr})\in\mathsf{D}(\Phi)$.
\end{theorem}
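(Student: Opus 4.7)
The plan is a direct construction that exploits the mismatch between the $L^\infty(D)^5$ topology on parameters and the $L^2([0,T],X)$ topology on states: spatially localized perturbations of shrinking support keep a fixed $L^\infty$-size while becoming invisible to the averaged wave field.

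Concretely, I fix an interior point $\vec p^+$ of $\mathsf{D}(\Phi)$, choose $r>0$ small enough that the $L^\infty(D)^5$-ball of radius $r$ around $\vec p^+$ lies in $\mathsf{D}(\Phi)$, pick any $x_0\in D$, and set
\[
\vec p_k := \vec p^+ + (r\chi_{E_k},0,0,0,0),\qquad E_k := B(x_0,1/k)\cap D.
\]
Then $\Vert \vec p_k-\vec p^+\Vert_{L^\infty(D)^5}=r$ for every $k$, so $\vec p_k\not\to\vec p^+$ in $L^\infty$. The remaining task is to show $\Phi(\vec p_k)\to\Phi(\vec p^+)$ in $L^2([0,T],X)$.

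To this end I set $B^+=V(\vec p^+)$, $B_k=V(\vec p_k)$, $u^+=F(B^+)$, $u_k=F(B_k)$, and subtract the two instances of \eqref{evolution}. A short manipulation shows that the difference $e_k:=u_k-u^+$ satisfies
\[
B_k e_k'(t) + (A+B_kQ)e_k(t) = -(B_k-B^+)\bigl((u^+)'(t)+Qu^+(t)\bigr),\quad e_k(0)=0.
\]
Since each $B_k$ lies in $\mathsf{D}(F)$ with common bounds $\beta_\pm$, the continuous-dependence estimate \eqref{cont_dep_on_data} applies with a $k$-uniform constant, which reduces the problem to proving
\[
\int_0^T \bigl\Vert (B_k-B^+)\bigl((u^+)'(t)+Qu^+(t)\bigr)\bigr\Vert_X\,\d t \xrightarrow{k\to\infty} 0,
\]
where $(u^+)'+Qu^+\in\mathcal{C}([0,T],X)$ by Theorem~\ref{th:first}, so the integrand is well defined pointwise in $t$.

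The crux of the argument is the observation that $B_k-B^+$ is a pointwise multiplication operator supported on $E_k$: perturbing only $\rho$ on $E_k$ shifts $\mu_0,\pi_0$ via \eqref{V_p2} only on $E_k$, and the ensuing change of $\widetilde{C}(\mu_0,\pi_0)$ is locally Lipschitz in $(\mu_0,\pi_0)$, see \eqref{C^-1}. Hence for every fixed $v\in X$,
\[
\Vert (B_k-B^+)v\Vert_X^2 \lesssim r^2\int_{E_k}|v|^2\,\d x \xrightarrow{k\to\infty} 0
\]
because $|v|^2\in L^1(D)$. Plugging in $v=(u^+)'(t)+Qu^+(t)$ and dominating by $Cr\,\Vert(u^+)'(t)+Qu^+(t)\Vert_X\in L^1(0,T)$, dominated convergence finishes the job. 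The only delicate point I anticipate is the $k$-uniformity of the constants in the continuous-dependence estimate, but this is built into the definition of $\mathsf{D}(F)$ and therefore automatic; everything else is bookkeeping on the multiplication-operator structure of $B$ inherited from \eqref{B} and \eqref{V_p2}.
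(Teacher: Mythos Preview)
Your proposal is correct and follows essentially the same route as the paper: construct localized perturbations of fixed $L^\infty$-size on shrinking sets, subtract the two evolution equations, apply the $k$-uniform continuous-dependence estimate \eqref{cont_dep_on_data}, and invoke dominated convergence (in $x$ and then in $t$) using that $V(\vec p_k)-V(\vec p^+)$ acts as a multiplication operator supported on a null-converging set. The only cosmetic difference is that the paper perturbs all five parameters simultaneously by $r\chi_{K_n}$ while you perturb only $\rho$; neither choice affects the argument.
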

\begin{proof}
Fix a point $\xi\in D$ and define balls $K_n=\{y\in\RR^3: |y-\xi|\le \delta/n\}$ with a $\delta>0$ so small that $K_n\subset D$
for all $n\in\NN$. Let $\chi_n$ be the indicator function of $K_n$. Further, for any $r>0$ such that 
$\vec p_n:=\vec p+r(\chi_n,\chi_n,\chi_n,\chi_n,\chi_n) \in\mathsf{D}(\Phi)$ we have that $\Vert \vec p_n-\vec p\Vert_{L^\infty(D)^5}=r$, that is,
$\vec p_n$ does not converge to $\vec p$. However, $\lim_{n\to\infty}\Vert \Phi(\vec p_n)-\Phi(\vec p)
\Vert_{L^2([0,T],X)}=0$ as we demonstrate now.

Let $u_n=\Phi(\vec p_n)$ and $u=\Phi(\vec p)$. Then, $d_n=u_n-u$ satisfies 
$$
V(\vec p_n) d_n'+Ad_n + V(\vec p_n)Qd_n=\big(V(\vec p)-V(\vec p_n)\big)(u'+Qu),\quad d_n(0)=0.
$$
By the continuous dependence of $d_n$  on the data, see  \eqref{cont_dep_on_data},  we obtain
$$
\Vert d_n\Vert_{L^2([0,T],X)}\lesssim \big\Vert \big(V(\vec p)-V(\vec p_n)\big)(u'+Qu) \big\Vert_{L^1([0,T],X)}
$$
where the constant is independent of $n$, see Remark~\ref{rem_V}. Next one shows 
$\lim_{n\to\infty}\Vert \big(V(\vec p)-V(\vec p_n)\big)v\Vert_X=0$ for any $v\in X$ using $\vec p_n\to \vec p$ pointwise 
a.e.~ in $D$ as $n\to\infty$ and the dominated convergence theorem. 
Since $\Vert V(\vec p_n)\Vert_X\lesssim 1$ for all 
$n\in\NN$ a further application of the dominated convergence theorem with respect to the time domain yields
$$
\int_0^T \big\Vert \big(V(\vec p)-V(\vec p_n)\big)\big(u'(t)+Qu(t)\big) \big\Vert_X\d t \xrightarrow{n\to\infty} 0
$$
and finishes the proof.
\end{proof}
\subsection{First order differentiability}
To derive the first order Fr\'echet derivative of $\Phi$ we provide the Fr\'echet derivative of 
$V$. Its formulation needs the derivative of $\widetilde{C}$ which we take from 
\cite[Lemma~6.3]{Kirsch-Rieder16}:
\begin{equation}\label{C'}
\widetilde{C}'(m,p)\begin{bmatrix}
\widehat{m}\\
\widehat{p}
\end{bmatrix} =-\widetilde{C}(m,p)\circ C(\widehat{m},\widehat{p})\circ\widetilde{C}(m,p)
\end{equation}
for $(m,p)\in \operatorname{int}(\mathsf{D}(C))$ and $(\widehat{m},\widehat{p})\in\RR^2$.

Let $\vec p=(\rho,v_{\sh}, \tau_{\sh}, v_{\pr},\tau_{\pr})\in   \operatorname{int}(\mathsf{D}(\Phi))$ and  
$\widehat{\vec p}=(\widehat{\rho}, \widehat{v}_{\sh}, \widehat{\tau}_{\sh}, \widehat{v}_{\pr}, \widehat{\tau}_{\pr})\in 
L^\infty(D)^5$. Then, $V'(\vec p)\widehat{\vec p}\in \mathcal{L}^*(X)$ is given by 
\begin{equation}\label{V'}
V'(\vec p)\widehat{\vec p}\!
\begin{pmatrix}\!
\vec w\\[1mm]
\vec\psi_0\\[1mm]
\vdots\\[1mm]
\vec\psi_L
\!\end{pmatrix} = 
\begin{pmatrix}
\widehat{\rho} \,\vec w\\[1mm]
-\frac{\widehat{\rho}}{\rho^2}\widetilde{C}(\mu,\pi)\vec\psi_0 +\frac{1}{\rho} \widetilde{C}'(\mu,\pi)\!
\begin{bmatrix}
\widetilde{\mu}\\[1mm]
\widetilde{\pi}
\end{bmatrix}\!
\vec \psi_0\\[6mm]
-\frac{\widehat{\rho}}{\rho^2}\widetilde{C}(\tau_\sh\mu,\tau_\pr\pi)\vec\psi_1 +\frac{1}{\rho} \widetilde{C}'(\tau_\sh\mu,\tau_\pr\pi)\!
\begin{bmatrix}
\widehat{\mu}\\[1mm]
\widehat{\pi}
\end{bmatrix}\!
\vec \psi_1\\[1mm]
\vdots\\[1mm]
-\frac{\widehat{\rho}}{\rho^2}\widetilde{C}(\tau_\sh\mu,\tau_\pr\pi)\vec\psi_L +\frac{1}{\rho} \widetilde{C}'(\tau_\sh\mu,\tau_\pr\pi)\!
\begin{bmatrix}
\widehat{\mu}\\[1mm]
\widehat{\pi}
\end{bmatrix}\!
\vec \psi_L
\end{pmatrix}
\end{equation}
where $\mu=\mu_0/\rho$,  $\pi=\pi_0/\rho$, see \eqref{V_p2}, and 
\begin{alignat}{2}
\widetilde{\mu} &= \frac{2v_{\sh}}{1+\tau_{\sh}\alpha}\, \widehat{v}_{\sh} - \frac{\alpha\,v_{\sh}^2}{(1+\tau_{\sh}\alpha)^2}\, \widehat{\tau}_{\sh},&\qquad
\widetilde{\pi} &=\frac{2v_{\pr}}{1+\tau_{\pr}\alpha}\, \widehat{v}_{\pr} - \frac{\alpha\,v_{\pr}^2}{(1+\tau_{\pr}\alpha)^2}\, \widehat{\tau}_{\pr},\label{tilde} \\[1mm]
\widehat{\mu} &= \frac{2\tau_\sh\,v_{\sh}}{1+\tau_{\sh}\alpha}\, \widehat{v}_{\sh} + \frac{v_{\sh}^2}{(1+\tau_{\sh}\alpha)^2}\, \widehat{\tau}_{\sh},&\qquad
\widehat{\pi} &=\frac{2\tau_\pr\,v_{\pr}}{1+\tau_{\pr}\alpha}\, \widehat{v}_{\pr} +\frac{v_{\pr}^2}{(1+\tau_{\pr}\alpha)^2}\, \widehat{\tau}_{\pr}.\label{hat}
\end{alignat}
\begin{theorem}\label{th:FWI_first}
Under the assumptions made in this section the {\red full waveform forward operator} $\Phi$ is Fr\'echet differentiable at any interior point 
$\vec p=(\rho,v_{\sh}, \tau_{\sh}, v_{\pr},\tau_{\pr})$ of $\mathsf{D}(\Phi)$: For 
$\widehat{\vec p}=(\widehat{\rho}, \widehat{v}_{\sh}, \widehat{\tau}_{\sh}, \widehat{v}_{\pr}, \widehat{\tau}_{\pr})\in 
L^\infty(D)^5$ we have  $\Phi'(\vec p)\widehat{\vec p}= \overline{u}$ where $\overline{u}=(\overline{\vec v},\overline{\vec \sigma}_0,\dotsc,\overline{\vec \sigma}_L)\in \mathcal{C}([0,T],X)$ with $\overline{u}(0)=0$ is the mild solution of
\begin{subequations}\label{FWI_first}
\begin{align}
\rho\,\partial_t \overline{\vec v} &= \Div\Big(\sum_{l=0}^L \overline{\vec \sigma}_l\Big)- \widehat{\rho}\, \partial_t\vec v,\\[1mm]
\partial_t \overline{\vec \sigma}_0 &= C(\mu_0,\pi_0)\vec \varepsilon(\overline{\vec v} ) + \big(\widehat{\rho}\,
C(\mu,\pi)+\rho\,C(\widetilde{\mu},\widetilde{\pi})\big) \varepsilon(\vec v), \label{FWI_first2}\\[2mm]
\partial_t \overline{\vec \sigma}_l &=  C(\tau_\sh\mu_0,\tau_\pr\pi_0)\vec \varepsilon(\overline{\vec v})  \label{FWI_first3}\\
&\qquad\quad-
\frac{1}{\tau_{\Sig,l}}\,\overline{\vec \sigma}_l + \big(\widehat{\rho}\,
C(\tau_\sh\mu,\tau_\pr\pi)+\rho\,C(\widehat{\mu},\widehat{\pi})\big) \varepsilon(\vec v),\quad l=1,\dotsc,L,\nonumber
\end{align}
\end{subequations}
where $(\vec v,\vec \sigma_0,\dotsc,\vec \sigma_L)$ is the classical solution of \eqref{C2:elawave_trans}.
\end{theorem}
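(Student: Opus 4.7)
The plan is to establish differentiability of $\Phi=F\circ V$ via the chain rule, then translate the resulting abstract evolution equation back into the PDE form \eqref{FWI_first}.

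First I would verify that $V\colon \mathsf{D}(\Phi)\subset L^\infty(D)^5\to\mathcal{L}^*(X)$ is Fr\'echet differentiable at any interior point $\vec p$ with derivative given by \eqref{V'}. Since $V(\vec p)$ acts diagonally on the components of $w=(\vec w,\vec\psi_0,\dots,\vec\psi_L)$ with multipliers $\rho$, $\widetilde{C}(\mu_0,\pi_0)$, and $\widetilde{C}(\tau_\sh\mu_0,\tau_\pr\pi_0)$ (see \eqref{B}), the differentiation is pointwise in $x\in D$. The map $\rho\mapsto \rho$ is linear; the dependence of $\widetilde{C}(\mu_0,\pi_0)$ on $(\rho,v_\sh,\tau_\sh,v_\pr,\tau_\pr)$ factors through $(\mu_0,\pi_0)$, which are smooth rational functions of $\vec p$ given by \eqref{V_p2}. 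Using the homogeneity identity $C(\lambda m,\lambda p)=\lambda C(m,p)$ (immediate from \eqref{C}), we rewrite $\widetilde{C}(\mu_0,\pi_0)=\rho^{-1}\widetilde{C}(\mu,\pi)$ with $\mu=\mu_0/\rho$, $\pi=\pi_0/\rho$, and similarly for the $(\tau_\sh\mu_0,\tau_\pr\pi_0)$ block. Combining this with the product and chain rules, with $\widetilde{C}'$ given by \eqref{C'} and the inner derivatives $\widetilde{\mu},\widetilde{\pi},\widehat{\mu},\widehat{\pi}$ computed from \eqref{V_p2} (matching \eqref{tilde}, \eqref{hat}), yields \eqref{V'}. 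The remainder estimate $o(\Vert \widehat{\vec p}\Vert_{L^\infty(D)^5})$ is uniform in $x$ because the parameters are a.e.\ confined to a compact set where $\widetilde{C}$ is $\mathcal{C}^\infty$, so Taylor's theorem applies pointwise and integrates to an $L^\infty$ estimate of $V(\vec p+\widehat{\vec p})-V(\vec p)-V'(\vec p)\widehat{\vec p}$ in $\mathcal{L}(X)$.

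Next, by Remark~\ref{rem_V} we have $V(\vec p)\in\operatorname{int}(\mathsf{D}(F))$, and under the stated hypotheses on $\vec f$ and $u_0$ Theorem~\ref{th:first} provides Fr\'echet differentiability of $F$ at $V(\vec p)$. The chain rule then gives $\Phi'(\vec p)\widehat{\vec p}=F'(V(\vec p))\,V'(\vec p)\widehat{\vec p}=\overline u$, where $\overline u$ solves
\begin{equation*}
B\overline u'(t)+A\overline u(t)+BQ\overline u(t)=-\bigl(V'(\vec p)\widehat{\vec p}\bigr)\bigl(u'(t)+Qu(t)\bigr),\quad \overline u(0)=0,
\end{equation*}
with $B=V(\vec p)$ and $u=(\vec v,\Sig_0,\dots,\Sig_L)=F(B)$.

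Finally, I would translate this abstract equation into the component-wise system \eqref{FWI_first}. The first component is immediate: reading off the first rows of $A$, $B$, $Q$, and $V'(\vec p)\widehat{\vec p}$ yields $\rho\,\partial_t\overline{\vec v}-\Div(\sum_l\overline{\vec\sigma}_l)=-\widehat{\rho}\,\partial_t\vec v$. For the stress components, after applying the (block-diagonal) inverse of $B$ one obtains $\partial_t\overline{\vec\sigma}_0=C(\mu_0,\pi_0)\varepsilon(\overline{\vec v})+C(\mu_0,\pi_0)\bigl[\tfrac{\widehat\rho}{\rho^2}\widetilde C(\mu,\pi)-\tfrac1\rho \widetilde C'(\mu,\pi)[\widetilde\mu;\widetilde\pi]\bigr]\partial_t\vec\sigma_0$, and analogously for $\overline{\vec\sigma}_l$, $l\ge 1$ (the extra $BQ$ term producing the $-\tau_{\Sig,l}^{-1}\overline{\vec\sigma}_l$ in \eqref{FWI_first3}). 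Now two identities collapse these expressions: homogeneity gives $C(\mu_0,\pi_0)\widetilde C(\mu,\pi)=\rho\,\mathrm{Id}$, and \eqref{C'} combined with $C\widetilde C=\mathrm{Id}$ gives $-C(\mu,\pi)\widetilde C'(\mu,\pi)[\widetilde\mu;\widetilde\pi]=C(\widetilde\mu,\widetilde\pi)\widetilde C(\mu,\pi)$. Substituting and using the state equation $\partial_t\vec\sigma_0=C(\mu_0,\pi_0)\varepsilon(\vec v)=\rho\,C(\mu,\pi)\varepsilon(\vec v)$ to eliminate $\widetilde C(\mu,\pi)\partial_t\vec\sigma_0=\rho\varepsilon(\vec v)$ turns the right-hand side into $\widehat\rho\,C(\mu,\pi)\varepsilon(\vec v)+\rho\,C(\widetilde\mu,\widetilde\pi)\varepsilon(\vec v)$, matching \eqref{FWI_first2}. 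The analogous computation for $l\ge 1$ proceeds identically with $(\tau_\sh\mu,\tau_\pr\pi)$ and $(\widehat\mu,\widehat\pi)$ in place of $(\mu,\pi)$ and $(\widetilde\mu,\widetilde\pi)$.

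The genuinely delicate step is the Fr\'echet differentiability of $V$, not the chain rule or the final translation. The algebraic manipulation via homogeneity and \eqref{C'} is routine once one is careful to track the factors of $\rho$ that distinguish $(\mu_0,\pi_0)$ from $(\mu,\pi)$; the real work lies in ensuring that the pointwise smoothness of $\widetilde C$ on $\mathsf D(C)$ together with the uniform bounds defining $\mathsf D(\Phi)$ lifts to an $\mathcal L(X)$-norm remainder estimate, which is where the restriction $3\underline{\mathrm p}>4\overline{\mathrm m}$ and the fixed positive bounds on all five parameters enter in an essential way.
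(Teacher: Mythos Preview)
Your proposal is correct and follows essentially the same route as the paper: apply the chain rule $\Phi'(\vec p)\widehat{\vec p}=F'(V(\vec p))V'(\vec p)\widehat{\vec p}$, invoke Theorem~\ref{th:first} to obtain the abstract equation \eqref{evolution2} with $H=V'(\vec p)\widehat{\vec p}$, and then unwind it componentwise using \eqref{C2:elawave2_trans}, \eqref{C2:elawave3_trans}, \eqref{C'}, and \eqref{V'}. The paper's proof is terser, simply writing out the block system and citing those identities; your explicit use of the homogeneity $C(\rho\mu,\rho\pi)=\rho\,C(\mu,\pi)$ and the consequence $-C(\mu,\pi)\widetilde C'(\mu,\pi)[\widetilde\mu;\widetilde\pi]=C(\widetilde\mu,\widetilde\pi)\widetilde C(\mu,\pi)$ is exactly what lies behind that citation, and your additional justification of the Fr\'echet differentiability of $V$ (which the paper states as \eqref{V'} without proof) is a welcome elaboration rather than a departure.
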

\begin{proof}
We apply Theorem~\ref{th:first} to $\Phi'(\vec p)\widehat{\vec p} = F'(V(\vec p))V'(\vec p)\widehat{\vec p}$ and get the system
\begin{multline*}
\begin{pmatrix}
{\rho}\,\partial_t \overline{\vec v}\\[1mm]
\frac{1}{\rho}\,\widetilde{C}(\mu,\pi )\partial_t \overline{\vec \sigma}_0\\[1mm]
\frac{1}{\rho}\,\widetilde{C}(\tau_\sh\mu, \tau_\pr\pi)\partial_t \overline{\vec \sigma}_1\\[1mm]
\vdots\\[1mm]
\frac{1}{\rho}\,\widetilde{C}(\tau_\sh\mu, \tau_\pr\pi)\partial_t \overline{\vec \sigma}_L
\end{pmatrix}
=
\begin{pmatrix}
\Div \big(\sum_{l=0}^{L}\overline{\vec\sigma}_l\big) \\[1mm] \varepsilon(\overline{\vec v})\\[1mm] \vdots\\[1mm] \varepsilon(\overline{\vec v})\end{pmatrix}-
\begin{pmatrix}
\vec 0\\[1mm]
\vec 0\\[1mm]
\frac{1}{\rho\,\tau_{\Sig,1}}\,\widetilde{C}\big(\tau_\sh\mu, \tau_\pr\pi\big)\overline{\vec\sigma}_1\\[1mm]
\vdots\\[1mm]
\frac{1}{\rho\,\tau_{\Sig,L}}\,\widetilde{C}\big(\tau_\sh\mu, \tau_\pr\pi\big)\overline{\vec\sigma}_L
\end{pmatrix}\\
-V'(\vec p)\widehat{\vec p}\left[  \begin{pmatrix}
\partial_t \vec v\\[1mm]
\partial_t \vec \sigma_0\\[1mm]
\partial_t \vec \sigma_1\\[1mm]
\vdots\\[1mm]
\partial_t \vec \sigma_L
\end{pmatrix} 
+
\begin{pmatrix}
\vec 0\\[1mm]
\vec 0\\[1mm]
\frac{1}{\tau_{\Sig,1}}\,\vec\sigma_1\\[1mm]
\vdots\\[1mm]
\frac{1}{\tau_{\Sig,L}}\,\vec\sigma_L
\end{pmatrix}
\right]
\end{multline*}
which is equivalent to \eqref{FWI_first} in view of \eqref{C2:elawave2_trans},  \eqref{C2:elawave3_trans}, \eqref{C'}, and \eqref{V'}.
\end{proof}
\begin{theorem}\label{th:FWI_adjoint}
The assumptions are as in Theorem~{\rm\ref{th:FWI_first}}. Then, 
the adjoint $\Phi'(\vec p)^*\in \mathcal{L}\big( L^2([0,T],X),( L^\infty(D)^5)'\big)$ at $\vec p=(\rho,v_{\sh}, \tau_{\sh}, v_{\pr},\tau_{\pr})\in \mathsf{D}(\Phi)$ is given by 
$$
\Phi'(\vec p)^*\vec g = 
\begin{pmatrix}
\int_0^T \big(\partial_t\vec v \cdot {\vec w}  - \frac{1}{\rho}\,\vec\varepsilon(\vec v):( {\vec \varphi}_0 +\vec\Sigma)\big)\d t \\[2mm]
 \frac{2}{v_\sh} \int_0^T\big(-\vec\varepsilon(\vec v): ( {\vec \varphi}_0 +\vec\Sigma)+\pi\trace (\vec\Sigma^v)\Div\vec v\big)  \d t\\[2mm]
\frac{1}{1+\alpha\tau_\sh}\int_0^T \big( \vec\varepsilon(\vec v):\vec\Sigma^\tau_{\sh,2} +\pi\trace( \vec\Sigma^\tau_{\sh,1})\Div\vec v\big) \d t
\\[2mm]
-\frac{2\pi}{v_\pr}\int_0^T \trace(\vec\Sigma^v) \Div\vec v\, \d t
\\[2mm]
\frac{\pi}{1+\alpha\tau_\pr}\int_0^T \trace(\vec\Sigma^\tau_\pr) \Div\vec v\, \d t
\end{pmatrix} \in L^1(D)^5
$$
for $\vec g =(\vec g_{-1},\vec g_0,\dotsc,\vec g_L)\in L^2\big([0,T],L^2(D,\RR^3)\times L^2(D,\RS)^{1+L}\big)$
where  $\vec v$ is the first component of the solution of \eqref{C2:elawave_trans}, $\vec\Sigma =\sum_{l=1}^L {\vec \varphi}_l$, and
\begin{align*}
\vec\Sigma^v&=\frac{1}{3\pi-4\mu}\, {\vec \varphi}_0 + 
\frac{\tau_\pr}{3\tau_\pr\pi-4\tau_\sh\mu} \,\vec\Sigma,\\[1mm]
\vec\Sigma^\tau_{\sh,1}&=-\frac{\alpha}{3\pi-4\mu}\, {\vec \varphi}_0 
+\frac{\tau_\pr}{\tau_\sh(3\tau_\pr\pi-4\tau_\sh\mu)}\,\vec\Sigma,\quad
\vec\Sigma^\tau_{\sh,2}=\alpha\, {\vec \varphi}_0 
-\frac{1}{\tau_\sh}\,\vec\Sigma,\\[1mm]
\vec\Sigma^\tau_\pr&=\frac{\alpha}{3\pi-4\mu}\, {\vec \varphi}_0 
-\frac{1}{3\tau_\pr\pi-4\tau_\sh\mu}\,\vec\Sigma,
\end{align*}
and $w=({\vec w},  {\vec \varphi}_0,\dotsc, {\vec \varphi}_L)\in \mathcal{C}([0,T],X)$ uniquely solves
\begin{subequations}
      \label{C2:elawave_trans_ad}
    \begin{align}
      \partial_t \vec w&  = \frac{1}{\rho}\Div \Big(\sum_{l=0}^{L}{\vec\varphi}_l\Big)+\frac{1}{\rho}\,\vec g_{-1},
        \label{C2:elawave1_trans_ad}\\
        \partial_t {\vec\varphi}_0 &
        = C\big(\mu_{0},\pi_{0}  \big)\big(\varepsilon(\vec w)+\vec g_0\big),\label{C2:elawave2_trans_ad}\\
          \partial_t \vec\varphi_l &
          =C\big(\tau_\sh\mu_0, \tau_\pr\pi_0\big)\big(\varepsilon(\vec w)+\vec g_l\big) +\frac{1}{\tsigl}\,{\vec\varphi}_l,\qquad l=1,\dotsc,L,
            \label{C2:elawave3_trans_ad}
    \end{align}
    \end{subequations}
    with $w(T)=0$.
\end{theorem}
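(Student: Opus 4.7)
The proof combines the chain rule with Theorem \ref{th:adjoint}. Since $\Phi = F \circ V$, the chain rule gives $\Phi'(\vec p)^* = V'(\vec p)^* \circ F'(V(\vec p))^*$, and Theorem \ref{th:adjoint} yields the representation $[F'(V(\vec p))^* \vec g] H = \int_0^T \langle H(u'(t)+Qu(t)), w(t) \rangle_X \, dt$ for $H \in \mathcal{L}^*(X)$, where $w$ solves the backward adjoint equation $Bw' - A^* w - Q^* B w = \vec g$, $w(T) = 0$. The task therefore splits into three parts: translating this abstract equation into the concrete system \eqref{C2:elawave_trans_ad}, evaluating $H(u'+Qu)$ for $H = V'(\vec p)\widehat{\vec p}$, and identifying the resulting linear functional of $\widehat{\vec p}$ as a pairing with an element of $L^1(D)^5$.

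For the first part, the skew-symmetry of $A$ established in the preceding lemma yields $A^* = -A$, while a direct computation shows that $Q$ is self-adjoint in $X$ (it acts only by scalar multiplications $1/\tsigl$ on the symmetric components), so $Q^* = Q$. Writing $Bw' + Aw - QBw = \vec g$ component by component and multiplying each stress row by the appropriate map $C(\mu_0, \pi_0)$ or $C(\tau_\sh \mu_0, \tau_\pr \pi_0)$ reproduces \eqref{C2:elawave_trans_ad} exactly, including the sign flip $+\vec\varphi_l/\tsigl$ relative to the forward equation.

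For the second part, I exploit that the forward equations \eqref{C2:elawave_trans} give $\partial_t \vec \sigma_0 = \rho\, C(\mu, \pi) \vec \varepsilon(\vec v)$ and $\partial_t \vec \sigma_l + \vec \sigma_l/\tsigl = \rho\, C(\tau_\sh \mu, \tau_\pr \pi) \vec \varepsilon(\vec v)$ for $l \ge 1$, with $\mu = \mu_0/\rho$ and $\pi = \pi_0/\rho$. Substituting this into \eqref{V'} and invoking the derivative formula \eqref{C'}, the stress slots of $H(u'+Qu)$ collapse to $-(\widehat\rho/\rho)\vec \varepsilon(\vec v) - \widetilde{C}(\mu,\pi) C(\widetilde \mu, \widetilde \pi) \vec \varepsilon(\vec v)$ in the $\vec \sigma_0$ component and to the analogous expression with arguments $(\tau_\sh \mu, \tau_\pr \pi)$, $(\widehat \mu, \widehat \pi)$ in each $\vec \sigma_l$ slot. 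Pairing with $w = (\vec w, \vec \varphi_0, \ldots, \vec \varphi_L)$ in $\langle \cdot, \cdot \rangle_X$, using self-adjointness of $\widetilde{C}$ and $C$ with respect to the Frobenius inner product, and summing over $l$ produces the combination $\vec \Sigma = \sum_{l=1}^L \vec \varphi_l$; the $\widehat \rho$ contribution immediately yields the first entry $\int_0^T (\partial_t \vec v \cdot \vec w - \frac{1}{\rho} \vec \varepsilon(\vec v):(\vec \varphi_0 + \vec \Sigma)) \, dt$ of $\Phi'(\vec p)^*\vec g$.

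The third part is algebraic. Using the spectral decomposition $C(m,p) \vec M = 2m\, \dev(\vec M) + (3p - 4m)(\trace \vec M)/3 \cdot \Id$, the composition $\widetilde{C}(m,p) C(\widehat m, \widehat p)$ diagonalizes on the deviatoric/spherical split, so the integrand decomposes into a linear combination of $\vec \varepsilon(\vec v) : \vec \varphi_l$ and $\Div(\vec v)\trace(\vec \varphi_l)$ with coefficients rational in $\mu,\pi,\widetilde\mu,\widetilde\pi,\widehat\mu,\widehat\pi$. Substituting \eqref{tilde} and \eqref{hat} to express these in terms of $\widehat v_\sh, \widehat \tau_\sh, \widehat v_\pr, \widehat \tau_\pr$ and collecting coefficients produces the four remaining entries of the theorem: the ratios $1/(3\pi - 4\mu)$ and $\tau_\pr/(3\tau_\pr \pi - 4\tau_\sh \mu)$ from the spherical part assemble precisely into $\vec \Sigma^v, \vec \Sigma^\tau_{\sh,1}, \vec \Sigma^\tau_{\sh,2}, \vec \Sigma^\tau_\pr$. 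The main obstacle is the algebraic bookkeeping: one must keep separate the two distinct eigenvalue families $\{2\mu, 3\pi - 4\mu\}$ and $\{2\tau_\sh \mu, 3\tau_\pr \pi - 4\tau_\sh \mu\}$ acting on $\vec \varphi_0$ versus the memory stresses, verify that the cross-terms between deviatoric and trace parts collapse correctly, and check that the prefactors $2/v_\sh$, $1/(1+\alpha \tau_\sh)$, $2\pi/v_\pr$, $\pi/(1+\alpha \tau_\pr)$ in the statement emerge with the correct signs.
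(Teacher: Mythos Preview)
Your proposal is correct and follows essentially the same route as the paper: both invoke Theorem~\ref{th:adjoint} via the chain rule, identify the concrete adjoint system using $A^*=-A$, $Q^*=Q$ (the paper additionally notes $QB=BQ$, which you use implicitly when unpacking the stress rows), substitute the forward equations to replace $\partial_t\Sig_0$ and $\partial_t\Sig_l+\Sig_l/\tsigl$ by multiples of $\vec\varepsilon(\vec v)$, and then collect coefficients of the five components of $\widehat{\vec p}$. The only difference is organizational: in the algebraic step the paper expands $C(\widetilde\mu,\widetilde\pi)\vec\varepsilon(\vec v):\widetilde C(\mu,\pi)\vec\varphi_0$ directly from the explicit inverse formula~\eqref{C^-1}, whereas you diagonalize $C$ on the deviatoric/spherical decomposition first --- the same rational coefficients in $3\pi-4\mu$ and $3\tau_\pr\pi-4\tau_\sh\mu$ fall out either way.
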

\begin{remark}
Please note that $\Phi'(\vec p)^*$ actually  maps  into $ L^1(D)^5$ which is a subspace of $(L^\infty(D)^5)'$. This remark applies also  to the adjoints considered in Theorems~{\rm\ref{FWI''_adjoint1}} and~{\rm\ref{FWI''_adjoint2}} below.
\end{remark}
\begin{proof}[Proof of Theorem~{\rm\ref{th:FWI_adjoint}}]
Using $A^*=-A$ (skew-symmetry), $Q^*=Q$, and $QB=BQ$ we convince ourselves that \eqref{C2:elawave_trans_ad}
is the concrete version of the abstract equation \eqref{evolution_back}.
Further, by Theorem~\ref{th:adjoint}, 
\begin{align}
\big\langle \Phi'(\vec p)^*\vec g,\widehat{\vec p}\big\rangle_{( L^\infty(D)^5)'\times  L^\infty(D)^5}
&= \big\langle F'(V(\vec p))^*\vec g, V'(\vec p)\widehat{\vec p}\big\rangle_{\mathcal{L}(X)'\times \mathcal{L}(X)}
\nonumber\\[1mm]
&=\int_0^T\! \big\langle V'(\vec p)\widehat{\vec p}\big(u'(t) +Qu(t)\big),w(t)\rangle_X\,\d t\label{eq:aux_FWI_ad}
\end{align}
where $u=(\vec v,\vec \sigma_0,\dotsc,\vec \sigma_L)$ is the classical solution of \eqref{C2:elawave_trans}. We are now going to evaluate the  above integrand suppressing its $t$-dependence. Using \eqref{V'} and \eqref{C'} we find 
for $\widehat{\vec p}=(\widehat{\rho}, \widehat{v}_{\sh}, \widehat{\tau}_{\sh}, \widehat{v}_{\pr}, \widehat{\tau}_{\pr})$ that
\begin{equation}\label{sum}
\big\langle V'(\vec p)\widehat{\vec p}\big(u' +Qu\big),w\rangle_X =\int_D\big(\widehat{\rho}\, \partial_t\vec v \cdot {\vec w} 
+S_0+S_1+\cdots + S_L\big)\d x
\end{equation}
with 
$$
S_0= \Big[-\frac{\widehat{\rho}}{\rho^2}\widetilde{C}(\mu,\pi)\partial_t\Sig_0 -\frac{1}{\rho} \widetilde{C}(\mu,\pi)
C(\widetilde{\mu},\widetilde{\pi}) \widetilde{C}(\mu,\pi) \partial_t\Sig_0\Big]: {\vec \varphi}_0
$$
and, for $l=1,\dotsc,L$, 
\begin{multline*}
S_l=\Big[-\frac{\widehat{\rho}}{\rho^2}\widetilde{C}(\tau_\sh\mu,\tau_\pr\pi)\Big(\partial_t\Sig_l +\frac{\Sig_l}{\tau_{\Sig,l}}\Big)\\
-\frac{1}{\rho} \widetilde{C}(\tau_\sh\mu,\tau_\pr\pi)
C(\widehat{\mu},\widehat{\pi}) \widetilde{C}(\tau_\sh\mu,\tau_\pr\pi) \Big(\partial_t\Sig_l +\frac{\Sig_l}{\tau_{\Sig,l}}\Big)\Big]: {\vec \varphi}_l.
\end{multline*}
In view of \eqref{C2:elawave2_trans}   we may write
$$
S_0= \Big[-\frac{\widehat{\rho}}{\rho}\vec\varepsilon(\vec v) - \widetilde{C}(\mu,\pi)
C(\widetilde{\mu},\widetilde{\pi}) \vec\varepsilon(\vec v) \Big]: {\vec \varphi}_0 = 
-\frac{\widehat{\rho}}{\rho}\vec\varepsilon(\vec v):{\vec \varphi}_0 - 
C(\widetilde{\mu},\widetilde{\pi}) \vec\varepsilon(\vec v):\widetilde{C}(\mu,\pi) {\vec \varphi}_0
$$
and, similarly by \eqref{C2:elawave3_trans} ,
$$
S_l = 
-\frac{\widehat{\rho}}{\rho}\vec\varepsilon(\vec v):{\vec \varphi}_l - 
C(\widehat{\mu},\widehat{\pi}) \vec\varepsilon(\vec v):\widetilde{C}(\tau_\sh\mu,\tau_\pr\pi) {\vec \varphi}_l,
\quad l=1,\dotsc,L.
$$
Next, using  \eqref{C^-1}, we compute
\begin{align}
&C(\widetilde{\mu},\widetilde{\pi}) \vec\varepsilon(\vec v):\widetilde{C}(\mu,\pi) {\vec \varphi}_0\nonumber\\
&\qquad\quad= \big( 2\widetilde{\mu}\,  \vec\varepsilon(\vec v) + (\widetilde{\pi}-2\widetilde{\mu} )\Div\V\, \vec I\big):
\Big( \frac{1}{2\mu}\,{\vec \varphi}_0 + \frac{2\mu-\pi}{2\mu(3\pi-4\mu)}\,\trace({\vec \varphi}_0)\vec I\Big)\label{eq:aux_FWI_ad2}\\
&\qquad\quad= \widetilde{\mu}\Big(\frac{1}{\mu } \vec\varepsilon(\vec v):{\vec \varphi}_0
- \frac{\pi}{\mu(3\pi-4\mu)}\,\Div\V\, \trace({\vec \varphi}_0)\Big)+\frac{\widetilde{\pi}}{3\pi-4\mu} 
\,\Div\V\, \trace({\vec \varphi}_0)\nonumber
\end{align}
yielding
\begin{multline*}
S_0= -\frac{\widehat{\rho}}{\rho}\vec\varepsilon(\vec v):{\vec \varphi}_0\\ +
\widetilde{\mu}\Big(-\frac{1}{\mu } \vec\varepsilon(\vec v):{\vec \varphi}_0
+\frac{\pi}{\mu(3\pi-4\mu)}\,\Div\V\, \trace({\vec \varphi}_0)\Big)-\frac{\widetilde{\pi}}{3\pi-4\mu} 
\,\Div\V\, \trace({\vec \varphi}_0).
\end{multline*}
Analogously,
\begin{multline*}
S_l= -\frac{\widehat{\rho}}{\rho}\vec\varepsilon(\vec v):{\vec \varphi}_l\\ +
\widehat{\mu}\Big(-\frac{1}{\tau_\sh\mu } \vec\varepsilon(\vec v):{\vec \varphi}_l
+\frac{\tau_\pr\pi}{\tau_\sh\mu(3\tau_\pr\pi-4\tau_\sh\mu)}\,\Div\V\, \trace({\vec \varphi}_l)\Big)-\frac{\widehat{\pi}}{3\tau_\pr\pi-4\tau_\sh\mu} 
\,\Div\V\, \trace({\vec \varphi}_l). 
\end{multline*}
Next we group the terms in the sum \eqref{sum} belonging to the five components of $\widehat{\vec p}$. To this end 
we replace $\widetilde{\mu}$, $\widetilde{\pi}$, $\widehat{\mu}$, and $\widehat{\pi}$ by their respective expressions from 
\eqref{tilde} and \eqref{hat} which we slightly rewrite introducing $\mu$ and $\pi$:
\begin{alignat}{2}
\widetilde{\mu} &= \frac{2\mu}{v_\sh}\, \widehat{v}_{\sh} - \frac{\alpha\,\mu}{1+\tau_{\sh}\alpha}\, \widehat{\tau}_{\sh},&\qquad
\widetilde{\pi} &=\frac{2\pi}{v_{\pr}}\, \widehat{v}_{\pr} - \frac{\alpha\,\pi}{1+\tau_{\pr}\alpha}\, \widehat{\tau}_{\pr},
\label{eq:aux_FWI_ad3}\\[2mm]
\widehat{\mu} &= \frac{2\tau_\sh\,\mu}{v_{\sh}}\, \widehat{v}_{\sh} + \frac{\mu}{1+\tau_{\sh}\alpha}\, \widehat{\tau}_{\sh},&\qquad
\widehat{\pi} &=\frac{2\tau_\pr\,\pi}{v_{\pr}}\, \widehat{v}_{\pr} +\frac{\pi}{1+\tau_{\pr}\alpha}\, \widehat{\tau}_{\pr}.
\label{eq:aux_FWI_ad4}
\end{alignat}
After some algebra we get
\begin{multline*}
\big\langle V'(\vec p)\widehat{\vec p}\big(u' +Qu\big),\overline{u}\rangle_X = \int_D\Big[\widehat{\rho}\Big(\partial_t\vec v \cdot {\vec w}  - \frac{1}{\rho}\,\vec\varepsilon(\vec v):({\vec \varphi}_0 +\vec\Sigma)\Big)\\ +
\widehat{v}_\sh\,\frac{2}{v_\sh}\Big( -\,\vec\varepsilon(\vec v): ( {\vec \varphi}_0 +\vec\Sigma)+\pi\trace (\vec\Sigma^v)\Div\vec v \Big)\\+ \frac{\widehat{\tau}_\sh}{1+\alpha\tau_\sh}\Big( \vec\varepsilon(\vec v):\vec\Sigma^\tau_{\sh,2} +\pi\trace( \vec\Sigma^\tau_{\sh,1})\Div\vec v  \Big) \\
-\widehat{v}_\pr\,\frac{2\pi}{v_\pr}\trace(\vec\Sigma^v) \Div\vec v +\widehat{\tau}_\pr\,\frac{\pi}{1+\alpha\tau_\pr}\trace(\vec\Sigma^\tau_\pr) \Div\vec v\Big]\d x \qquad
\end{multline*}
which ends the proof.
\end{proof}
\subsection{Second order differentiability}
The second derivative of $\Phi$ is given by
\begin{equation}\label{Phi''}
\Phi''(\vec p)[\widehat{\vec p}_1,\widehat{\vec p}_2]=F''(V(\vec p))[V'(\vec p)\widehat{\vec p}_1,V'(\vec p)\widehat{\vec p}_2]
+ F'(V(\vec p))V''(\vec p)[\widehat{\vec p}_1,\widehat{\vec p}_2]
\end{equation}
using the chain and product rules, see, e.g., \cite[Section~4.3]{zeidler1}. In a first step we need to find~$V''$. 
Differentiating \eqref{V'} at $\vec p=(\rho,v_{\sh}, \tau_{\sh}, v_{\pr},\tau_{\pr})\in   \operatorname{int}(\mathsf{D}(\Phi))$ we obtain
\begin{multline}\label{V''}
V''(\vec p)[\widehat{\vec p}_1,\widehat{\vec p}_2]\!
\begin{pmatrix}\!
\vec w\\[1mm]
\vec\psi_0\\[1mm]
\vdots\\[1mm]
\vec\psi_L
\!\end{pmatrix} =\\
\begin{pmatrix}
\vec 0\\[1mm]
\Big(\frac{\widehat{\rho}_1\widehat{\rho}_2}{\rho^3} \widetilde{C}(\mu,\pi)
-\frac{\widehat{\rho}_1}{\rho^2}\widetilde{C}'(\mu,\pi)\begin{bmatrix}
\widetilde{\mu}_2\\[1mm]
\widetilde{\pi}_2
\end{bmatrix} -\frac{\widehat{\rho}_2}{\rho^2} \widetilde{C}'(\mu,\pi)\!
\begin{bmatrix}
\widetilde{\mu}_1\\[1mm]
\widetilde{\pi}_1
\end{bmatrix}\! +\frac{1}{\rho} \widetilde{C}''(\mu,\pi)\!
\begin{bmatrix}
\widetilde{\mu}_1\\[1mm]
\widetilde{\pi}_1
\end{bmatrix}
\begin{bmatrix}
\widetilde{\mu}_2\\[1mm]
\widetilde{\pi}_2
\end{bmatrix}\Big)\vec\psi_0
\\[6mm]
\Big(\frac{\widehat{\rho}_1\widehat{\rho}_2}{\rho^3} \widetilde{C}(\tau_\sh\mu,\tau_\pr\pi)
-\frac{\widehat{\rho}_1}{\rho^2}\widetilde{C}'(\tau_\sh\mu,\tau_\pr\pi)\begin{bmatrix}
\widehat{\mu}_2\\[1mm]
\widehat{\pi}_2
\end{bmatrix} -\frac{\widehat{\rho}_2}{\rho^2} \widetilde{C}'(\tau_\sh\mu,\tau_\pr\pi)\!
\begin{bmatrix}
\widehat{\mu}_1\\[1mm]
\widehat{\pi}_1
\end{bmatrix}\! \\[4mm] \qquad\qquad\qquad \qquad\qquad\qquad\qquad\qquad\qquad\qquad+\frac{1}{\rho} \widetilde{C}''(\tau_\sh\mu,\tau_\pr\pi)\!
\begin{bmatrix}
\widehat{\mu}_1\\[1mm]
\widehat{\pi}_1
\end{bmatrix}
\begin{bmatrix}
\widehat{\mu}_2\\[1mm]
\widehat{\pi}_2
\end{bmatrix}\Big)\vec\psi_1\\
\vdots\\[1mm]
\Big(\frac{\widehat{\rho}_1\widehat{\rho}_2}{\rho^3} \widetilde{C}(\tau_\sh\mu,\tau_\pr\pi)
-\frac{\widehat{\rho}_1}{\rho^2}\widetilde{C}'(\tau_\sh\mu,\tau_\pr\pi)\begin{bmatrix}
\widehat{\mu}_2\\[1mm]
\widehat{\pi}_2
\end{bmatrix} -\frac{\widehat{\rho}_2}{\rho^2} \widetilde{C}'(\tau_\sh\mu,\tau_\pr\pi)\!
\begin{bmatrix}
\widehat{\mu}_1\\[1mm]
\widehat{\pi}_1
\end{bmatrix}\! \\[4mm] \qquad\qquad\qquad \qquad\qquad\qquad\qquad\qquad\qquad\qquad+\frac{1}{\rho} \widetilde{C}''(\tau_\sh\mu,\tau_\pr\pi)\!
\begin{bmatrix}
\widehat{\mu}_1\\[1mm]
\widehat{\pi}_1
\end{bmatrix}
\begin{bmatrix}
\widehat{\mu}_2\\[1mm]
\widehat{\pi}_2
\end{bmatrix}\Big)
\vec \psi_L
\end{pmatrix}
\end{multline}
for   
$\widehat{\vec p}_i=(\widehat{\rho}_i, \widehat{v}_{\sh,i}, \widehat{\tau}_{\sh,i}, \widehat{v}_{\pr,i}, \widehat{\tau}_{\pr,i})\in 
L^\infty(D)^5$, $i=1,2$. Further, $\widetilde{\mu}_i$, $\widetilde{\pi}_i$, and $\widehat{\mu}_i$,  $\widehat{\pi}_i$ are defined as 
in \eqref{tilde} and \eqref{hat}, respectively, plugging in the respective components of $\widehat{\vec p}_i$. We close the expression 
for $V''$ by 
\begin{multline}\label{C''}
\widetilde{C}''(m,p)\!
\begin{bmatrix}
\widehat{m}_1\\[1mm]
\widehat{p}_1
\end{bmatrix}
\begin{bmatrix}
\widehat{m}_2\\[1mm]
\widehat{p}_2
\end{bmatrix} =
\widetilde{C}(m,p)\circ {C}(\widehat{m}_1,\widehat{p}_1)\circ \widetilde{C}(m,p)\circ {C}(\widehat{m}_2,\widehat{p}_2)\circ \widetilde{C}(m,p)\\
+\widetilde{C}(m,p)\circ {C}(\widehat{m}_2,\widehat{p}_2)\circ \widetilde{C}(m,p)\circ {C}(\widehat{m}_1,\widehat{p}_1)\circ \widetilde{C}(m,p).
\end{multline}
The proof of \eqref{C''} requires straightforward but lengthy calculations.
\begin{theorem}\label{th:FWI_second}
Let $\vec f$ be in $W^{3,1}([0,T],L^2(D,\RR^3))$ with $\vec f(0)=\vec f'(0)=\vec f''(0)=0$. Further, let $u_0=0$ and adopt the assumptions and notation made in this section. 

Then, the {\red full waveform forward operator} $\Phi$ is  twice Fr\'echet differentiable at any interior point 
$\vec p=(\rho,v_{\sh}, \tau_{\sh}, v_{\pr},\tau_{\pr})$ of $\mathsf{D}(\Phi)$: For 
$\widehat{\vec p}_i=(\widehat{\rho}_i, \widehat{v}_{\sh,i}, \widehat{\tau}_{\sh.i}, \widehat{v}_{\pr,i}, \widehat{\tau}_{\pr,i})\in 
L^\infty(D)^5$, $i=1,2$, we have  $\Phi''(\vec p)[\widehat{\vec p}_1,\widehat{\vec p}_2]= {v}+
\overline{\overline{u}}$ where ${v}=(\vec w,\vec \psi_0,\dotsc,\vec \psi_L)$ and 
$\overline{\overline{u}}=(\overline{\overline{\vec v}},\overline{\overline{\vec \sigma}}_0,\dotsc,\overline{\overline{\vec 
\sigma}}_L)$ are both in $\mathcal{C}([0,T],X)$. They are uniquely determined as mild solutions of the following viscoelastic 
equations.

The equations for $\overline{\overline{u}}$ are $\overline{\overline{u}}(0)=0$ and
\begin{align*}
\rho\,\partial_t \overline{ \overline{\vec v}} &= \Div\Big(\sum_{l=0}^L  \overline{\overline{\vec \sigma}}_l\Big)- \widehat{\rho}_1\, \partial_t \overline{\vec v}_2 - \widehat{\rho}_2\, \partial_t \overline{\vec v}_1,\\[1mm]
\partial_t  \overline{\overline{\vec \sigma}}_0 &= C(\mu_0,\pi_0)\vec \varepsilon( \overline{\overline{\vec v} }) + \big(\widehat{\rho}_1\,
C(\mu,\pi)+\rho\,C(\widetilde{\mu}_1,\widetilde{\pi}_1)\big) \vec\varepsilon( \overline{\vec v}_2)\\
&\qquad\quad+ \big(\widehat{\rho}_2\,
C(\mu,\pi)+\rho\,C(\widetilde{\mu}_2,\widetilde{\pi}_2)\big) \vec\varepsilon( \overline{\vec v}_1)
,\\[2mm]
\partial_t  \overline{\overline{\vec \sigma}}_l &=  C(\tau_\sh\mu_0,\tau_\pr\pi_0)\vec \varepsilon(\overline{ \overline{\vec v}})-
\frac{1}{\tau_{\Sig,l}}\,\overline{ \overline{\vec \sigma}}_l \\
&\qquad\quad+ \big(\widehat{\rho}_1\,
C(\tau_\sh\mu,\tau_\pr\pi)+\rho\,C(\widehat{\mu}_1,\widehat{\pi}_1)\big) \vec\varepsilon( \overline{\vec v}_2)\\
&\qquad\quad
+ \big(\widehat{\rho}_2\,
C(\tau_\sh\mu,\tau_\pr\pi)+\rho\,C(\widehat{\mu}_2,\widehat{\pi}_2)\big) \vec\varepsilon( \overline{\vec v}_1)
,\quad l=1,\dotsc,L,\nonumber
\end{align*}
with $\overline{\vec v}_i$ being the first component  of the solution of
\eqref{FWI_first} where the parameters $\widehat{\vec p}$ have to be replaced by $\widehat{\vec p}_i$, $i=1,2$.

The equations for $v$ are $v(0)=0$ and 
\begin{align*}
\rho\,\partial_t \vec w &= \Div\Big(\sum_{l=0}^L  \vec \psi_l\Big),\\[1mm]
\partial_t  \vec \psi_0 &= C(\mu_0,\pi_0)\vec \varepsilon(\vec w) -\Big( \frac{\widehat{\rho}_1\widehat{\rho}_2}{\rho^2}
C(\mu,\pi)+\widehat{\rho}_1C(\widetilde{\mu}_1,\widetilde{\pi}_1)+\widehat{\rho}_2C(\widetilde{\mu}_2,\widetilde{\pi}_2)\\
&\qquad\quad +\rho C(\widetilde{\mu}_1,\widetilde{\pi}_1)\widetilde{C}(\mu,\pi)C(\widetilde{\mu}_2,\widetilde{\pi}_2)
+\rho C(\widetilde{\mu}_2,\widetilde{\pi}_2)\widetilde{C}(\mu,\pi)C(\widetilde{\mu}_1,\widetilde{\pi}_1)\Big) 
\vec\varepsilon(\vec v),\\[2mm]
\partial_t  \vec \psi_l &=  C(\tau_\sh\mu_0,\tau_\pr\pi_0)\vec \varepsilon(\vec w)-\frac{1}{\tau_{\Sig,l}}\,\vec \psi_l-\Big( \frac{\widehat{\rho}_1\widehat{\rho}_2}{\rho^2}
C(\tau_\sh\mu,\tau_\pr\pi)+\widehat{\rho}_1C(\widehat{\mu}_1,\widehat{\pi}_1)+\widehat{\rho}_2C(\widehat{\mu}_2,\widehat{\pi}_2)\\
&\qquad\quad +\rho C(\widehat{\mu}_1,\widehat{\pi}_1)\widetilde{C}(\tau_\sh\mu,\tau_\pr\pi)C(\widehat{\mu}_2,\widehat{\pi}_2)
+\rho C(\widehat{\mu}_2,\widehat{\pi}_2)\widetilde{C}(\tau_\sh\mu,\tau_\pr\pi)C(\widehat{\mu}_1,\widehat{\pi}_1)\Big) 
\vec\varepsilon(\vec v),\nonumber
\end{align*}
 $l=1,\dotsc,L$, where $\vec v$ is the first component of the solution of \eqref{C2:elawave_trans}.
\end{theorem}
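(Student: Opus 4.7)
The plan is to derive both parts of the decomposition
$\Phi''(\vec p)[\widehat{\vec p}_1,\widehat{\vec p}_2]=F''(V(\vec p))[V'(\vec p)\widehat{\vec p}_1,V'(\vec p)\widehat{\vec p}_2] + F'(V(\vec p))V''(\vec p)[\widehat{\vec p}_1,\widehat{\vec p}_2]$ from \eqref{Phi''} and identify them with $\overline{\overline u}$ and $v$, respectively. First I would check the abstract hypotheses: the assumption $\vec f\in W^{3,1}([0,T],L^2(D,\RR^3))$ with $\vec f(0)=\vec f'(0)=\vec f''(0)=0$, together with $u_0=0$, translates directly to the requirements of Theorem~\ref{th:second} for $f=(\vec f,\vec 0,\dots,\vec 0)\in W^{3,1}([0,T],X)$. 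Also $V(\vec p)\in\operatorname{int}(\mathsf{D}(F))$ for interior $\vec p$ by Remark~\ref{rem_V}, so both $F'$ and $F''$ at $V(\vec p)$ are available.

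For the $\overline{\overline u}$ term I would apply Theorem~\ref{th:second} with $B=V(\vec p)$, $H_i=V'(\vec p)\widehat{\vec p}_i$. The auxiliary functions $u_1,u_2$ in Theorem~\ref{th:second} then coincide with the solutions of \eqref{FWI_first} for $\widehat{\vec p}_1,\widehat{\vec p}_2$; this identification is exactly the content of Theorem~\ref{th:FWI_first}, so I can write $u_i=(\overline{\vec v}_i,\overline{\vec\sigma}_{0,i},\dots,\overline{\vec\sigma}_{L,i})$. Expanding the abstract right-hand side of \eqref{evolution3} by substituting \eqref{V'} for $H_i$ and using \eqref{C2:elawave2_trans}, \eqref{C2:elawave3_trans} to rewrite the $\partial_t\Sig_l$ terms as $\vec\varepsilon(\vec v)$, one obtains the concrete system for $\overline{\overline u}$ stated in the theorem. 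This is exactly the same type of calculation as in the proof of Theorem~\ref{th:FWI_first}, now performed twice (once for each $H_i$ applied to $u_j'+Qu_j$).

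For the $v$ term I would apply Theorem~\ref{th:first} with the perturbation $H=V''(\vec p)[\widehat{\vec p}_1,\widehat{\vec p}_2]$, giving $v=F'(V(\vec p))H$ as the mild solution of \eqref{evolution2} with right-hand side $-H(u'+Qu)$. Plugging in \eqref{V''}, the action of $H$ on $u'+Qu=(\partial_t\vec v,\partial_t\Sig_0,\partial_t\Sig_1+\Sig_1/\tau_{\Sig,1},\dots)$ is then expanded using \eqref{C'} and \eqref{C''}. Again invoking \eqref{C2:elawave2_trans} and \eqref{C2:elawave3_trans} to replace $\partial_t\Sig_0$ by $C(\mu_0,\pi_0)\vec\varepsilon(\vec v)=\rho\,C(\mu,\pi)\vec\varepsilon(\vec v)$ and analogously for $l\geq 1$, the composition $\widetilde{C}(m,p)\circ C(\cdot,\cdot)\circ\widetilde{C}(m,p)\circ C(\cdot,\cdot)\circ\widetilde{C}(m,p)$ collapses neatly under one $\widetilde{C}\circ$-factor after moving back to the $B$-weighted side of the equation, and one recovers exactly the concrete system stated for $v$. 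The first component equation $\rho\partial_t\vec w=\Div(\sum_l\vec\psi_l)$ has no source term precisely because $V''$ has a vanishing first row in \eqref{V''}.

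The main obstacle is the algebra for the $v$ system: one has to handle \eqref{C''} together with \eqref{V''} and make two consecutive $\widetilde C$--$C$ cancellations survive simplifications induced by the state equation. It is essentially bookkeeping (and exactly the same mechanism that produced \eqref{FWI_first2}--\eqref{FWI_first3}), but the number of additive terms is much larger, so care is needed to collect the terms corresponding to $\widehat{\rho}_1\widehat{\rho}_2/\rho^2$, the two $\widehat{\rho}_i$ cross-terms, and the two symmetrized $\rho\,C(\cdot)\widetilde{C}(\cdot)C(\cdot)$ terms. Finally, adding the two contributions yields $\Phi''(\vec p)[\widehat{\vec p}_1,\widehat{\vec p}_2]=v+\overline{\overline u}$, and the regularity $v+\overline{\overline u}\in\mathcal{C}([0,T],X)$ is inherited from Theorems~\ref{th:first} and~\ref{th:second}.
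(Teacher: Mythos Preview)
Your proposal is correct and follows the paper's proof essentially step for step: decompose via \eqref{Phi''}, identify $\overline{\overline u}=F''(V(\vec p))[V'(\vec p)\widehat{\vec p}_1,V'(\vec p)\widehat{\vec p}_2]$ through Theorem~\ref{th:second} and $v=F'(V(\vec p))V''(\vec p)[\widehat{\vec p}_1,\widehat{\vec p}_2]$ through Theorem~\ref{th:first}, and then translate the abstract right-hand sides into concrete form using \eqref{V'}, \eqref{V''}, \eqref{C'}, \eqref{C''} together with the state equations. One small imprecision: in the $\overline{\overline u}$ part the right-hand side of \eqref{evolution3} involves $u_j'+Qu_j$, so the stress derivatives that appear are $\partial_t\overline{\boldsymbol\sigma}_{l,j}$, not $\partial_t\boldsymbol\sigma_l$; accordingly the relations you need there are \eqref{FWI_first2}--\eqref{FWI_first3} (the linearized state equations for $u_j$), not \eqref{C2:elawave2_trans}--\eqref{C2:elawave3_trans}. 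The paper's own proof phrases this simply as ``two coupled equations of type \eqref{FWI_first}'' and is equally brief on this point, so your outline matches the paper's intent.
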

\begin{proof}
By \eqref{Phi''},  $\Phi''(\vec p)[\widehat{\vec p}_1,\widehat{\vec p}_2]= v+
\overline{\overline{u}}$ where 
$$
v:=  F'(V(\vec p))V''(\vec p)[\widehat{\vec p}_1,\widehat{\vec p}_2]\quad\text{and}\quad
\overline{\overline{u}}:=F''(V(\vec p))[V'(\vec p)\widehat{\vec p}_1,V'(\vec p)\widehat{\vec p}_2].
$$
We apply Theorems~\ref{th:first} and \ref{th:second} to specify the equations for $v$ and
$\overline{\overline{u}}$, respectively. 

We start with $\overline{\overline{u}}$ which is determined by two coupled equations of type \eqref{FWI_first}. These equations only differ in the plugged in parameters and right hand sides.

Theorem~\ref{th:first} yields the following system for $v$: 
\begin{multline*}
\begin{pmatrix}
{\rho}\,\partial_t \vec w\\[1mm]
\frac{1}{\rho}\,\widetilde{C}(\mu,\pi )\partial_t \vec \psi_0\\[1mm]
\frac{1}{\rho}\,\widetilde{C}(\tau_\sh\mu, \tau_\pr\pi)\partial_t \vec \psi_1\\[1mm]
\vdots\\[1mm]
\frac{1}{\rho}\,\widetilde{C}(\tau_\sh\mu, \tau_\pr\pi)\partial_t \vec \psi_L
\end{pmatrix}
=
\begin{pmatrix}
\Div \big(\sum_{l=0}^{L}{\vec\psi}_l\big) \\[1mm] \vec\varepsilon(\vec w)\\[1mm] \vdots\\[1mm]\vec \varepsilon(\vec w)\end{pmatrix}-
\begin{pmatrix}
\vec 0\\[1mm]
\vec 0\\[1mm]
\frac{1}{\rho\,\tau_{\Sig,1}}\,\widetilde{C}\big(\tau_\sh\mu, \tau_\pr\pi\big)\vec\psi_1\\[1mm]
\vdots\\[1mm]
\frac{1}{\rho\,\tau_{\Sig,L}}\,\widetilde{C}\big(\tau_\sh\mu, \tau_\pr\pi\big)\vec\psi_L
\end{pmatrix}\\
-V''(\vec p)[\widehat{\vec p}_1,\widehat{\vec p}_2]\left[  \begin{pmatrix}
\partial_t \vec v\\[1mm]
\partial_t \vec \sigma_0\\[1mm]
\partial_t \vec \sigma_1\\[1mm]
\vdots\\[1mm]
\partial_t \vec \sigma_L
\end{pmatrix} 
+
\begin{pmatrix}
\vec 0\\[1mm]
\vec 0\\[1mm]
\frac{1}{\tau_{\Sig,1}}\,\vec\sigma_1\\[1mm]
\vdots\\[1mm]
\frac{1}{\tau_{\Sig,L}}\,\vec\sigma_L
\end{pmatrix}
\right]\!.
\end{multline*}
Applying \eqref{C2:elawave2_trans},  \eqref{C2:elawave3_trans}, \eqref{C'},  \eqref{V''}, and \eqref{C''} leads to the equations for
$v$.
\end{proof}
\subsection{An additional adjoint}
As explained in the introduction, second-degree Newton solvers might resolve the cross-talk effect.
In our group we plan to implement a variant of the second-degree Newton method for  viscolesatic FWI. There one needs to solve  a linear system containing 
the operator $\Phi''(\vec p)[\widehat{\vec p},\cdot]$. Our regularization method of choice is the conjugate gradient iteration
which needs the adjoint operator. In this subsection we derive an explicit expression for it.

Recall from \eqref{Phi''} that 
\begin{equation}\label{Phi''_2}
\Phi''(\vec p)[\widehat{\vec p},\cdot]=F''(V(\vec p))[V'(\vec p)\widehat{\vec p},V'(\vec p)\,\cdot]
+ F'(V(\vec p))V''(\vec p)[\widehat{\vec p},\cdot].
\end{equation}
In a first step we therefore consider $F''(B)[H,\cdot]\colon \mathcal{L}^*(X)\to L^2([0,T],X)$ for  $B\in\mathsf{D}(F)$ and 
$H\in \mathcal{L}^*(X)$.
\begin{theorem}\label{th:adjoint2}
Under the assumptions of Theorem~{\rm\ref{th:second}} we have
$$ \big[F''({B})[H_1,\cdot]^\ast g\big]H_2= \int_0^T\!\!\big\langle H_1({u}_2^\prime(t)+Q{u}_2(t))
+ H_2({u}_1^\prime(t)+Q{u}_1(t)),w(t)\big\rangle_X\d t
$$
for $g\in L^2([0,T],X)$,  $H_i\in\mathcal{L}^*(X)$, where  ${u}_i=F'(B)H_i$, $i=1,2$, is the solution  of~\eqref{evolution4}. Further,
$w\in \mathcal{C}([0,T],X)$ is the mild solution of the {\red adjoint} evolution equation
\begin{equation*} 
{B}w^\prime(t) - A^\ast w(t) -Q^*Bw(t)=g(t),\ t\in\,[0,T],\quad w(T)=0.
\end{equation*}
\end{theorem}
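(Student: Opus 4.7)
The plan is to mimic the derivation of Theorem~\ref{th:adjoint}, but now applied to the evolution equation \eqref{evolution3} which characterizes $F''(B)[H_1,H_2]=\overline{\overline{u}}$. By definition of the Hilbert space adjoint,
$$
\bigl[F''(B)[H_1,\cdot]^\ast g\bigr]H_2 = \bigl\langle g,\overline{\overline{u}}\bigr\rangle_{L^2([0,T],X)},
$$
so it suffices to show that the right-hand side equals the claimed time integral whenever $w$ solves the adjoint equation. First I would pair both sides of \eqref{evolution3} with $w(t)$ in $X$ and integrate over $[0,T]$, obtaining
$$
\int_0^T \bigl\langle B\overline{\overline{u}}\,'(t),w(t)\bigr\rangle_X\d t
+\int_0^T \bigl\langle (A+BQ)\overline{\overline{u}}(t),w(t)\bigr\rangle_X\d t
=-\int_0^T \bigl\langle H_1(u_2'+Qu_2)+H_2(u_1'+Qu_1),w\bigr\rangle_X\d t.
$$

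Next I would integrate the first term by parts in time. Since $\overline{\overline{u}}(0)=0$ and $w(T)=0$, the boundary terms vanish. Using $B^*=B$, the first term becomes $-\int_0^T \langle \overline{\overline{u}},Bw'\rangle_X\d t$. The spatial terms are transferred by the defining identities $\langle A\overline{\overline{u}},w\rangle=\langle \overline{\overline{u}},A^*w\rangle$ and $\langle BQ\overline{\overline{u}},w\rangle=\langle \overline{\overline{u}},Q^*Bw\rangle$ (the latter uses $B^*=B$). Collecting, the left-hand side equals
$$
-\int_0^T \bigl\langle \overline{\overline{u}}(t),\,Bw'(t)-A^*w(t)-Q^*Bw(t)\bigr\rangle_X\d t
=-\int_0^T \bigl\langle \overline{\overline{u}}(t),g(t)\bigr\rangle_X\d t
$$
by the defining equation for $w$, and the desired formula follows.

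The main obstacle is justifying the integration by parts and the transfer of $A$ to $A^*$ when $w$ is only a mild solution for $g\in L^2([0,T],X)$ and $\overline{\overline{u}}$ is only guaranteed to be classical under the stronger hypotheses of Theorem~\ref{th:second}. The standard remedy is a density argument: approximate $g$ by a sequence $g_k\in W^{1,1}([0,T],X)$ with $g_k(T)=0$ for which the corresponding $w_k$ is a classical solution in $\mathcal{C}^1([0,T],X)\cap \mathcal{C}([0,T],\mathsf{D}(A^*))$, carry out the formal computation rigorously for $g_k$, and then pass to the limit using the continuous dependence estimate \eqref{cont_dep_on_data} applied to the adjoint equation (which has the same structure as \eqref{evolution} after the time reversal indicated in the remark following Theorem~\ref{th:adjoint}). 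Since $u_i'+Qu_i\in \mathcal{C}([0,T],X)$ by Theorem~\ref{th:second}, both sides depend continuously on $w$ in $\mathcal{C}([0,T],X)$, and the identity extends to arbitrary $g\in L^2([0,T],X)$. This yields the claim verbatim; the structure of the argument is identical to the proof of Theorem~\ref{th:adjoint} in \cite{Kirsch-Rieder16}, only the inhomogeneity on the right-hand side of the forward equation is different.
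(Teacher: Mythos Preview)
Your proposal is correct and follows essentially the same route as the paper's proof: compute $\langle \overline{\overline{u}},g\rangle_{L^2([0,T],X)}$ by pairing one of the two equations with the solution of the other, integrate by parts in time using $B^\ast=B$, $\overline{\overline{u}}(0)=0$, $w(T)=0$, and then extend from smooth $g$ to all of $L^2([0,T],X)$ by density. The only cosmetic differences are that the paper pairs the \emph{adjoint} equation with $\overline{\overline{u}}$ (you pair the \emph{forward} equation with $w$), and the paper assumes merely $g\in W^{1,1}([0,T],X)$ for the dense subclass, whereas your extra requirement $g_k(T)=0$ is harmless but unnecessary.
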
 
\begin{proof}
Since $\big[F''({B})[H_1,\cdot]^\ast g\big]H_2=\langle \overline{\overline{u}},g\rangle_{ L^2([0,T],X)}$ 
where $ \overline{\overline{u}}\in \mathcal{C}^1([0,T],X)$ solves \eqref{evolution3} we argue similar to the  proof of Theorem~3.8 in \cite{Kirsch-Rieder16}: 
{\red
Assume $g\in W^{1,1}([0,T],X)$. Then, $w\in \mathcal{C}^1([0,T],X)$. Using the selfadjointness of $B$ and integration by parts we compute
\begin{align*}
\langle \overline{\overline{u}},g\rangle_{ L^2([0,T],X)}& =\int_0^T\!\! \langle  \overline{\overline{u}}(t),g(t)\rangle_X\d t
= \int_0^T\!\! \langle  \overline{\overline{u}}(t),{B}w^\prime(t) - A^\ast w(t) -Q^*Bw(t)\rangle_X\d t\\[1mm]
&= -\int_0^T \!\! \langle   B\overline{\overline{u}}\,'(t)+A\overline{\overline{u}}(t)+BQ\overline{\overline{u}}(t),w(t)\rangle_X\d t
\\[1mm]
&=\int_0^T\!\!\big\langle H_1({u}_2^\prime(t)+Q{u}_2(t))
+ H_2({u}_1^\prime(t)+Q{u}_1(t)),w(t)\big\rangle_X\d t.
\end{align*}
The assertion follows since $W^{1,1}([0,T],X)$ is dense in  $L^2([0,T],X)$.
}\end{proof}
\begin{theorem}\label{FWI''_adjoint1}
Under the assumptions of Theorem~{\rm\ref{th:FWI_second}} we have that
the adjoint $$F''(V(\vec p))[V'(\vec p)\widehat{\vec p},V'(\vec p)\,\cdot]^*\in \mathcal{L}\big( L^2([0,T],X),( L^\infty(D)^5)'\big)$$ at $\vec p=(\rho,v_{\sh}, \tau_{\sh}, v_{\pr},\tau_{\pr})\in \mathsf{D}(\Phi)$ and $\widehat{\vec p}=(\widehat{\rho},\widehat{v}_{\sh}, \widehat{\tau}_{\sh}, \widehat{v}_{\pr},\widehat{\tau}_{\pr})\in L^\infty(D)^5$ is given by 
$$
F''(V(\vec p))[V'(\vec p)\widehat{\vec p},V'(\vec p)\cdot]^*\vec g= \vec p_1^*+\vec p_2^*
$$
for $\vec g =(\vec g_{-1},\vec g_0,\dotsc,\vec g_L)\in L^2\big([0,T],L^2(D,\RR^3)\times L^2(D,\RS)^{1+L}\big)$ 
where
$$
\vec p_1^* =
\begin{pmatrix}
\int_0^T \big(\partial_t {\vec v} \cdot {\vec z}  - \frac{1}{\rho}\,\vec\varepsilon({\vec  v}):( {\vec \eta}_0 +\vec\Sigma_\eta)\big)\d t \\[2mm]
 \frac{2}{v_\sh} \int_0^T\big(-\vec\varepsilon({\vec v}): ( {\vec \eta}_0 +\vec\Sigma_\eta)+\pi\trace (\vec\Sigma^v)\Div {\vec v}\big)  \d t\\[2mm]
\frac{1}{1+\alpha\tau_\sh}\int_0^T \big( \vec\varepsilon({\vec v}):\vec\Sigma^\tau_{\sh,2} +\pi\trace( \vec\Sigma^\tau_{\sh,1})\Div {\vec v}\big) \d t
\\[2mm]
-\frac{2\pi}{v_\pr}\int_0^T \trace(\vec\Sigma^v) \Div {\vec  v}\, \d t
\\[2mm]
\frac{\pi}{1+\alpha\tau_\pr}\int_0^T \trace(\vec\Sigma^\tau_\pr) \Div {\vec v}\, \d t
\end{pmatrix}\! \!\in\! L^1(D)^5
$$
and 
$$
\vec p_2^*= \begin{pmatrix}
\int_0^T \big(\partial_t {\vec v}_1 \cdot {\vec w}  - \frac{1}{\rho}\big[\vec\varepsilon(\vec v_1):(\vec\varphi_0+\vec\Sigma_\varphi) +
\vec\varepsilon(\vec v) :\vec\Gamma_1^\rho+\trace(\vec\Gamma_2^\rho)\Div\V\big]\big)\d t \\[2mm]
 \frac{2}{v_\sh} \int_0^T\big(-\vec\varepsilon(\vec v_1):(\vec\varphi_0+\vec\Sigma_\varphi) 
 -\vec\varepsilon(\vec v) :\vec\Gamma_{1}^\rho \\
 \qquad\qquad\qquad\qquad\qquad\qquad+\trace(\vec\Gamma^v_{\sh,1})\Div\V_1+\trace(\vec\Gamma^v_{\sh,2})\Div\V
 \big)  \d t\\[2mm]
\frac{1}{1+\alpha\tau_\sh}
\int_0^T\big(-\vec\varepsilon(\vec v_1):(\vec\varphi_0+\frac{1}{\tau_\sh}\vec\Sigma_\varphi) 
 -\vec\varepsilon(\vec v) :\vec\Gamma_{\sh,0}^\tau \\
 \qquad\qquad\qquad\qquad\qquad\qquad+\trace(\vec\Gamma^\tau_{\sh,1})\Div\V_1+\trace(\vec\Gamma^\tau_{\sh,2})\Div\V\big)  \d t\\[2mm]
-\frac{2\pi}{v_\pr}\int_0^T \big(\trace(\vec\Gamma^v_{\pr,1})\Div\V_1+\trace(\vec\Gamma^v_{\pr,2}) \Div\V\big) \d t
\\[2mm]
\frac{\pi}{1+\alpha\tau_\pr}\int_0^T  \big(\trace(\vec\Gamma^\tau_{\pr,1})\Div\V_1-\trace(\vec\Gamma^\tau_{\pr,2}) \Div\V\big) \d t
\end{pmatrix}\! \!\in\! L^1(D)^5.
$$
Above, ${\vec v}$ is the first component of the solution of \eqref{C2:elawave_trans}, 
$z=({\vec z},  {\vec \eta}_0,\dotsc, {\vec \eta}_L)$ solves \eqref{evolution_back} with  $g=-V'(\vec p)\widehat{\vec p}(w'+Qw)$ and $w=(\vec w,  {\vec \varphi}_0,\dotsc, {\vec \varphi}_L)$ solves \eqref{C2:elawave_trans_ad}.
Further, 
$\vec\Sigma_\eta=\sum_{l=1}^L\vec\eta_l$, $\vec\Sigma_\varphi=\sum_{l=1}^L\vec\varphi$, and  the quantities 
$\vec\Sigma^v$, $\vec\Sigma^\tau_{\sh,1}$, $\vec\Sigma^\tau_{\sh,2}$, and $\vec\Sigma^\tau_\pr$
are exactly those from Theorem~{\rm\ref{th:FWI_adjoint}}, however, with $\vec\eta_l$ in place of $\vec\varphi_l$.
Finally, $\vec v_1=(F'(V(\vec p)) V'(\vec p) \widehat{\vec p})_1$, that is,  $\vec v_1$ is the first component of the solution of \eqref{FWI_first}, and
\begin{align*}
\vec\Gamma^\rho_1 &= \Big(\frac{\widehat{\rho}}{\rho}+\frac{\widetilde{\mu}}{\mu}\Big)\vec\varphi_0
+ \Big(\frac{\widehat{\rho}}{\rho}+\frac{\widehat{\mu}}{\tau_\sh\mu}\Big)\vec\Sigma_\varphi,\quad
\vec\Gamma^\rho_2 = \frac{\mu\widetilde{\pi}-\widetilde{\mu}\pi}{\mu(3\pi-4\mu)}\,\vec\varphi_0
+\frac{\tau_\sh\mu\widehat{\pi}-\tau_\pr\widehat{\mu}\pi}{\tau_\sh\mu(3\tau_\pr\pi-4\tau_\sh\mu)}\,\vec\Sigma_\varphi,\\[1mm]
\vec\Gamma^v_{\sh,1}&= \frac{\pi}{3\pi-4\mu} \,\vec\varphi_0+\frac{\tau_\pr\pi}{3\tau_\pr\pi-4\tau_\sh\mu} \,\vec\Sigma_\varphi\\[1mm]
\vec\Gamma^v_{\sh,2}&= \Big(\frac{\widehat{\rho}}{\rho}\,\frac{\pi}{3\pi-4\mu}-\frac{3\widetilde{\mu}\pi^2-4\widetilde{\pi}\mu^2}{\mu(3\pi-4\mu)^2}\Big)\vec\varphi_0\\
&\qquad\qquad\qquad\qquad\qquad\qquad+ \Big(\frac{\widehat{\rho}}{\rho}\,\frac{\tau_\pr\pi}
{3\tau_\pr\pi-4\tau_\sh\mu}-\frac{3\widehat{\mu}\tau_\pr^2\pi^2-4\widehat{\pi}\tau_\sh^2\mu^2}{\tau_\sh\mu(3\tau_\pr\pi-4\tau_\sh\mu)^2}\Big)\vec\Sigma_\varphi,
\\[1mm]
\vec\Gamma^\tau_{\sh,0}&= -\alpha\Big(\frac{\widehat{\rho}}{\rho}+\frac{\widetilde{\mu}}{\mu}\Big) \vec\varphi_0
+  \Big(\frac{\widehat{\rho}}{\tau_\sh\rho}+\frac{\widehat{\mu}}{\tau_\sh^2\mu}\Big)\vec\Sigma_\varphi,\\[1mm]
\vec\Gamma^\tau_{\sh,1}&= -\alpha\frac{\pi}{3\pi-4\mu} \,\vec\varphi_0+\frac{\tau_\pr\pi}{\tau_\sh(3\tau_\pr\pi-4\tau_\sh\mu)} \,\vec\Sigma_\varphi,
\\[1mm]
\vec\Gamma^\tau_{\sh,2}&= -\alpha\Big(\frac{\widehat{\rho}}{\rho}\,\frac{\pi}{3\pi-4\mu}-\frac{3\widetilde{\mu}\pi^2-4\widetilde{\pi}\mu^2}{\mu(3\pi-4\mu)^2}\Big)\vec\varphi_0\\
&\qquad\qquad\qquad\qquad\qquad\qquad+ \Big(\frac{\widehat{\rho}}{\rho}\,\frac{\tau_\pr\pi}
{\tau_\sh(3\tau_\pr\pi-4\tau_\sh\mu)}-\frac{3\widehat{\mu}\tau_\pr^2\pi^2-4\widehat{\pi}\tau_\sh^2\mu^2}{\tau_\sh^2\mu(3\tau_\pr\pi-4\tau_\sh\mu)^2}\Big)\vec\Sigma_\varphi,
\\[1mm]
\vec\Gamma^v_{\pr,1}&= \frac{1}{3\pi-4\mu} \,\vec\varphi_0+\frac{\tau_\pr}{3\tau_\pr\pi-4\tau_\sh\mu} \,\vec\Sigma_\varphi,
\\[1mm]
\vec\Gamma^v_{\pr,2}&= \Big(\frac{\widehat{\rho}}{\rho}\,\frac{1}{3\pi-4\mu}+
\frac{3\widetilde{\pi}\pi^2-4\widetilde{\mu}\mu^2}{\mu^2(3\pi-4\mu)^2}\Big)\vec\varphi_0\\
&\qquad\qquad\qquad\qquad\qquad\qquad+\tau_\pr\Big(\frac{\widehat{\rho}}{\rho}\,\frac{1}{3\tau_\pr\pi-4\tau_\sh\mu}+\frac{3\widehat{\pi}\tau_\pr^2\pi^2-4\widehat{\mu}\tau_\sh\mu^2}{\tau_\sh^2\mu^2(3\tau_\pr\pi-4\tau_\sh\mu)^2}
\Big)\vec\Sigma_\varphi,\\[1mm]
\vec\Gamma^\tau_{\pr,1}&= \frac{\alpha}{3\pi-4\mu} \,\vec\varphi_0-\frac{1}{3\tau_\pr\pi-4\tau_\sh\mu} \,\vec\Sigma_\varphi,
\\[1mm]
\vec\Gamma^\tau_{\pr,2}&= \alpha\Big(\frac{\widehat{\rho}}{\rho}\,\frac{1}{3\pi-4\mu}+
\frac{3\widetilde{\pi}\pi^2-4\widetilde{\mu}\mu^2}{\mu^2(3\pi-4\mu)^2}\Big)\vec\varphi_0\\
&\qquad\qquad\qquad\qquad\qquad\qquad+\Big(\frac{\widehat{\rho}}{\rho}\,\frac{1}{3\tau_\pr\pi-4\tau_\sh\mu}+\frac{3\widehat{\pi}\tau_\pr^2\pi^2-4\widehat{\mu}\tau_\sh\mu^2}{\tau_\sh^2\mu^2(3\tau_\pr\pi-4\tau_\sh\mu)^2}
\Big)\vec\Sigma_\varphi.
\end{align*}
\end{theorem}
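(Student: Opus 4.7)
The plan is to compute $[F''(V(\vec p))[V'(\vec p)\widehat{\vec p},V'(\vec p)\cdot]]^*\vec g$ by taking the adjoint of each of the two terms in the expression for $F''(B)[H_1,H_2]$ provided by Theorem~\ref{th:adjoint2}. Setting $H_1:=V'(\vec p)\widehat{\vec p}$ and $H_2:=V'(\vec p)\widehat{\vec p}_2$, that theorem yields
\[
\big\langle F''(V(\vec p))[H_1,H_2],\vec g\big\rangle_{L^2([0,T],X)} \;=\; I_1+I_2,
\]
with $I_1:=\int_0^T\langle H_2(u_1'+Qu_1),w\rangle_X\,dt$, $I_2:=\int_0^T\langle H_1(u_2'+Qu_2),w\rangle_X\,dt$, $u_i=F'(V(\vec p))V'(\vec p)\widehat{\vec p}_i=\Phi'(\vec p)\widehat{\vec p}_i$ (so that the first component of $u_1$ is $\vec v_1$), and $w$ the mild solution of \eqref{C2:elawave_trans_ad} with source $\vec g$. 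The task is then to identify $I_1$ with $\langle\widehat{\vec p}_2,\vec p_2^*\rangle$ and $I_2$ with $\langle\widehat{\vec p}_2,\vec p_1^*\rangle$.

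For $I_1$ the $\widehat{\vec p}_2$-dependence is already explicit through $H_2$. I would mimic the pointwise tensor expansion from the proof of Theorem~\ref{th:FWI_adjoint} (cf.\ \eqref{sum}--\eqref{eq:aux_FWI_ad4}), but with the background state $u$ replaced by $u_1$. The identity $\widetilde{C}(\mu,\pi)\,C(\mu_0,\pi_0)=\rho\,I$ (obtained from $\mu_0=\rho\mu$, $\pi_0=\rho\pi$, and \eqref{C^-1}) again collapses all $\vec\varepsilon(\vec v_1)$-contributions into exactly the shape of Theorem~\ref{th:FWI_adjoint}, producing the $\vec v_1$-block of $\vec p_2^*$. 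The new phenomenon is that by \eqref{FWI_first2}--\eqref{FWI_first3} the stress time derivatives of $u_1$ carry the additional source terms $(\widehat{\rho}\,C(\mu,\pi)+\rho\,C(\widetilde{\mu},\widetilde{\pi}))\vec\varepsilon(\vec v)$ (for $l=0$) and $(\widehat{\rho}\,C(\tau_\sh\mu,\tau_\pr\pi)+\rho\,C(\widehat{\mu},\widehat{\pi}))\vec\varepsilon(\vec v)$ (for $l\geq 1$). After expanding $\widetilde{\mu},\widetilde{\pi},\widehat{\mu},\widehat{\pi}$ via \eqref{tilde}--\eqref{hat} and splitting symmetric tensors into trace-free and trace parts exactly as in \eqref{eq:aux_FWI_ad2}, these residual $\vec\varepsilon(\vec v)$-contributions assemble into the $\vec\Gamma$-tensors and supply the remaining entries of $\vec p_2^*$.

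For $I_2$ the $\widehat{\vec p}_2$-dependence is hidden inside $u_2=\Phi'(\vec p)\widehat{\vec p}_2$, so a second adjoint step is required. Since $H_1\in\mathcal{L}^*(X)$, $Q^*=Q$, and $H_1$ commutes with $Q$ (both are block-diagonal in the decomposition of $X$, and $H_1$ acts by the same operator on every memory block), integration by parts in time --- with boundary terms vanishing by $u_2(0)=0$ and $w(T)=0$ --- converts $I_2$ into an integral of $u_2$ paired with an expression built from $w'$, $Qw$, and $H_1$. I would then introduce $z=(\vec z,\vec\eta_0,\dots,\vec\eta_L)\in\mathcal{C}([0,T],X)$ as the mild solution of \eqref{evolution_back} with source $g:=-V'(\vec p)\widehat{\vec p}\,(w'+Qw)$, and pair $z$ with the forward equation \eqref{evolution2} satisfied by $u_2$ through another integration by parts. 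This transforms $I_2$ into $\int_0^T\langle V'(\vec p)\widehat{\vec p}_2(u'+Qu),z\rangle_X\,dt$, which has precisely the structure of the first-order adjoint pairing analysed in the proof of Theorem~\ref{th:FWI_adjoint} with the adjoint state $w$ there replaced by $z$ (and hence $\vec\varphi_l$ by $\vec\eta_l$, $\vec w$ by $\vec z$). The explicit expansion from that proof then delivers $\langle\widehat{\vec p}_2,\vec p_1^*\rangle$ in the claimed form.

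The main obstacle is not functional-analytic but combinatorial: the $I_1$-expansion generates numerous cross-products of $C$'s and $\widetilde{C}$'s in which the coefficients $\widehat{\rho},\widetilde{\mu},\widetilde{\pi},\widehat{\mu},\widehat{\pi}$ (built from $\widehat{\vec p}$) multiply $\widehat{\rho}_2,\widetilde{\mu}_2,\widetilde{\pi}_2,\widehat{\mu}_2,\widehat{\pi}_2$ (built from $\widehat{\vec p}_2$), and all of these must be sorted carefully into the five parameter slots of $\widehat{\vec p}_2$ and matched against the $\vec\Gamma$-tensors. This bookkeeping is lengthy but purely mechanical; beyond it, the proof uses only ingredients already employed in Theorems~\ref{th:adjoint2} and~\ref{th:FWI_adjoint}.
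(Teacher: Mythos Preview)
Your plan is correct and follows the paper's own proof essentially step for step: split the pairing from Theorem~\ref{th:adjoint2} into the two integrals, treat the $H_2$-integral by repeating the expansion of Theorem~\ref{th:FWI_adjoint} with $u_1$ in place of $u$ (the extra forcing in \eqref{FWI_first2}--\eqref{FWI_first3} producing the $\vec\Gamma$-tensors), and treat the $H_1$-integral by integrating by parts and introducing the second adjoint state $z$ to reduce to the first-order adjoint formula with $\vec\eta_l$ replacing $\vec\varphi_l$. Your observation that $V'(\vec p)\widehat{\vec p}$ commutes with $Q$ is exactly what makes the first integration by parts go through cleanly, and your assessment that the remaining work is purely combinatorial bookkeeping matches the paper's treatment.
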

\begin{proof}
From the previous theorem we have that
\begin{align*}
\big(F''(V(\vec p))[V'(\vec p)\widehat{\vec p}_1,V'(\vec p)\,\cdot]^*\vec g\big)\widehat{\vec p}_2 &=
 \int_0^T \langle  V'(\vec p)\widehat{\vec p}_1({u}_2^\prime(t)+Q{u}_2(t)),w(t)\big\rangle_X\d t\\
&\qquad\qquad+\int_0^T \langle  V'(\vec p)\widehat{\vec p}_2({u}_1^\prime(t)+Q{u}_1(t)),w(t)\big\rangle_X\d t. 
\end{align*}
Note that here ${u}_i=F'(V(\vec p)) V'(\vec p) \widehat{\vec p}_i $ solves \eqref{FWI_first} with $\widehat{\vec p}=\widehat{\vec p}_i$ and $w$ solves \eqref{C2:elawave_trans_ad}. 

We start with 
\begin{align*}
\int_0^T \langle  V'(\vec p)\widehat{\vec p}_1({u}_2^\prime(t)+Q{u}_2(t)),w(t)\big\rangle_X\d t &= -
\int_0^T \langle u_2,  V'(\vec p)\widehat{\vec p}_1(w^\prime(t)+Qw(t))\big\rangle_X\d t\\
&= \int_0^T \langle V'(\vec p)\widehat{\vec p}_2({u}^\prime(t)+Q{u}(t)),  z(t)\big\rangle_X\d t.
\end{align*}
We are now exactly in the situation of the proof of Theorem~\ref{th:FWI_adjoint}, see \eqref{eq:aux_FWI_ad},
and proceed accordingly which yields the representation of $\vec p_1^*$.

Similar to \eqref{sum} we find
for $\widehat{\vec p}_2=(\widehat{\rho}_2, \widehat{v}_{\sh,2}, \widehat{\tau}_{\sh,2}, \widehat{v}_{\pr,2}, \widehat{\tau}_{\pr,2})$ that
\begin{equation*} 
\big\langle V'(\vec p)\widehat{\vec p}_2\big(u_1' +Qu_1\big),w\rangle_X =\int_D\big(\widehat{\rho}_2\, 
\partial_t\vec v_1\cdot {\vec w} 
+S_0+S_1+\cdots + S_L\big)\d x
\end{equation*}
with 
$$
S_0= \Big[-\frac{\widehat{\rho}_2}{\rho^2}\widetilde{C}(\mu,\pi)\partial_t\Sig_{0,1} -\frac{1}{\rho} \widetilde{C}(\mu,\pi)
C(\widetilde{\mu}_2,\widetilde{\pi}_2) \widetilde{C}(\mu,\pi) \partial_t\Sig_{0,1}\Big]: {\vec \varphi}_0
$$
and, for $l=1,\dotsc,L$, 
\begin{multline*}
S_l=\Big[-\frac{\widehat{\rho}_2}{\rho^2}\widetilde{C}(\tau_\sh\mu,\tau_\pr\pi)\Big(\partial_t\Sig_{l,1} +\frac{\Sig_{l,1}}{\tau_{\Sig,l}}\Big)\\
-\frac{1}{\rho} \widetilde{C}(\tau_\sh\mu,\tau_\pr\pi)
C(\widehat{\mu}_2,\widehat{\pi}_2) \widetilde{C}(\tau_\sh\mu,\tau_\pr\pi) \Big(\partial_t\Sig_{l,1} +\frac{\Sig_{l,1}}{\tau_{\Sig,l}}\Big)\Big]: {\vec \varphi}_l.
\end{multline*}
In view of \eqref{FWI_first2} and \eqref{FWI_first3} we have that
$$
\partial_t {\vec \sigma}_{0,1} = C(\mu_0,\pi_0)\vec \varepsilon({\vec v}_1 ) + \big(\widehat{\rho}_1\,
C(\mu,\pi)+\rho\,C(\widetilde{\mu}_1,\widetilde{\pi}_1)\big) \vec\varepsilon(\vec v)
$$
and
$$
\partial_t\Sig_{l,1} +\frac{\Sig_{l,1}}{\tau_{\Sig,l}} = 
C(\tau_\sh\mu_0,\tau_\pr\pi_0)\vec \varepsilon({\vec v}_1)  
 + \big(\widehat{\rho}_1\,
C(\tau_\sh\mu,\tau_\pr\pi)+\rho\,C(\widehat{\mu}_1\widehat{\pi}_1)\big)\vec \varepsilon(\vec v),\quad l=1,\dotsc,L,
$$
which we plug into $S_0$ and $S_l$, respectively. Thus,
\begin{multline*}
S_0=-\frac{\widehat{\rho}_2}{\rho} \vec\varepsilon(\vec v_1):\vec\varphi_0 
- \frac{\widehat{\rho}_1\widehat{\rho}_2}{\rho^2} \vec\varepsilon(\vec v):\vec\varphi_0 
- C(\widetilde{\mu}_2,\widetilde{\pi}_2) 
\vec\varepsilon(\vec v_1):\widetilde{C}(\mu,\pi)\vec\varphi_0
\\
- \frac{\widehat{\rho}_2}{\rho}  C(\widetilde{\mu}_1,\widetilde{\pi}_1) 
\vec\varepsilon(\vec v):\widetilde{C}(\mu,\pi)\vec\varphi_0
-\frac{\widehat{\rho}_1}{\rho}  C(\widetilde{\mu}_2,\widetilde{\pi}_2) 
\vec\varepsilon(\vec v):\widetilde{C}(\mu,\pi)\vec\varphi_0 \\
-\widetilde{C}(\mu,\pi)C(\widetilde{\mu}_1,\widetilde{\pi}_1) 
\vec\varepsilon(\vec v): C(\widetilde{\mu}_2,\widetilde{\pi}_2) \widetilde{C}(\mu,\pi)\vec\varphi_0
\end{multline*}
and
\begin{multline*}
S_l =-\frac{\widehat{\rho}_2}{\rho}\vec \varepsilon(\vec v_1):\vec\varphi_l
- \frac{\widehat{\rho}_1\widehat{\rho}_2}{\rho^2}\vec \varepsilon(\vec v):\vec\varphi_l
- C(\widehat{\mu}_2,\widehat{\pi}_2) 
\vec\varepsilon(\vec v_1):\widetilde{C}(\tau_\sh\mu,\tau_\pr\pi)\vec\varphi_l
\\
- \frac{\widehat{\rho}_2}{\rho}  C(\widehat{\mu}_1,\widehat{\pi}_1) 
\vec\varepsilon(\vec v):\widetilde{C}(\tau_\sh\mu,\tau_\pr\pi)\vec\varphi_l
-\frac{\widehat{\rho}_1}{\rho}  C(\widehat{\mu}_2,\widehat{\pi}_2) 
\vec\varepsilon(\vec v):\widetilde{C}(\tau_\sh\mu,\tau_\pr\pi)\vec\varphi_l \\
-\widetilde{C}(\tau_\sh\mu,\tau_\pr\pi)C(\widehat{\mu}_1,\widehat{\pi}_1) 
\vec\varepsilon(\vec v): C(\widehat{\mu}_2,\widehat{\pi}_2) \widetilde{C}(\tau_\sh\mu,\tau_\pr\pi)\vec\varphi_l.
\end{multline*}
We have
$$
\widetilde{C}(\mu,\pi)C(\widetilde{\mu}_1,\widetilde{\pi}_1)\vec\varepsilon(\V) = \frac{\widetilde{\mu}_1}{\mu} 
\vec\varepsilon(\V)  +\frac{\mu \widetilde{\pi}_1-\widetilde{\mu}_1\pi}{\mu(3\pi-4\mu)}\,\Div\V\, 
\vec I
$$
and 
$$
C(\widetilde{\mu}_2,\widetilde{\pi}_2)\widetilde{C}(\mu,\pi)\vec \varphi_0 =\frac{\widetilde{\mu}_2}{\mu} 
\vec\varphi_0  +\frac{\mu \widetilde{\pi}_2-\widetilde{\mu}_2\pi}{\mu(3\pi-4\mu)}\,\trace(\vec\varphi_0)\,\vec I
$$
so that
\begin{multline*}
\widetilde{C}(\mu,\pi)C(\widetilde{\mu}_1,\widetilde{\pi}_1) 
\vec\varepsilon(\vec v): C(\widetilde{\mu}_2,\widetilde{\pi}_2) \widetilde{C}(\mu,\pi)\vec\varphi_0 = \frac{\widetilde{\mu}_1\widetilde{\mu}_2}{\mu^2}\, \vec\varepsilon(\V): \vec\varphi_0 \\
+ \frac{\widetilde{\mu}_2(3\widetilde{\mu}_1\pi^2-4\widetilde{\pi}_1\mu^2)+\widetilde{\pi}_2
(3\widetilde{\pi}_1\pi^2-4\widetilde{\mu}_1\mu^2)}{\mu^2(3\pi-4\mu)^2}
\,\Div\V\, \trace(\vec\varphi_0) .
\end{multline*}
Plugging these identities into $S_0$ (and an adapted version into $S_l$) and recalling \eqref{eq:aux_FWI_ad2} we find that
\begin{align*}
S_0&= \widehat{\rho}_2\Bigg( -\frac{1}{\rho} \Big[\vec\varepsilon(\vec v_1)+\Big(\frac{\widehat{\rho}_1}{\rho}
+\frac{\widetilde{\mu}_1}{\mu}\Big)\vec\varepsilon(\vec v)\Big]:\vec\varphi_0 +\frac{1}{\rho}\Big(
\frac{\widetilde{\mu}_1\pi}{\mu(3\pi-4\mu)}-
\frac{\widetilde{\pi}_1}{3\pi-4\mu}\Big)\Div\V \trace(\vec\varphi_0)\Bigg)\\[1mm]
&\quad+\widetilde{\mu}_2 \Bigg(-\frac{1}{\mu}\vec\varepsilon(\V_1): \vec\varphi_0
 -\Big(\frac{\widehat{\rho}_1}{\rho\mu}+\frac{\widetilde{\mu}_1}{\mu^2}\Big) \vec\varepsilon(\V): \vec\varphi_0
+\frac{\pi}{\mu(3\pi-4\mu)} \Div\V_1\, \trace(\vec\varphi_0) 
\\
 &\qquad \qquad \qquad \qquad  \qquad \qquad-\Big(\frac{\widehat{\rho}_1}{\rho}\,\frac{\pi}{\mu(3\pi-4\mu)}+\frac{3\widetilde{\mu}_1\pi^2-4\widetilde{\pi}_1\mu^2}{\mu^2(3\pi-4\mu)^2}\Big) \Div\V\, \trace(\vec\varphi_0) \Bigg)\\[1mm]
&\quad+\widetilde{\pi}_2\Bigg(\frac{-1}{3\pi-4\mu}\Div\V_1\, \trace(\vec\varphi_0)
-\Big(\frac{\widehat{\rho}_1}{\rho}\,\frac{1}{3\pi-4\mu}-
\frac{3\widetilde{\pi}_1\pi^2-4\widetilde{\mu}_1\mu^2}{\mu^2(3\pi-4\mu)^2}\Big)\Div\V\, \trace(\vec\varphi_0)\Bigg)
\end{align*}
and
\begin{align*}
S_l&= \widehat{\rho}_2\Bigg( -\frac{1}{\rho} \Big[\vec\varepsilon(\vec v_1)+\Big(\frac{\widehat{\rho}_1}{\rho}
+\frac{\widehat{\mu}_1}{\tau_\sh\mu}\Big)\vec\varepsilon(\vec v)\Big]:\vec\varphi_l\\
 &\qquad \qquad \qquad \qquad  \qquad \qquad+\frac{1}{\rho}\Big(
\frac{\widehat{\mu}_1\tau_\pr\pi}{\tau_\sh\mu(3\tau_\pr\pi-4\tau_\sh\mu)}-
\frac{\widehat{\pi}_1}{3\tau_\pr\pi-4\tau_\sh\mu}\Big)\Div\V \trace(\vec\varphi_l)\Bigg)\\[1mm]
&\quad+\widehat{\mu}_2 \Bigg(-\frac{1}{\tau_\sh\mu}\vec\varepsilon(\V_1): \vec\varphi_l
-\Big(\frac{\widehat{\rho}_1}{\rho\tau_\sh\mu}+\frac{\widehat{\mu}_1}{\tau_\sh^2\mu^2}\Big) \vec\varepsilon(\V): \vec\varphi_l
+\frac{\tau_\pr\pi}{\tau_\sh\mu(3\tau_\pr\pi-4\tau_\sh\mu)} 
\Div\V_1\, \trace(\vec\varphi_l) 
 \\
 &\qquad \qquad \qquad \qquad  \qquad -\Big(\frac{\widehat{\rho}_1}{\rho}\,\frac{\tau_\pr\pi}{\tau_\sh\mu(3\tau_\pr\pi-4\tau_\sh\mu)}+\frac{3\widehat{\mu}_1\tau_\pr^2\pi^2-4\widehat{\pi}_1\tau_\sh^2\mu^2}{\tau_\sh^2\mu^2(3\tau_\pr\pi-4\tau_\sh\mu)^2}\Big) \Div\V\, \trace(\vec\varphi_l) \Bigg)\\[1mm]
&\quad+\widehat{\pi}_2\Bigg(\frac{-1}{3\tau_\pr\pi-4\tau_\sh\mu}\Div\V_1\, \trace(\vec\varphi_l)\\
&\qquad \qquad \qquad \qquad \qquad  -\Big(\frac{\widehat{\rho}_1}{\rho}\,\frac{1}{3\tau_\pr\pi-4\tau_\sh\mu}-
\frac{3\widehat{\pi}_1\tau_\pr^2\pi^2-4\widehat{\mu}_1\tau_\sh^2\mu^2}{\tau_\sh^2\mu^2(3\tau_\pr\pi-4\tau_\sh\mu)^2}\Big)\Div\V\, \trace(\vec\varphi_l)\Bigg).
\end{align*}
Next we replace $\widetilde{\mu}_2$, $\widetilde{\pi}_2$, and $\widehat{\mu}_2$,  $\widehat{\pi}_2$  by their values
from \eqref{eq:aux_FWI_ad3} and \eqref{eq:aux_FWI_ad4}, respectively. Finally, we calculate $S_0+\cdots+S_L$ and group the terms 
belonging to the components of~$\widehat{\vec p}_2$.
\end{proof}
\begin{theorem}\label{FWI''_adjoint2}
Under the assumptions of Theorem~{\rm\ref{th:FWI_second}} we have that
the adjoint $$F'(V(\vec p))V''(\vec p)[\widehat{\vec p},\cdot]^*\in \mathcal{L}\big( L^2([0,T],X),( L^\infty(D)^5)'\big)$$ at $\vec p=(\rho,v_{\sh}, \tau_{\sh}, v_{\pr},\tau_{\pr})\in \mathsf{D}(\Phi)$ and $\widehat{\vec p}=(\widehat{\rho},\widehat{v}_{\sh}, \widehat{\tau}_{\sh}, \widehat{v}_{\pr},\widehat{\tau}_{\pr})\in L^\infty(D)^5$ is given by 
$$
F'(V(\vec p))V''(\vec p)[\widehat{\vec p},\cdot]^*\vec g = 
\begin{pmatrix}
\frac{1}{\rho}\int_0^T \big(\vec\varepsilon(\vec v): \vec\Upsilon^\rho_1 + \trace(\vec\Upsilon^\rho_2)\Div \V\big)\d t \\[2mm]
\frac{2}{v_\sh} \int_0^T \big(\vec\varepsilon(\vec v): \vec\Upsilon^v_{\sh,1}+  \trace(\vec\Upsilon^v_{\sh,2})\Div \V\big) \d t\\[2mm]
\frac{1}{1+\alpha\tau_\sh} \int_0^T \big(\vec\varepsilon(\vec v): \vec\Upsilon^\tau_{\sh,1}+  \trace(\vec\Upsilon^\tau_{\sh,2})\Div \V\big)\d t
\\[2mm]
\frac{2\pi}{v_\pr}\int_0^T\trace(\vec\Upsilon^v_{\pr})\Div \V\, \d t
\\[2mm]
\frac{\pi}{1+\alpha\tau_\pr}\int_0^T \trace(\vec\Upsilon^\tau_{\pr})\Div \V\,  \d t
\end{pmatrix} \in L^1(D)^5
$$
for $\vec g =(\vec g_{-1},\vec g_0,\dotsc,\vec g_L)\in L^2\big([0,T],L^2(D,\RR^3)\times L^2(D,\RS)^{1+L}\big)$ where
 $\vec v$ is the first component of the solution of \eqref{C2:elawave_trans}. Let
$w=({\vec w},  {\vec \varphi}_0,\dotsc, {\vec \varphi}_L)$ solve \eqref{C2:elawave_trans_ad} with $w(T)=0$ and set  $\vec\Sigma=\sum_{l=1}^L\vec\varphi_l$. Then,
\begin{align*}
\vec\Upsilon^\rho_1 &= \Big(\frac{\widehat{\rho}}{\rho}+\frac{\widetilde{\mu}}{\mu}\Big)\vec\varphi_0
+ \Big(\frac{\widehat{\rho}}{\rho}+\frac{\widehat{\mu}}{\tau_\sh\mu}\Big)\vec\Sigma,\quad
\vec\Upsilon^\rho_2 = \frac{\mu\widetilde{\pi}-\widetilde{\mu}\pi}{\mu(3\pi-4\mu)}\,\vec\varphi_0
+\frac{\tau_\sh\mu\widehat{\pi}-\tau_\pr\widehat{\mu}\pi}{\tau_\sh\mu(3\tau_\pr\pi-4\tau_\sh\mu)}\,\vec\Sigma,\\[1mm]
\vec\Upsilon^v_{\sh,1}&= \Big(\frac{\widehat{\rho}}{\rho}+\frac{2\widetilde{\mu}}{\mu}\Big) \vec\varphi_0
+  \Big(\frac{\widehat{\rho}}{\rho}+\frac{2\widehat{\mu}}{\tau_\sh\mu}\Big)\vec\Sigma,\\[1mm]
\vec\Upsilon^v_{\sh,2}&= \Big(2\,\frac{3\widetilde{\mu}\pi^2-4\widetilde{\pi}\mu^2}{\mu(3\pi-4\mu)^2}-\frac{\widehat{\rho}}{\rho}\,\frac{\pi}{3\pi-4\mu}\Big)\vec\varphi_0\\
&\qquad\qquad\qquad\qquad\qquad\qquad+ \Big(2\,\frac{3\widehat{\mu}\tau_\pr^2\pi^2-4\widehat{\pi}\tau_\sh^2\mu^2}{\tau_\sh\mu(3\tau_\pr\pi-4\tau_\sh\mu)^2}-\frac{\widehat{\rho}}{\rho}\,\frac{\tau_\pr\pi}
{3\tau_\pr\pi-4\tau_\sh\mu}\Big)\vec\Sigma,
\\[1mm]
\vec\Upsilon^\tau_{\sh,1}&= -\alpha\Big(\frac{\widehat{\rho}}{\rho}+\frac{2\widetilde{\mu}}{\mu}\Big) \vec\varphi_0
+  \Big(\frac{\widehat{\rho}}{\tau_\sh\rho}+\frac{2\widehat{\mu}}{\tau_\sh^2\mu}\Big)\vec\Sigma,\\[1mm]
\vec\Upsilon^\tau_{\sh,2}&= -\alpha\Big(2\,\frac{3\widetilde{\mu}\pi^2-4\widetilde{\pi}\mu^2}{\mu(3\pi-4\mu)^2}-\frac{\widehat{\rho}}{\rho}\,\frac{\pi}{3\pi-4\mu}\Big)\vec\varphi_0\\
&\qquad\qquad\qquad\qquad\qquad\qquad+ \Big(2\,\frac{3\widehat{\mu}\tau_\pr^2\pi^2-4\widehat{\pi}\tau_\sh^2\mu^2}{\tau_\sh^2\mu(3\tau_\pr\pi-4\tau_\sh\mu)^2}-\frac{\widehat{\rho}}{\rho}\,\frac{\tau_\pr\pi}
{\tau_\sh(3\tau_\pr\pi-4\tau_\sh\mu)}\Big)\vec\Sigma,
\\[1mm]
\vec\Upsilon^v_{\pr}&= \Big(\frac{\widehat{\rho}}{\rho}\,\frac{1}{3\pi-4\mu}+2\,
\frac{3\widetilde{\pi}\pi^2-4\widetilde{\mu}\mu^2}{\mu^2(3\pi-4\mu)^2}\Big)\vec\varphi_0\\
&\qquad\qquad\qquad\qquad\qquad\qquad+\tau_\pr\Big(\frac{\widehat{\rho}}{\rho}\,\frac{1}{3\tau_\pr\pi-4\tau_\sh\mu}+2\,
\frac{3\widehat{\pi}\tau_\pr^2\pi^2-4\widehat{\mu}\tau_\sh^2\mu^2}{\tau_\sh^2\mu^2(3\tau_\pr\pi-4\tau_\sh\mu)^2}\Big)\vec\Sigma,
\\[1mm]
\vec\Upsilon^\tau_{\pr}&= -\alpha\Big(\frac{\widehat{\rho}}{\rho}\,\frac{1}{3\pi-4\mu}+2\,
\frac{3\widetilde{\pi}\pi^2-4\widetilde{\mu} \mu^2}{\mu^2(3\pi-4\mu)^2}\Big)\vec\varphi_0\\
&\qquad\qquad\qquad\qquad\qquad\qquad+\Big(\frac{\widehat{\rho}}{\rho}\,\frac{1}{3\tau_\pr\pi-4\tau_\sh\mu}+2\,
\frac{3\widehat{\pi}\tau_\pr^2\pi^2-4\widehat{\mu}\tau_\sh^2\mu^2}{\tau_\sh^2\mu^2(3\tau_\pr\pi-4\tau_\sh\mu)^2}
\Big)\vec\Sigma,
\end{align*}
with the abbreviations $\widetilde{\mu}$, $\widetilde{\pi}$, and $\widehat{\mu}$,  $\widehat{\pi}$  from \eqref{eq:aux_FWI_ad3} and \eqref{eq:aux_FWI_ad4} which depend on $\widehat{\vec p}$.
\end{theorem}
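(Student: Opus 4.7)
The plan is to mirror the strategy used in the proof of Theorem~\ref{FWI''_adjoint1} but applied to the second summand of \eqref{Phi''}. For any test direction $\widehat{\vec p}_2=(\widehat{\rho}_2, \widehat{v}_{\sh,2}, \widehat{\tau}_{\sh,2}, \widehat{v}_{\pr,2}, \widehat{\tau}_{\pr,2})\in L^\infty(D)^5$ and any $\vec g\in L^2([0,T],X)$, I would start from the defining identity
$$
\big(F'(V(\vec p))V''(\vec p)[\widehat{\vec p},\cdot]^*\vec g\big)\widehat{\vec p}_2 = \big\langle F'(V(\vec p))V''(\vec p)[\widehat{\vec p},\widehat{\vec p}_2],\vec g\big\rangle_{L^2([0,T],X)}
$$
and apply Theorem~\ref{th:adjoint} with $H=V''(\vec p)[\widehat{\vec p},\widehat{\vec p}_2]\in\mathcal{L}^*(X)$. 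This converts the pairing into
$$
\int_0^T \big\langle V''(\vec p)[\widehat{\vec p},\widehat{\vec p}_2]\big(u'(t)+Qu(t)\big),w(t)\big\rangle_X\,\d t,
$$
where $w$ is the mild solution of \eqref{C2:elawave_trans_ad} and $u=(\V,\Sig_0,\dots,\Sig_L)$ solves \eqref{C2:elawave_trans}. This reduces the problem to an algebraic computation in the integrand, exactly analogous to \eqref{eq:aux_FWI_ad} but with $V'$ replaced by $V''$.

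Next I would expand the integrand by substituting the explicit formula \eqref{V''} for $V''$, splitting the result as $T_0+T_1+\dotsb+T_L$ analogous to the decomposition \eqref{sum}. Each $T_l$ consists of four contributions: the pure $\widehat{\rho}\widehat{\rho}_2/\rho^3$ term, two mixed terms involving $\widetilde{C}'$ (symmetric in the two arguments), and the pure second-derivative term involving $\widetilde{C}''$ from \eqref{C''}. Using \eqref{C2:elawave2_trans} and \eqref{C2:elawave3_trans} to replace $\partial_t\Sig_l+\Sig_l/\tsigl$ by $C(\cdot,\cdot)\vec\varepsilon(\V)$, and then using \eqref{C^-1} to compute the Frobenius contractions $C(\cdot,\cdot)\vec\varepsilon(\V):\widetilde{C}(\mu,\pi)\vec\varphi_l$ in the same fashion as \eqref{eq:aux_FWI_ad2}, I reduce every $T_l$ to a linear combination of $\vec\varepsilon(\V):\vec\varphi_l$ and $\Div\V\,\trace(\vec\varphi_l)$.

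The final step is bookkeeping: substitute the expressions \eqref{eq:aux_FWI_ad3}, \eqref{eq:aux_FWI_ad4} for $\widetilde{\mu}_2,\widetilde{\pi}_2,\widehat{\mu}_2,\widehat{\pi}_2$ in terms of the five components $\widehat{\rho}_2,\widehat{v}_{\sh,2},\widehat{\tau}_{\sh,2},\widehat{v}_{\pr,2},\widehat{\tau}_{\pr,2}$, sum $T_0+\dotsb+T_L$, collect the coefficient of each component of $\widehat{\vec p}_2$, and identify those coefficients with the tensors $\vec\Upsilon^\rho_1,\vec\Upsilon^\rho_2,\vec\Upsilon^v_{\sh,1},\dots,\vec\Upsilon^\tau_{\pr}$. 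Since $\widehat{\vec p}_2$ is arbitrary in $L^\infty(D)^5$ and the result must hold as a duality pairing with $L^1(D)^5$, reading off the coefficients finishes the identification.

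The main obstacle is purely combinatorial: the $\widetilde{C}''$ contribution from \eqref{C''} is a sum of \emph{two} composed triples $\widetilde{C}\circ C(\widetilde{\mu},\widetilde{\pi})\circ \widetilde{C}\circ C(\widetilde{\mu}_2,\widetilde{\pi}_2)\circ \widetilde{C}$ (and its transpose), and each one has to be evaluated on $\vec\varepsilon(\V)$ and paired with $\vec\varphi_l$, producing expressions of the form $(3\widetilde{\mu}\pi^2-4\widetilde{\pi}\mu^2)/[\mu^2(3\pi-4\mu)^2]$ with the characteristic factor of $2$ in $\vec\Upsilon^v_{\sh,2},\vec\Upsilon^v_{\pr},\vec\Upsilon^\tau_{\sh,2},\vec\Upsilon^\tau_{\pr}$ reflecting precisely the two summands in \eqref{C''}. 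Carefully tracking these two symmetric contributions—and the analogous ones in the $\tau_\sh\mu,\tau_\pr\pi$ block—is where all the care lies; the computation is otherwise a direct but lengthy generalization of that carried out for $\vec p_2^*$ in the proof of Theorem~\ref{FWI''_adjoint1}.
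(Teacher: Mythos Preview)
Your proposal is correct and follows essentially the same route as the paper: it invokes Theorem~\ref{th:adjoint} (equivalently~\eqref{eq:aux_FWI_ad}) to reduce the adjoint pairing to $\int_0^T\langle V''(\vec p)[\widehat{\vec p},\widehat{\vec p}_2](u'+Qu),w\rangle_X\,\d t$, expands via~\eqref{V''} into a sum $S_0+\cdots+S_L$, simplifies each summand using the state equations \eqref{C2:elawave2_trans}, \eqref{C2:elawave3_trans} together with \eqref{C''} and the computation~\eqref{eq:aux_FWI_ad2}, and finally substitutes \eqref{eq:aux_FWI_ad3}--\eqref{eq:aux_FWI_ad4} and collects the coefficients of the components of~$\widehat{\vec p}_2$.
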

\begin{proof}
Since 
$$
\big(F'(V(\vec p))V''(\vec p)[\widehat{\vec p}_1,\cdot]^*\vec g\big)\widehat{\vec p}_2 \stackrel{\eqref{eq:aux_FWI_ad}}{=}
\int_0^T\! \big\langle V''(\vec p)[\widehat{\vec p}_1,\widehat{\vec p}_2]\big(u'(t) +Qu(t)\big),w(t)\rangle_X\,\d t.
$$
we are basically again in the situation of the  proof of Theorem~\ref{th:FWI_adjoint}.  Using \eqref{V''}  we find that
$$
\big\langle V''(\vec p)[\widehat{\vec p}_1,\widehat{\vec p}_2]\big(u' +Qu\big),w\rangle_X = 
\int_D\big(S_0+S_1+\cdots + S_L\big)\d x
$$
with
\begin{multline*}
S_0=\Bigg(\frac{\widehat{\rho}_1\widehat{\rho}_2}{\rho^3}\widetilde{C}(\mu,\pi)-
\frac{\widehat{\rho}_1}{\rho^2}\widetilde{C}'(\mu,\pi)\begin{bmatrix}
\widetilde{\mu}_2\\[1mm]
\widetilde{\pi}_2
\end{bmatrix} -\frac{\widehat{\rho}_2}{\rho^2} \widetilde{C}'(\mu,\pi)\!
\begin{bmatrix}
\widetilde{\mu}_1\\[1mm]
\widetilde{\pi}_1
\end{bmatrix}\\ +\frac{1}{\rho} \widetilde{C}''(\mu,\pi)\!
\begin{bmatrix}
\widetilde{\mu}_1\\[1mm]
\widetilde{\pi}_1
\end{bmatrix}
\begin{bmatrix}
\widetilde{\mu}_2\\[1mm]
\widetilde{\pi}_2
\end{bmatrix}\Bigg)\partial_t\Sig_0:\vec \varphi_0
\end{multline*}
and
\begin{multline*}
S_l= \Bigg(\frac{\widehat{\rho}_1\widehat{\rho}_2}{\rho^3} \widetilde{C}(\tau_\sh\mu,\tau_\pr\pi)
-\frac{\widehat{\rho}_1}{\rho^2}\widetilde{C}'(\tau_\sh\mu,\tau_\pr\pi)\begin{bmatrix}
\widehat{\mu}_2\\[1mm]
\widehat{\pi}_2
\end{bmatrix} -\frac{\widehat{\rho}_2}{\rho^2} \widetilde{C}'(\tau_\sh\mu,\tau_\pr\pi)\!
\begin{bmatrix}
\widehat{\mu}_1\\[1mm]
\widehat{\pi}_1
\end{bmatrix}\\+\frac{1}{\rho} \widetilde{C}''(\tau_\sh\mu,\tau_\pr\pi)\!
\begin{bmatrix}
\widehat{\mu}_1\\[1mm]
\widehat{\pi}_1
\end{bmatrix}
\begin{bmatrix}
\widehat{\mu}_2\\[1mm]
\widehat{\pi}_2
\end{bmatrix}\Bigg)\Big(\partial_t \Sig_l+\frac{\Sig_l}{\tau_{\Sig,l}}\Big):\vec\psi_l,\quad l=1,\dotsc,L.
\end{multline*}
First we simplify $S_0$. By \eqref{C2:elawave2_trans},
$$
\frac{1}{\rho} \widetilde{C}(\mu,\pi)\partial_t\Sig_0:\vec\varphi_0=
 \vec\varepsilon(\V): \vec\varphi_0.
$$
Further, in view of \eqref{eq:aux_FWI_ad2},
\begin{multline*}
-\frac{1}{\rho}\widetilde{C}'(\mu,\pi)\begin{bmatrix}
\widetilde{\mu}_i\\[1mm]
\widetilde{\pi}_i
\end{bmatrix}\partial_t\Sig_0:\vec\varphi_0\\
= \widetilde{\mu}_i\Big(\frac{1}{\mu } \vec\varepsilon(\vec v):{\vec \varphi}_0
- \frac{\pi}{\mu(3\pi-4\mu)}\,\Div\V\, \trace({\vec \varphi}_0)\Big)+\frac{\widetilde{\pi}_i}{3\pi-4\mu} 
\,\Div\V\, \trace({\vec \varphi}_0),\quad i=1,2.
\end{multline*}
Next, using \eqref{C2:elawave2_trans} and \eqref{C''}  we get
\begin{multline*}
\frac{1}{\rho} \widetilde{C}''(\mu,\pi)\!
\begin{bmatrix}
\widetilde{\mu}_1\\[1mm]
\widetilde{\pi}_1
\end{bmatrix}
\begin{bmatrix}
\widetilde{\mu}_2\\[1mm]
\widetilde{\pi}_2
\end{bmatrix}\partial_t\Sig_0:\vec \varphi_0 = \widetilde{C}(\mu,\pi)C(\widetilde{\mu}_1,\widetilde{\pi}_1)\vec\varepsilon(\V)
:C(\widetilde{\mu}_2,\widetilde{\pi}_2)\widetilde{C}(\mu,\pi)\vec \varphi_0\\
+ \widetilde{C}(\mu,\pi)C(\widetilde{\mu}_2,\widetilde{\pi}_2)\vec\varepsilon(\V)
:C(\widetilde{\mu}_1,\widetilde{\pi}_1)\widetilde{C}(\mu,\pi)\vec \varphi_0.
\end{multline*}
We have
$$
\widetilde{C}(\mu,\pi)C(\widetilde{\mu}_2,\widetilde{\pi}_2)\vec\varepsilon(\V) = \frac{\widetilde{\mu}_2}{\mu} 
\vec\varepsilon(\V)  +\frac{\mu \widetilde{\pi}_2-\widetilde{\mu}_2\pi}{\mu(3\pi-4\mu)}\,\Div\V\, 
\vec I
$$
and 
$$
C(\widetilde{\mu}_1,\widetilde{\pi}_1)\widetilde{C}(\mu,\pi)\vec \varphi_0 =\frac{\widetilde{\mu}_1}{\mu} 
\vec\varphi_0  +\frac{\mu \widetilde{\pi}_1-\widetilde{\mu}_1\pi}{\mu(3\pi-4\mu)}\,\trace(\vec\varphi_0)\,\vec I
$$
so that
\begin{multline*}
\frac{1}{\rho} \widetilde{C}''(\mu,\pi)\!
\begin{bmatrix}
\widetilde{\mu}_1\\[1mm]
\widetilde{\pi}_1
\end{bmatrix}
\begin{bmatrix}
\widetilde{\mu}_2\\[1mm]
\widetilde{\pi}_2
\end{bmatrix}\partial_t\Sig_0:\vec \varphi_0 = 2\,\frac{\widetilde{\mu}_1\widetilde{\mu}_2}{\mu^2}\, \vec\varepsilon(\V): \vec\varphi_0 \\
+ 2\,\frac{\widetilde{\mu}_2(3\widetilde{\mu}_1\pi^2-4\widetilde{\pi}_1\mu^2)+\widetilde{\pi}_2
(3\widetilde{\pi}_1\pi^2-4\widetilde{\mu}_1\mu^2)}{\mu^2(3\pi-4\mu)^2}
\,\Div\V\, \trace(\vec\varphi_0) .
\end{multline*}
Substituting above auxiliary results into the expression for $S_0$ yields 
\begin{align*}
S_0&= \widehat{\rho}_2\Bigg(\Big(\frac{\widehat{\rho}_1}{\rho^2}+\frac{\widetilde{\mu}_1}{\rho\mu}\Big) \vec\varepsilon(\V): \vec\varphi_0 +\frac{1}{\rho}\Big(\frac{\widetilde{\pi}_1}{3\pi-4\mu} -\frac{\widetilde{\mu}_1\pi}{\mu(3\pi-4\mu)}\Big) \,\Div\V\, \trace(\vec\varphi_0) \Bigg)\\
&\quad+\widetilde{\mu}_2 \Bigg(\Big(\frac{\widehat{\rho}_1}{\rho\mu}+\frac{2\widetilde{\mu}_1}{\mu^2}\Big) \vec\varepsilon(\V): \vec\varphi_0 +\Big(2\,\frac{3\widetilde{\mu}_1\pi^2-4\widetilde{\pi}_1\mu^2}{\mu^2(3\pi-4\mu)^2}-\frac{\widehat{\rho}_1}{\rho}\,\frac{\pi}{\mu(3\pi-4\mu)}\Big) \Div\V\, \trace(\vec\varphi_0) \Bigg)\\
&\quad+\widetilde{\pi}_2\Big(\frac{\widehat{\rho}_1}{\rho}\,\frac{1}{\mu(3\pi-4\mu)}+2\,
\frac{3\widetilde{\pi}_1-4\widetilde{\mu}_1}{(3\pi-4\mu)^2}\Big)\Div\V\, \trace(\vec\varphi_0).
\end{align*}
Similar computations for $l=1,\dotsc,L$ based on \eqref{C2:elawave3_trans} result in
\begin{align*}
S_l&= \widehat{\rho}_2\Bigg(\Big(\frac{\widehat{\rho}_1}{\rho^2}+\frac{\widehat{\mu}_1}{\rho\tau_\sh\mu}\Big) \vec\varepsilon(\V): \vec\varphi_l +\frac{1}{\rho}\Big(\frac{\widehat{\pi}_1}{3\tau_\pr\pi-4\tau_\sh\mu} -\frac{\widehat{\mu}_1\tau_\pr\pi}
{\tau_\sh\mu(3\tau_\pr\pi-4\tau_\sh\mu)}\Big) \,\Div\V\, \trace(\vec\varphi_l) \Bigg)\\
&\quad+\widehat{\mu}_2 \Bigg(\Big(\frac{\widehat{\rho}_1}{\rho\tau_\sh\mu}+\frac{2\widehat{\mu}_1}{\tau_\sh^2\mu^2}\Big) \vec\varepsilon(\V): \vec\varphi_l \\
&\qquad\qquad\qquad\qquad +\Big(2\,\frac{3\widehat{\mu}_1\tau_\pr^2\pi^2-4\widehat{\pi}_1\tau_\sh^2\mu^2}{\tau_\sh^2\mu^2(3\tau_\pr\pi-4\tau_\sh\mu)^2}-\frac{\widehat{\rho}_1}{\rho}\,\frac{\tau_\pr\pi}
{\tau_\sh\mu(3\tau_\pr\pi-4\tau_\sh\mu)}\Big) \Div\V\, \trace(\vec\varphi_l) \Bigg)\\
&\quad+\widehat{\pi}_2\Big(\frac{\widehat{\rho}_1}{\rho}\,\frac{1}{\tau_\sh\mu(3\tau_\pr\pi-4\tau_\sh\mu)}+2\,
\frac{3\widehat{\pi}_1-4\widehat{\mu}_1}{(3\tau_\pr\pi-4\tau_\sh\mu)^2}\Big)\Div\V\, \trace(\vec\varphi_l).
\end{align*}
Next we replace $\widetilde{\mu}_2$, $\widetilde{\pi}_2$, and $\widehat{\mu}_2$,  $\widehat{\pi}_2$  by their values
from \eqref{eq:aux_FWI_ad3} and \eqref{eq:aux_FWI_ad4}, respectively. Finally, we calculate $S_0+\cdots+S_L$ and group the terms 
belonging to the components of~$\widehat{\vec p}_2$.
\end{proof}
In view of \eqref{Phi''_2} we have now derived  an analytic expression for $\Phi''(\vec p)[\widehat{\vec p},\cdot]^*$ in rather basic terms. 
\appendix
\section{Two spatial dimensions}
The expressions for the Fr\'echet derivatives and their adjoints provided in the main part of this paper cannot directly be applied to
the viscoelastic wave equation in two spatial dimensions. The differences to the 3D case which have to be taken into account are
$$
\trace(\vec I)=2 \quad\text{and}\quad  \widetilde{C}(m,p)\vec M=C^{-1}(m,p)\vec M= \frac{1}{2m}\,\vec M +\frac{2m-p}{4m(p-m)}\,\trace(\vec M)\vec I. 
$$
With these ingredients the derivatives and adjoints can be 
calculated exactly along the lines presented on the previous pages.  

In this appendix we provide 2D versions of 
Theorems~\ref{th:FWI_adjoint}, \ref{FWI''_adjoint1}, and~\ref{FWI''_adjoint2}.

\begin{theorem}[2D version of Theorem~\ref{th:FWI_adjoint}]~\\
The only quantities which have to be changed are $\vec\Sigma^v$, $\vec\Sigma^\tau_{\sh,1}$, 
 and $\vec\Sigma^\tau_{\pr}$. With
\begin{align*}
\vec\Sigma^v&=\frac{1}{2(\pi-\mu)}\, {\vec \varphi}_0 + 
\frac{\tau_\pr}{2(\tau_\pr\pi-\tau_\sh\mu)} \,\vec\Sigma,\\[1mm]
\vec\Sigma^\tau_{\sh,1}&=-\frac{\alpha}{2(\pi-\mu)}\, {\vec \varphi}_0 
+\frac{\tau_\pr}{2\,\tau_\sh(\tau_\pr\pi-\tau_\sh\mu)}\,\vec\Sigma,\\[1mm]
\vec\Sigma^\tau_\pr&=\frac{\alpha}{2(\pi-\mu)}\, {\vec \varphi}_0 
-\frac{1}{2(\tau_\pr\pi-\tau_\sh\mu)}\,\vec\Sigma,
\end{align*}
 the statement of Theorem~{\rm \ref{th:FWI_adjoint}} can be copied  without any further changes.
\end{theorem}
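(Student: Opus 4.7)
My plan is to rerun the proof of Theorem~\ref{th:FWI_adjoint} with minimal modifications, since the abstract framework leading to the identity
$$\big\langle \Phi'(\vec p)^*\vec g, \widehat{\vec p}\big\rangle_{(L^\infty(D)^5)'\times L^\infty(D)^5} = \int_0^T \big\langle V'(\vec p)\widehat{\vec p}\bigl(u'(t) + Qu(t)\bigr), w(t)\big\rangle_X\,\d t,$$
together with the decomposition \eqref{sum} and the intermediate expressions for $S_0$ and $S_l$, is purely functional-analytic and does not depend on the spatial dimension. The adjoint state equation \eqref{C2:elawave_trans_ad} likewise has the same form in 2D. The only place where the ambient dimension enters is (i) the explicit inversion formula for $C$ and (ii) the contraction $\vec I:\vec I$ that appears whenever the trace-parts of two symmetric tensors are paired.

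Concretely, I would recompute the central term $C(\widetilde{\mu},\widetilde{\pi})\vec\varepsilon(\vec v):\widetilde{C}(\mu,\pi)\vec\varphi_0$ of \eqref{eq:aux_FWI_ad2} using the 2D formula $\widetilde{C}(m,p)\vec M = \frac{1}{2m}\vec M + \frac{2m-p}{4m(p-m)}\,\trace(\vec M)\,\vec I$ together with $\vec I:\vec I = 2$. Expanding the four resulting contractions and collecting over a common denominator, the same structural cancellation as in 3D (the combinations of $\widetilde{\mu}$ and $\widetilde{\pi}$ conspire to yield $\widetilde{\pi}\mu - \widetilde{\mu}\pi$) reduces the expression to
$$\widetilde{\mu}\Big(\frac{1}{\mu}\,\vec\varepsilon(\vec v):\vec\varphi_0 - \frac{\pi}{2\mu(\pi-\mu)}\,\Div\V\,\trace(\vec\varphi_0)\Big) + \frac{\widetilde{\pi}}{2(\pi-\mu)}\,\Div\V\,\trace(\vec\varphi_0),$$
which agrees with the 3D formula under the uniform substitution $3\pi-4\mu \mapsto 2(\pi-\mu)$. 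The analogous computation for the $l$-th relaxation mechanism, with $(\mu,\pi)$ replaced by $(\tau_\sh\mu,\tau_\pr\pi)$, delivers the companion rule $3\tau_\pr\pi-4\tau_\sh\mu \mapsto 2(\tau_\pr\pi-\tau_\sh\mu)$.

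From there, I would substitute the 2D versions of $S_0$ and $S_l$ back into \eqref{sum}, re-use \eqref{eq:aux_FWI_ad3}--\eqref{eq:aux_FWI_ad4} (which are dimension-independent since they come from differentiating \eqref{V_p2}) to express $\widetilde{\mu},\widetilde{\pi},\widehat{\mu},\widehat{\pi}$ in terms of the physical perturbations, and group the resulting terms by the five components of $\widehat{\vec p}$. Inspecting the 3D formulas shows that the $\widehat{\rho}$-coefficient and the quantity $\vec\Sigma^\tau_{\sh,2}$ do not contain the denominators $3\pi-4\mu$ or $3\tau_\pr\pi-4\tau_\sh\mu$; they therefore pass unchanged to 2D. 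Only $\vec\Sigma^v$, $\vec\Sigma^\tau_{\sh,1}$, and $\vec\Sigma^\tau_\pr$ are affected, and applying the substitution rule above immediately produces the expressions claimed. The main obstacle is the meticulous sign- and factor-tracking through the four-term expansion of each $S_l$; no new conceptual ingredient is needed, and the 3D cancellation pattern that yields the compact final form is preserved word for word in 2D.
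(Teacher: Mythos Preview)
Your proposal is correct and follows essentially the same approach as the paper's own proof: both isolate the single dimension-dependent computation \eqref{eq:aux_FWI_ad2}, redo it with the 2D inverse $\widetilde{C}(m,p)$ and $\trace(\vec I)=2$, and arrive at the identical simplified expression for $C(\widetilde{\mu},\widetilde{\pi})\vec\varepsilon(\vec v):\widetilde{C}(\mu,\pi)\vec\varphi_0$. Your additional observation that the passage from 3D to 2D amounts to the uniform substitution $3\pi-4\mu\mapsto 2(\pi-\mu)$ (and its $\tau$-weighted analogue) is a nice way to see at a glance why exactly $\vec\Sigma^v$, $\vec\Sigma^\tau_{\sh,1}$, and $\vec\Sigma^\tau_\pr$ change while the remaining quantities do not.
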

\begin{proof}
The only difference to the 3D proof concerns the computation of, compare \eqref{eq:aux_FWI_ad2},
\begin{align*}
&C(\widetilde{\mu},\widetilde{\pi}) \vec\varepsilon(\vec v):\widetilde{C}(\mu,\pi) {\vec \varphi}_0\\
&\qquad\quad= \big( 2\widetilde{\mu}\,  \vec\varepsilon(\vec v) + (\widetilde{\pi}-2\widetilde{\mu} )\Div\V\, \vec I\big):
\Big( \frac{1}{2\mu}\,{\vec \varphi}_0 + \frac{2\mu-\pi}{4\mu(\pi-\mu)}\,\trace({\vec \varphi}_0)\vec I\Big)\\
&\qquad\quad= \widetilde{\mu}\Big(\frac{1}{\mu } \vec\varepsilon(\vec v):{\vec \varphi}_0
- \frac{\pi}{2\mu(\pi-\mu)}\,\Div\V\, \trace({\vec \varphi}_0)\Big)+\frac{\widetilde{\pi}}{2(\pi-\mu)} 
\,\Div\V\, \trace({\vec \varphi}_0).
\end{align*}
\end{proof}
\begin{theorem}[2D version of Theorem~\ref{FWI''_adjoint1}]~\\ \label{2D_FWI''_adjoint1}
The expression for $\vec p^*_1$ in Theorem~{\rm \ref{FWI''_adjoint1}} remains correct for the {\rm 2D} case when the {\rm 2D} versions  of  $\vec\Sigma^v$, $\vec\Sigma^\tau_{\sh,1}$, 
 and $\vec\Sigma^\tau_{\pr}$ from the above theorem are taken (with $\vec\eta_l$ in place of $\vec\varphi_l$).
 In the expression for $\vec p^*_2$  we need to redefine the following quantities:
 \begin{align*}
\vec\Gamma^\rho_2 &= \frac{\mu\widetilde{\pi}-\widetilde{\mu}\pi}{2\mu(\pi-\mu)}\,\vec\varphi_0
+\frac{\tau_\sh\mu\widehat{\pi}-\tau_\pr\widehat{\mu}\pi}{\tau_\sh\mu(\tau_\pr\pi-\tau_\sh\mu)}\,\vec\Sigma_\varphi,
\\[1mm]
\vec\Gamma^v_{\sh,1}&= \frac{\pi}{2(\pi-\mu)} \,\vec\varphi_0+\frac{\tau_\pr\pi}{2(\tau_\pr\pi-\tau_\sh\mu)} \,\vec\Sigma_\varphi,
 \\[1mm]
\vec\Gamma^v_{\sh,2}&= \Big(\frac{\widehat{\rho}}{\rho}\,\frac{\pi}{2(\pi-\mu)}-\mathrm{K}_\mu\Big)\vec\varphi_0+ \Big(\frac{\widehat{\rho}}{\rho}\,\frac{\tau_\pr\pi}
{2(\tau_\pr\pi-\tau_\sh\mu)}-\mathrm{K}_{\mu,\tau}\Big)\vec\Sigma_\varphi,
 \\[1mm]
\vec\Gamma^\tau_{\sh,1}&= -\alpha\frac{\pi}{2(\pi-\mu)} \,\vec\varphi_0+\frac{\tau_\pr\pi}{2\tau_\sh(\tau_\pr\pi-\tau_\sh\mu)} \,\vec\Sigma_\varphi,
\\[1mm]
\vec\Gamma^\tau_{\sh,2}&= -\alpha\Big(\frac{\widehat{\rho}}{\rho}\,\frac{\pi}{2(\pi-\mu)}-\mathrm{K}_\mu\Big)\vec\varphi_0
+ \Big(\frac{\widehat{\rho}}{\rho}\,\frac{\tau_\pr\pi}
{2\tau_\sh(\tau_\pr\pi-\tau_\sh\mu)}-\frac{\mathrm{K}_{\mu,\tau}}{\tau_\sh}\Big)\vec\Sigma_\varphi,
\\[1mm]
\vec\Gamma^v_{\pr,1}&= \frac{1}{2(\pi-\mu)} \,\vec\varphi_0+\frac{\tau_\pr}{2(\tau_\pr\pi-\tau_\sh\mu)} \,\vec\Sigma_\varphi,
\\[1mm]
\vec\Gamma^v_{\pr,2}&= \Big(\frac{\widehat{\rho}}{\rho}\,\frac{1}{2(\pi-\mu)}+\mathrm{K}_\pi\Big)\vec\varphi_0+\tau_\pr\Big(\frac{\widehat{\rho}}{\rho}\,\frac{1}{3\tau_\pr\pi-4\tau_\sh\mu}+\mathrm{K}_{\pi,\tau}
\Big)\vec\Sigma_\varphi,
\\[1mm]
\vec\Gamma^\tau_{\pr,1}&= \frac{\alpha}{2(\pi-\mu)} \,\vec\varphi_0-\frac{1}{2(\tau_\pr\pi-\tau_\sh\mu)} \,\vec\Sigma_\varphi,
\\[1mm]
\vec\Gamma^\tau_{\pr,2}&= \alpha\Big(\frac{\widehat{\rho}}{\rho}\,\frac{1}{2(\pi-\mu)}+\mathrm{K}_\pi\Big)\vec\varphi_0
+\Big(\frac{\widehat{\rho}}{\rho}\,\frac{1}{2(\tau_\pr\pi-\tau_\sh\mu)}+\mathrm{K}_{\pi,\tau}
\Big)\vec\Sigma_\varphi.
\end{align*}
where
\begin{gather*}
\mathrm{K}_\mu = \frac{2\pi\mu\widetilde{\mu}-\widetilde{\mu}\pi^2-\widetilde{\pi}\mu^2}{2\mu(\pi-\mu)^2},\quad 
\mathrm{K}_{\mu,\tau}=\frac{2\tau_\pr\pi\tau_\sh\mu\widehat{\mu}-\widehat{\mu}\tau_\pr^2\pi^2-\widehat{\pi}\tau_\sh^2\mu^2}{2\tau_\sh\mu(\tau_\pr\pi-\tau_\sh\mu)^2},\\[1mm]
\mathrm{K}_\pi=\frac{\widetilde{\pi}-\widetilde{\mu}}{2(\pi-\mu)^2},\quad 
\mathrm{K}_{\pi,\tau}=\frac{\widehat{\pi}-\widehat{\mu}}{2(\tau_\pr\pi-\tau_\sh\mu)^2}.
\end{gather*}
\end{theorem}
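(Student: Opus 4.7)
The plan is to follow the proof of Theorem~\ref{FWI''_adjoint1} line by line and replace each intermediate identity involving $\widetilde{C}(m,p)$ or $\trace(\vec I)$ by its two--dimensional counterpart. The starting point is unchanged:  Theorem~\ref{th:adjoint2} gives
\eqnn{
 \big(F''(V(\vec p))[V'(\vec p)\widehat{\vec p},V'(\vec p)\,\cdot]^*\vec g\big)\widehat{\vec p}_2
 = \int_0^T\!\!\langle V'(\vec p)\widehat{\vec p}(u_2'+Qu_2),w\rangle_X\,\d t
 + \int_0^T\!\!\langle V'(\vec p)\widehat{\vec p}_2(u_1'+Qu_1),w\rangle_X\,\d t.
}
An integration by parts in time (using $A^*=-A$, $Q^*=Q$ and $QB=BQ$) converts the first integral into
$\int_0^T\!\langle V'(\vec p)\widehat{\vec p}_2(u'+Qu),z\rangle_X\,\d t$, which is in precisely the form treated in the proof of Theorem~\ref{th:FWI_adjoint}. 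Therefore the 2D version of Theorem~\ref{th:FWI_adjoint} proved just above applies verbatim with $\vec\eta_l$ in place of $\vec\varphi_l$, immediately producing $\vec p_1^*$ and justifying why only $\vec\Sigma^v$, $\vec\Sigma^\tau_{\sh,1}$ and $\vec\Sigma^\tau_{\pr}$ change there.

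For the second integral I would expand the bracket pointwise along the exact pattern of the 3D proof: insert \eqref{V'} to decompose $\langle V'(\vec p)\widehat{\vec p}_2(u_1'+Qu_1),w\rangle_X = \int_D(\widehat{\rho}_2\,\partial_t\vec v_1\cdot\vec w + S_0+\dots+S_L)\,\d x$; eliminate $\partial_t\Sig_{0,1}$ and $\partial_t\Sig_{l,1}+\Sig_{l,1}/\tsigl$ by \eqref{FWI_first2}--\eqref{FWI_first3}; then apply the 2D versions of the two key auxiliary identities
\eqnn{
\widetilde{C}(\mu,\pi)C(\widetilde{\mu},\widetilde{\pi})\vec\varepsilon(\V) = \tfrac{\widetilde{\mu}}{\mu}\vec\varepsilon(\V) + \tfrac{\mu\widetilde{\pi}-\widetilde{\mu}\pi}{2\mu(\pi-\mu)}\Div\V\,\vec I,
\quad
C(\widetilde{\mu},\widetilde{\pi})\widetilde{C}(\mu,\pi)\vec\varphi_0 = \tfrac{\widetilde{\mu}}{\mu}\vec\varphi_0 + \tfrac{\mu\widetilde{\pi}-\widetilde{\mu}\pi}{2\mu(\pi-\mu)}\trace(\vec\varphi_0)\vec I,
}
together with their Frobenius contraction evaluated using $\vec I:\vec I=2$. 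Both identities follow from the formula $\widetilde{C}(m,p)\vec M=\tfrac{1}{2m}\vec M+\tfrac{2m-p}{4m(p-m)}\trace(\vec M)\vec I$ by the same kind of collapse of the trace terms already exhibited in the 2D proof of Theorem~\ref{th:FWI_adjoint}. Parallel versions hold, of course, for the memory mechanisms with $(\tau_\sh\mu,\tau_\pr\pi)$ replacing $(\mu,\pi)$ and denominators $2(\tau_\pr\pi-\tau_\sh\mu)$.

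With these building blocks in hand, I would rewrite $S_0$ and $S_l$, noting that the purely $\vec\varepsilon(\vec v_1):\vec\varphi_l$ coefficients (and hence $\vec\Gamma^\rho_1$, $\vec\Gamma^\tau_{\sh,0}$) are untouched because they arise from the dimension--free part $\tfrac{\widetilde{\mu}}{\mu}\vec\varepsilon$ of the above identities. Only the terms carrying a factor $\Div\V\trace(\vec\varphi_l)$ or $\Div\V_1\trace(\vec\varphi_l)$ pick up the new denominators $2(\pi-\mu)$ and $2(\tau_\pr\pi-\tau_\sh\mu)$, hence the list of quantities to redefine. Finally I would substitute $\widetilde{\mu}_2,\widetilde{\pi}_2,\widehat{\mu}_2,\widehat{\pi}_2$ from \eqref{eq:aux_FWI_ad3}--\eqref{eq:aux_FWI_ad4} and group by the components of $\widehat{\vec p}_2$; the coefficient of each component is then read off as the claimed $\vec\Gamma$--quantity.

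The main obstacle is the algebraic bookkeeping of the last step: the quadratic interaction term $\widetilde{C}(\mu,\pi)C(\widetilde{\mu}_1,\widetilde{\pi}_1)\vec\varepsilon(\V):C(\widetilde{\mu}_2,\widetilde{\pi}_2)\widetilde{C}(\mu,\pi)\vec\varphi_0$ in 2D combines three contributions (two cross terms plus $\vec I:\vec I=2$ from the trace-trace piece) whose numerator, after simplification of $(\pi-\mu)$, produces exactly the condensed abbreviations $\mathrm{K}_\mu$, $\mathrm{K}_{\mu,\tau}$, $\mathrm{K}_\pi$, $\mathrm{K}_{\pi,\tau}$. Verifying these collapses—and the corresponding ones for the memory variables—is the only lengthy but otherwise mechanical part of the argument. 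Once carried out, the formula for $\vec p_2^*$ follows.
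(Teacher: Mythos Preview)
Your proposal is correct and follows essentially the same approach as the paper: the paper's proof is extremely terse, merely recording the two 2D identities
$\widetilde{C}(\mu,\pi)C(\widetilde{\mu}_1,\widetilde{\pi}_1)\vec\varepsilon(\V)$ and
$C(\widetilde{\mu}_2,\widetilde{\pi}_2)\widetilde{C}(\mu,\pi)\vec\varphi_0$ together with their Frobenius contraction, and then deferring to the 3D proof of Theorem~\ref{FWI''_adjoint1} for the remaining steps. Your write-up spells out exactly that structure (the split into $\vec p_1^*$ via the 2D version of Theorem~\ref{th:FWI_adjoint}, and $\vec p_2^*$ via the $S_l$-expansion with the 2D denominators $2(\pi-\mu)$, $2(\tau_\pr\pi-\tau_\sh\mu)$ and $\vec I:\vec I=2$), so there is no substantive difference.
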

\begin{proof}
We have
$$
\widetilde{C}(\mu,\pi)C(\widetilde{\mu}_1,\widetilde{\pi}_1)\vec\varepsilon(\V) = \frac{\widetilde{\mu}_1}{\mu} 
\vec\varepsilon(\V)  +\frac{\mu \widetilde{\pi}_1-\widetilde{\mu}_1\pi}{2\mu(\pi-\mu)}\,\Div\V\, 
\vec I
$$
and 
$$
C(\widetilde{\mu}_2,\widetilde{\pi}_2)\widetilde{C}(\mu,\pi)\vec \varphi_0 =\frac{\widetilde{\mu}_2}{\mu} 
\vec\varphi_0  +\frac{\mu \widetilde{\pi}_2-\widetilde{\mu}_2\pi}{2\mu(\pi-\mu)}\,\trace(\vec\varphi_0)\,\vec I
$$
so that
\begin{multline*}
\widetilde{C}(\mu,\pi) C(\widetilde{\mu}_1,\widetilde{\pi}_1)\vec\varepsilon(\vec v) : C(\widetilde{\mu}_2,\widetilde{\pi}_2)
\widetilde{C}(\mu,\pi)\vec\varphi_0=
\frac{\widetilde{\mu}_1\widetilde{\mu}_2}{\mu^2}\, \vec\varepsilon(\V): \vec\varphi_0 \\
+ \frac{\widetilde{\mu}_2(2\pi\mu\widetilde{\mu}_1-\widetilde{\mu}_1\pi^2-\widetilde{\pi}_1\mu^2)+\widetilde{\pi}_2
\mu^2(\widetilde{\pi}_1-\widetilde{\mu}_1)}{2\mu^2(\pi-\mu)^2}
\,\Div\V\, \trace(\vec\varphi_0) .
\end{multline*}
For the next steps see the proof of Theorem~\ref{FWI''_adjoint1}.
\end{proof}
~\\
\begin{theorem}[2D version of Theorem~\ref{FWI''_adjoint2}]~\\
Theorem~{\rm \ref{FWI''_adjoint2}} remains correct for the {\rm 2D} case when the definitions of the $\Upsilon$'s are replaced by
\begin{align*}
\vec\Upsilon^\rho_1 &= \Big(\frac{\widehat{\rho}}{\rho}+\frac{\widetilde{\mu}}{\mu}\Big)\vec\varphi_0
+ \Big(\frac{\widehat{\rho}}{\rho}+\frac{\widehat{\mu}}{\tau_\sh\mu}\Big)\vec\Sigma,\quad
\vec\Upsilon^\rho_2 = \frac{\widetilde{\pi}\mu-{\widetilde{\mu}}\pi}{2\mu(\pi-\mu)}\,\vec\varphi_0
+\frac{\widehat{\pi}\tau_\sh\mu-{\widehat{\mu}}\tau_\pr\pi}{2\tau_\sh\mu(\tau_\pr\pi-\tau_\sh\mu)}\,\vec\Sigma,\\[1mm]
\vec\Upsilon^v_{\sh,1}&= \Big(\frac{\widehat{\rho}}{\rho}+\frac{2\widetilde{\mu}}{\mu}\Big) \vec\varphi_0
+  \Big(\frac{\widehat{\rho}}{\rho}+\frac{2\widehat{\mu}}{\tau_\sh\mu}\Big)\vec\Sigma,\quad
\vec\Upsilon^v_{\sh,2}= \mathrm{K}_{\sh,\vec\varphi}\,\vec\varphi_0+\mathrm{K}_{\sh,\vec\Sigma}\,\vec\Sigma,\\[1mm]
\vec\Upsilon^\tau_{\sh,1}&= -\alpha\Big(\frac{\widehat{\rho}}{\rho}+\frac{2\widetilde{\mu}}{\mu}\Big) \vec\varphi_0
+  \Big(\frac{\widehat{\rho}}{\tau_\sh\rho}+\frac{2\widehat{\mu}}{\tau_\sh^2\mu}\Big)\vec\Sigma, \quad
\vec\Upsilon^\tau_{\sh,2}= -\alpha \mathrm{K}_{\sh,\vec\varphi}\, \vec\varphi_0+ \mathrm{K}_{\sh,\vec\Sigma}\, \vec\Sigma/\tau_\sh,\\[1mm]
\vec\Upsilon^v_{\pr}&=   \mathrm{K}_{\pr,\vec\varphi}\,\vec\varphi_0+\tau_\pr \, \mathrm{K}_{\pr,\vec\Sigma}\,\vec\Sigma,\quad
\vec\Upsilon^\tau_{\pr}= -\alpha\,  \mathrm{K}_{\pr,\vec\varphi}\,\vec\varphi_0+ \mathrm{K}_{\pr,\vec\Sigma}\,\vec\Sigma,
\end{align*}
where
\begin{align*}
\mathrm{K}_{\sh,\vec\varphi} &= \frac{2\pi\mu\widetilde{\mu}-\widetilde{\mu}\pi^2-\widetilde{\pi}\mu^2}{\mu(\pi-\mu)^2}-\frac{\widehat{\rho}}{\rho}\,\frac{\pi}{2(\pi-\mu)},\\[1mm]
 \mathrm{K}_{\sh,\vec\Sigma}& = \frac{2\tau_\pr\pi\tau_\sh\mu\widehat{\mu}-\widehat{\mu}\tau_\pr^2\pi^2-\widehat{\pi}\tau_\sh^2\mu^2}{\tau_\sh\mu(\tau_\pr\pi-\tau_\sh\mu)^2}-\frac{\widehat{\rho}}{\rho}\,\frac{\tau_\pr\pi}{2(\tau_\pr\pi-\tau_\sh\mu)},\\[1mm]
 \mathrm{K}_{\pr,\vec\varphi} &= \frac{\widehat{\rho}}{\rho}\,\frac{1}{2(\pi-\mu)}+
\frac{\widetilde{\pi}-\widetilde{\mu}}{(\pi-\mu)^2},\\[1mm]
 \mathrm{K}_{\pr,\vec\Sigma} &= \frac{\widehat{\rho}}{\rho}\,\frac{1}{2(\tau_\pr\pi-\tau_\sh\mu)}+
\frac{\widehat{\pi}-\widehat{\mu}}{(\tau_\pr\pi-\tau_\sh\mu)^2}.
\end{align*}
\end{theorem}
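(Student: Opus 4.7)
The plan is to mirror the proof of Theorem~\ref{FWI''_adjoint2} in three dimensions step by step, replacing the 3D expression for $\widetilde{C}$ by its 2D counterpart and adjusting $\trace(\vec I)=2$ throughout. The starting point is the identity
$$
\big(F'(V(\vec p))V''(\vec p)[\widehat{\vec p}_1,\cdot]^*\vec g\big)\widehat{\vec p}_2 =
\int_0^T\! \big\langle V''(\vec p)[\widehat{\vec p}_1,\widehat{\vec p}_2]\big(u'(t)+Qu(t)\big),w(t)\big\rangle_X\,\d t,
$$
which was already used in the 3D proof and carries over without change since \eqref{V''} and \eqref{C''} are dimension-independent. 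Splitting the integrand into $S_0+S_1+\dots+S_L$ via \eqref{V''}, I can reuse the decomposition of $S_0$ and $S_l$ verbatim; only the expressions involving $\widetilde{C}$ and its first two derivatives must be recomputed in 2D.

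Next, I would expand the four building blocks that appear inside $S_0$ (and analogously inside $S_l$):
$\widetilde{C}(\mu,\pi)\partial_t\Sig_0:\vec\varphi_0$, the two mixed terms containing $\widetilde{C}'(\mu,\pi)\bigl[\begin{smallmatrix}\widetilde{\mu}_i\\ \widetilde{\pi}_i\end{smallmatrix}\bigr]\partial_t\Sig_0:\vec\varphi_0$, and the pure second-derivative term
$\widetilde{C}''(\mu,\pi)\bigl[\begin{smallmatrix}\widetilde{\mu}_1\\ \widetilde{\pi}_1\end{smallmatrix}\bigr]\bigl[\begin{smallmatrix}\widetilde{\mu}_2\\ \widetilde{\pi}_2\end{smallmatrix}\bigr]\partial_t\Sig_0:\vec\varphi_0$. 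Using \eqref{C2:elawave2_trans} together with the 2D formula
$$
\widetilde{C}(m,p)\vec M=\frac{1}{2m}\,\vec M+\frac{2m-p}{4m(p-m)}\,\trace(\vec M)\vec I,
$$
the first term still equals $\vec\varepsilon(\V):\vec\varphi_0$, while the first-derivative term reproduces the 2D correction already computed in the proof of the 2D version of Theorem~\ref{th:FWI_adjoint}. For the $\widetilde{C}''$ term, I would invoke the computation carried out in the proof of the 2D version of Theorem~\ref{FWI''_adjoint1}, which yields
$$
\widetilde{C}(\mu,\pi)C(\widetilde{\mu}_1,\widetilde{\pi}_1)\vec\varepsilon(\V):C(\widetilde{\mu}_2,\widetilde{\pi}_2)\widetilde{C}(\mu,\pi)\vec\varphi_0=\frac{\widetilde{\mu}_1\widetilde{\mu}_2}{\mu^2}\vec\varepsilon(\V):\vec\varphi_0+\frac{\widetilde{\mu}_2\,\mathrm{num}_1+\widetilde{\pi}_2\mu^2(\widetilde{\pi}_1-\widetilde{\mu}_1)}{2\mu^2(\pi-\mu)^2}\Div\V\,\trace(\vec\varphi_0),
$$
with $\mathrm{num}_1=2\pi\mu\widetilde{\mu}_1-\widetilde{\mu}_1\pi^2-\widetilde{\pi}_1\mu^2$; by the symmetry of the second derivative the same identity with indices swapped appears too, producing the factor~$2$ that made the 3D abbreviations $\mathrm{K}_\mu$, $\mathrm{K}_{\pi}$, etc.\ collapse in the 3D proof and that here defines the new 2D abbreviations $\mathrm{K}_{\sh,\vec\varphi}$, $\mathrm{K}_{\pr,\vec\varphi}$. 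The terms for $S_l$ are obtained from $S_0$ by the substitutions $(\mu,\pi,\widetilde{\mu}_i,\widetilde{\pi}_i,\vec\varphi_0)\leftrightarrow(\tau_\sh\mu,\tau_\pr\pi,\widehat{\mu}_i,\widehat{\pi}_i,\vec\varphi_l)$ and using \eqref{C2:elawave3_trans}, hence the $\tau_\sh,\tau_\pr$ counterparts $\mathrm{K}_{\sh,\vec\Sigma}$, $\mathrm{K}_{\pr,\vec\Sigma}$ appear.

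Finally, I would sum $S_0+\cdots+S_L$, replace $\widetilde{\mu}_2,\widetilde{\pi}_2,\widehat{\mu}_2,\widehat{\pi}_2$ by the explicit expressions \eqref{eq:aux_FWI_ad3}--\eqref{eq:aux_FWI_ad4} in terms of the components of $\widehat{\vec p}_2$, and read off the coefficient of each component of $\widehat{\vec p}_2$ as the corresponding entry of $F'(V(\vec p))V''(\vec p)[\widehat{\vec p},\cdot]^*\vec g$. Matching these coefficients with the integrands in the statement gives exactly the formulas for $\vec\Upsilon^\rho_1,\vec\Upsilon^\rho_2,\vec\Upsilon^v_{\sh,1},\vec\Upsilon^v_{\sh,2},\vec\Upsilon^\tau_{\sh,1},\vec\Upsilon^\tau_{\sh,2},\vec\Upsilon^v_{\pr},\vec\Upsilon^\tau_{\pr}$ announced in the theorem. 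The main obstacle is simply bookkeeping: the 2D version of $\widetilde{C}''$ lacks the factors of $3$ and $4$ that clean up the 3D expressions, so one must be meticulous with signs and denominators of the form $\pi-\mu$ and $\tau_\pr\pi-\tau_\sh\mu$ when collecting the new abbreviations $\mathrm{K}_{\sh,*}$ and $\mathrm{K}_{\pr,*}$; no new conceptual ingredient is needed beyond the 3D argument.
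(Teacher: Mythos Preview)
Your proposal is correct and follows essentially the same route as the paper: both start from the identity \eqref{eq:aux_FWI_ad}, use the same decomposition into $S_0+S_1+\dots+S_L$, recompute the building blocks involving $\widetilde{C}$, $\widetilde{C}'$, and $\widetilde{C}''$ by invoking the 2D formulas already established in the proofs of the two preceding 2D theorems, and then finish by substituting \eqref{eq:aux_FWI_ad3}--\eqref{eq:aux_FWI_ad4} and grouping by the components of~$\widehat{\vec p}_2$. The only slip is terminological---the abbreviations $\mathrm{K}_\mu,\mathrm{K}_\pi$ you mention do not occur in the 3D proof but in Theorem~\ref{2D_FWI''_adjoint1}---which does not affect the argument.
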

\begin{proof}
Let $S_0$ and $S_l$, $l=1,\dotsc,L$, be defined as in the proof of Theorem~\ref{FWI''_adjoint2}. Then, taking into account 
the formulas provided in the previous two proofs, 
\begin{align*}
S_0&= \widehat{\rho}_2\Bigg(\Big(\frac{\widehat{\rho}_1}{\rho^2}+\frac{\widetilde{\mu}_1}{\rho\mu}\Big) \vec\varepsilon(\V): \vec\varphi_0 +\frac{1}{\rho}\,\frac{\widetilde{\pi}_1\mu-\widetilde{\mu}_1\pi}{2\mu(\pi-\mu)} \,\Div\V\, \trace(\vec\varphi_0) \Bigg)\\
&\quad+\widetilde{\mu}_2 \Bigg(\Big(\frac{\widehat{\rho}_1}{\rho\mu}+\frac{2\widetilde{\mu}_1}{\mu^2}\Big) \vec\varepsilon(\V): \vec\varphi_0 +\Big(\frac{2\pi\mu\widetilde{\mu}_1-\widetilde{\mu}_1\pi^2-\widetilde{\pi}_1\mu^2}{\mu^2(\pi-\mu)^2}-\frac{\widehat{\rho}_1}{\rho}\,\frac{\pi}{2\mu(\pi-\mu)}\Big) \Div\V\, \trace(\vec\varphi_0) \Bigg)\\
&\quad+\widetilde{\pi}_2\Bigg(\frac{\widehat{\rho}_1}{\rho}\,\frac{1}{2(\pi-\mu)}+
\frac{\widetilde{\pi}_1-\widetilde{\mu}_1}{(\pi-\mu)^2}\Bigg)\Div\V\, \trace(\vec\varphi_0)
\end{align*}
and
\begin{align*}
S_l&= \widehat{\rho}_2\Bigg(\Big(\frac{\widehat{\rho}_1}{\rho^2}+\frac{\widehat{\mu}_1}{\rho\tau_\sh\mu}\Big) \vec\varepsilon(\V): \vec\varphi_l +\frac{1}{\rho}\,\frac{\widehat{\pi}_1\tau_\sh\mu-\widehat{\mu}_1\tau_\pr\pi}{2\tau_\sh\mu(\tau_\pr\pi-\tau_\sh\mu)} \,\Div\V\, \trace(\vec\varphi_l) \Bigg)\\
&\quad+\widehat{\mu}_2 \Bigg(\Big(\frac{\widehat{\rho}_1}{\rho\tau_\sh\mu}+\frac{2\widehat{\mu}_1}{\tau_\sh^2\mu^2}\Big) \vec\varepsilon(\V): \vec\varphi_l\\
&\qquad\qquad\quad +\Big(\frac{2\tau_\pr\pi\tau_\sh\mu\widehat{\mu}_1-\widehat{\mu}_1\tau_\pr^2\pi^2-\widehat{\pi}_1\tau_\sh^2\mu^2}{\tau_\sh^2\mu^2(\tau_\pr\pi-\tau_\sh\mu)^2}-\frac{\widehat{\rho}_1}{\rho}\,\frac{\tau_\pr\pi}{2\tau_\sh\mu(\tau_\pr\pi-\tau_\sh\mu)}\Big) \Div\V\, \trace(\vec\varphi_l) \Bigg)\\
&\quad+\widehat{\pi}_2\Bigg(\frac{\widehat{\rho}_1}{\rho}\,\frac{1}{2(\tau_\pr\pi-\tau_\sh\mu)}+
\frac{\widehat{\pi}_1-\widehat{\mu}_1}{(\tau_\pr\pi-\tau_\sh\mu)^2}\Bigg)\Div\V\, \trace(\vec\varphi_l).
\end{align*}
The next steps are as in the proof of Theorem~\ref{FWI''_adjoint2}.
\end{proof}

\end{document}